\newtheorem{lemma}{Lemma}[section]
\newtheorem{theorem}[lemma]{Theorem}
\newtheorem{prop}[lemma]{Proposition}
\newtheorem{rem}[lemma]{Remark}
\newtheorem{remark}[lemma]{Remark}
\begin{document}
\title[]{Evolution of polygonal lines by the binormal flow}
\author[V. Banica]{Valeria Banica}
\address[V. Banica]{Laboratoire Jacques-Louis Lions (UMR 7598)
B.C. 187, 4 place Jussieu
75005 Paris, France, Valeria.Banica@ljll.math.upmc.fr} 

\author[L. Vega]{Luis Vega}
\address[L. Vega]{Departamento de Matem\'aticas, Universidad del Pais Vasco, Aptdo. 644, 48080 Bilbao, Spain, luis.vega@ehu.es} 
\date\today
\maketitle
\begin{abstract} The aim of this paper is threefold. First we display solutions of the cubic nonlinear Schr\"odinger equation on $\mathbb R$ in link with initial data a sum of Dirac masses. Secondly we show a Talbot effect for the same equation. Finally we prove the existence of a unique solution of the binormal flow with datum a polygonal line. This equation is used as a model for the vortex filaments dynamics in 3-D fluids and superfluids. We also construct solutions of the binormal flow that present an intermittency phenomena. Finally, the solution we construct for the binormal flow is continued for negative times, yielding a geometric way to approach the continuation after blow-up for the 1-D cubic nonlinear Schr\"odinger equation. 
\end{abstract}

\section{Introduction}
We first present the binormal flow framework and the obtained results. Then in \S\ref{ssectNLS} we describe the 1-D cubic nonlinear Schr\"odinger equation results. 
\subsection{Evolution of polygonal lines through the binormal flow and intermittency}\label{chBF} 
Vortex filaments in 3-D fluids appear when vorticity is large and concentrated in a thin tube around a curve in $\mathbb R^3$. The binormal (curvature) flow, that we refer hereafter as BF, is the classical model for one vortex filament dynamics. It was derived by Da Rios 1906 in his PhD advised by Levi-Civita by using a truncated Biot-Savart law and a renormalization in time (\cite{DaR}).  The evolution of a $\mathbb{R}^3$-curve $\chi(t)$ parametrized by arclength $x$   by the binormal flow is 
\begin{equation}\label{bf}
\chi_t=\chi_x\wedge\chi_{xx}. 
\end{equation}
Keeping in mind the Frenet's system for the frames of 3-D curves composed by tangent, normal, and binormal vectors  $(T,n,b)$ 
$$\left(\begin{array}{c}
T\\n\\b
\end{array}\right)_x=
\left(\begin{array}{ccc}
0 & c & 0 \\ -c & 0 & \tau \\ 0 &  -\tau & 0 
\end{array}\right)
\left(\begin{array}{c}
T\\n\\b
\end{array}\right),$$
where $c,\tau$ are the curvature and torsion, the binormal flow can be rewritten as
$$\chi_t=c\,b.$$
BF was also derived as formal asymptotics in \cite{ArHa}, and in \cite{CaTi} by using  the technique of matched asymptotics in the Navier-Stokes equations (i.e. to balance the cross-section of the tube with the Reynolds number).  In the recent paper \cite{JeSe}, and still under some hypothesis on the persistence of concentration of vorticity in the tube, BF is rigorously derived; moreover the considered curves are not necessarily smooth. This is based on the existence of a correspondence between the two Hamilton-Poisson structures that give rise to Euler and to  BF. 

Existence results were given for curves with curvature and torsion in Sobolev spaces of high order (\cite{Ha},\cite{NiTa},\cite{FuMi},\cite{Ko}), and more generally existence results for currents in the framework of a weak formulation of the binormal flow (\cite{JeSm2}). Recently, the Cauchy problem was shown to be well-posed for curves with a corner and curvature in weighted space (\cite{BV4}). 

An important feature of BF is that the tangent vector of a solution $\chi(t)$ solves the Schr\"odinger map onto $\mathbb S^2$:
$$T_t=T\wedge T_{xx}.$$ Furthermore, Hasimoto remarked in \cite{Ha} that the function, that he calls the filament function,
$u(t,x)=c(t,x)e^{i\int_0^x\tau(t,s)ds}$, satisfies a focusing 1-D cubic nonlinear Schr\"odinger equation (NLS)
\footnote{The defocusing 1-D cubic Schr\"odinger equation is achieved if the target of the Schr\"odinger map equation is the hyperbolic plane $\mathbb H^2$ instead of the sphere $\mathbb S^2$.}. 
Hasimoto's transform can be viewed as an inverse Madelung transform sending Gross-Pitaesvskii equation to compressible Euler equation with quantum pressure. It is known that in order to avoid issues related to vanishing curvature, Bishop parallel frames (\cite{Bi},\cite{Ko}) can be used as explained in \S\ref{sectconstr}.

Several examples of evolutions of curves through the binormal flow were given finding first particular solutions of the 1-D cubic NLS and then solving the corresponding Frenet equations.  Some of these example are consistent at the qualitative level with classical vortex filament dynamics as the line, the ring, the helix and travelling wave type vortices. A special case are the self-similar solutions of the binormal flow. They are constructed from the solutions 
 \begin{equation}\label{sssol}u_\alpha(t,x)=\alpha\frac{e^{i\frac{x^2}{4t}}}{\sqrt{4\pi it}}=\alpha e^{it\Delta}\delta_0(x)
 \end{equation}
of the 1-D cubic NLS  equation, renormalized in a sense specified in \S\ref{ssectNLS}, with a Dirac mass $\alpha\delta_0$ at initial time. These BF solutions are of the type $\chi(t,x)=\sqrt{t}G(\frac x{\sqrt{t}})$, and form a a 1-parameter family $\{\chi_{\alpha}, \alpha\geq 0\}$, with $\chi_\alpha(t)$  characterized by its curvature $c_\alpha(t,x)=\frac \alpha{\sqrt{t}}$ and its torsion $\tau_\alpha(t,x)=\frac x{2t}$. These solutions were known and used for quite a while in the 80's (\cite{LD},\cite{LRT},\cite{Bu},\cite{Sc}). The existence of a trace at time $t=0$ was proved rigorously in \cite{GRV}, and in particular it was shown that $\chi_\alpha(0)$ is a broken line with one corner having an angle $\theta$ satisfying 
 \begin{equation}\label{angless}\sin\left(\frac{\theta}{2}\right)=e^{-\pi\frac{\alpha^2}{2}}.\end{equation}
In particular the Dirac mass at the NLS level corresponds to the formation of a corner on the curve, but the trace $\alpha\delta_0$ of the filament function is not the filament function $\theta\delta_0$ of $\chi_\alpha(0)$.  This turns out to have relevant consequences regarding the lack of continuity of some norms at the time when the corner is created. In \cite{BVnote} it is proved the $\| \widehat{T_x}(\,\cdot, t)\|_\infty$ is discontinuous at that time. The same proof works if instead of this norm it is used the following one
$$\sup_j\int_{4\pi j}^{4\pi (j+1)}| \widehat{T_x}( x, t)|^2\,dx,$$ 
that fits better within the framework of Theorem \ref{thdiracs}, because due to the Frenet equations $T_x$ it is at the same level of regularity as the corresponding filament functions that solve NLS.

We shall now turn our attention precisely to the evolution of curves that can generate corners in finite time. The case of the formation and instantaneous disappearance of one corner is now well understood thanks to the characterization of the family the self-similar solutions, and the study in \cite{BV4} of the evolution of non-closed curves with one corner and with curvature in weighted $L^2$ based spaces.  On the other hand, a planar regular polygon with $M$ sides is expected to evolve through the binormal flow to skew polygons with $Mq$ sides at times  $t_{p,q}=\frac p{2\pi q}$ for odd $q$ , see the numerical simulations in \cite{GrDe},\cite{JeSm2}, and \cite{DHV} where the integration of the Frenet equations at the rational times $t_{p,q}$ is also done.

In the present paper we place ourselves in the framework of initial data being polygonal lines. The results presented are an important step forward to fill the gap between the case of one corner and the much more delicate issue of closed polygons.
 
 \begin{theorem}\label{brokenline}{\bf{(Evolution of polygonal lines through the binormal flow)}} Let $\chi_0$ be  an arclength parametrized polygonal line with corners located at $x\in\mathbb Z$, with the sequence of angles $\theta_n\in(0,\pi)$ such that the sequence defined by (cf. \eqref{angless})
 \begin{equation}\label{angle}\sqrt{-\frac {2}{\pi}\log\left(\sin\left(\frac{\theta_n}{2}\right)\right)},\end{equation}
belongs to $l^{2,3}$. Then there exists $\chi(t)$, smooth solution of the binormal flow \eqref{bf} on $t\neq0$ and solution of \eqref{bf} in the weak sense on $\mathbb R$, with 
$$|\chi(t,x)-\chi_0(x)|\leq C\sqrt{t},\quad \forall x\in\mathbb R, |t|\leq 1.$$
 \end{theorem}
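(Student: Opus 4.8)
The plan is to exploit the Hasimoto transform that links the binormal flow to the 1-D cubic NLS, reducing the geometric problem to a PDE problem about NLS solutions whose initial data is a sum of Dirac masses placed at the integers. Let me think about how this reduction works and where the difficulties lie.

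First, the key correspondence: a BF curve $\chi(t)$ with curvature $c$ and torsion $\tau$ gives a filament function $u = c\, e^{i\int_0^x \tau}$ solving focusing NLS. The self-similar example shows that a single corner with angle $\theta$ corresponds to a Dirac mass $\alpha\delta_0$ with $\sin(\theta/2) = e^{-\pi\alpha^2/2}$, i.e. $\alpha = \sqrt{-\frac{2}{\pi}\log\sin(\theta/2)}$. This is exactly the quantity in (\ref{angle}). So a polygonal line with corners at the integers $n\in\mathbb{Z}$ and angles $\theta_n$ should correspond, at the NLS level, to initial data $\sum_n \alpha_n \delta_n$ with $\alpha_n$ the quantity in (\ref{angle}). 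The hypothesis that $(\alpha_n) \in l^{2,3}$ (weighted $\ell^2$ with weight involving polynomial growth of order 3) is precisely the regularity/decay needed to make the NLS analysis of such data work — presumably this is where Theorem \ref{thdiracs} (the "diracs" theorem referenced) provides the existence of an NLS solution $u(t)$ with the correct self-similar singularity structure near each integer.

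Key steps:
\begin{itemize}
\item Set the problem at the NLS level. Use Theorem \ref{thdiracs} to produce a solution $u(t)$ of the (renormalized) 1-D cubic NLS with initial data $\sum_n \alpha_n\delta_n$. The renormalization and the $l^{2,3}$ condition should guarantee $u(t)$ is smooth for $t\neq 0$ and has the appropriate asymptotic behaviour near $t=0$ and near each lattice point, matching a superposition of self-similar profiles.
\item Reconstruct the frame and the curve. Given $u(t)$ for $t\neq 0$, integrate the associated parallel-frame (Bishop) system $\partial_x(T,e_1,e_2)$ with coefficients built from $u$ to recover a tangent vector $T(t,x)\in\mathbb{S}^2$ solving the Schr\"odinger map, then integrate $\chi_x = T$ to recover $\chi(t)$. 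The use of Bishop frames (mentioned in the excerpt) is essential to avoid the vanishing-curvature problem at the corners.
\item Control the $t\to 0$ limit. Show that as $t\to 0$ the reconstructed curve $\chi(t)$ converges to the prescribed polygonal line $\chi_0$, with the quantitative rate $|\chi(t,x)-\chi_0(x)|\le C\sqrt{t}$. The $\sqrt{t}$ scaling is dictated by the self-similar structure $\chi_\alpha(t,x)=\sqrt{t}\,G(x/\sqrt{t})$; near each corner the solution looks self-similar at scale $\sqrt{t}$, so the displacement from the limiting polygon is $O(\sqrt{t})$.
\item Verify the equations. Check that $\chi(t)$ solves BF classically for $t\neq 0$ (this is automatic from the NLS$\to$BF dictionary once the frame is reconstructed) and that it solves BF in the weak sense across $t=0$, using the uniform bound to pass to the limit in the weak formulation.
\end{itemize}

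The main obstacle will be the global-in-$x$ control of the superposition. Unlike the single-corner case, one must handle infinitely many corners simultaneously and show that their self-similar contributions superpose coherently: the NLS solution must be controlled uniformly along the whole real line, and the reconstructed frame must remain well-defined (the parallel transport composed along infinitely many lattice intervals must not degenerate). This is precisely where the weighted summability $(\alpha_n)\in l^{2,3}$ enters, controlling both the sizes of the individual corners and their interaction at the critical $\sqrt{t}$ scale; obtaining the \emph{uniform in $x$} estimate $|\chi(t,x)-\chi_0(x)|\le C\sqrt{t}$ — rather than a merely local one near a single corner — is the heart of the matter and the step I expect to require the most delicate analysis of the NLS solution's pointwise behaviour.
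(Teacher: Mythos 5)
Your overall strategy is the one the paper follows (Hasimoto reduction, Theorem \ref{thdiracs} for data $\sum_k\alpha_k\delta_k$, reconstruction of a parallel frame and of $\chi(t)$ for $t>0$, passage to the limit $t\to 0$), but there is a genuine gap in how you set up the NLS data and, consequently, in how you would identify the limit curve with $\chi_0$. You take $\alpha_n$ to be the \emph{real} number \eqref{angle}. A non-planar polygonal line in $\mathbb R^3$ is not determined by its corner locations and corner angles alone: one also needs the torsion angles (the relative orientation of consecutive osculating configurations). In the paper the $\alpha_k$ are \emph{complex}, with $|\alpha_k|$ given by \eqref{angle} but with arguments $Arg(\alpha_k)$ designed through the nontrivial relations \eqref{argcoef}, involving the phases $\phi_{|\alpha_k|}$ of the asymptotic normal vectors of the self-similar profiles and logarithmic corrections $\beta_n=(|\alpha_{x_n}|^2-|\alpha_{x_{n+1}}|^2)\log|x_n-x_{n+1}|$. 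With real coefficients your construction would produce \emph{a} curve with the right corner angles at the right places, but not in general the prescribed $\chi_0$; recovering the torsion occupies a substantial part of the paper (the modulated normal vectors $\tilde N(t,x)=e^{i\Phi(t,x)}N(t,x)$, their trace at $t=0$, and Lemmas \ref{Avect}--\ref{lemmatorsion}).

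Relatedly, you do not say how one proves that the limit $\chi(0)$ \emph{is} the prescribed polygon, which is the real heart of the matter (the uniform bound $|\chi(t,x)-\chi_0(x)|\le C\sqrt t$ follows easily from $|\chi_t|\le|u|\le C/\sqrt t$ once $\chi(0)=\chi_0$ is known). The paper's mechanism is: prove $T(t,x)$ has a trace $T(0,x)$ with a self-similar rate $\sqrt t/d(x,\mathbb Z)$; show $T(0,\cdot)$ is constant on each interval $(n,n+1)$ by oscillatory integration by parts in $x$; then evaluate the frame along self-similar paths $x=k+y\sqrt{t_n}$ and use the rigidity/asymptotics of the self-similar solutions from \cite{GRV} (system \eqref{systselfs} and \eqref{asselfs}) to conclude that each corner of $\chi(0)$ is a rotated copy $\Theta_k(A^\pm_{|\alpha_k|})$ of the self-similar corner, hence has the angle dictated by \eqref{angless}. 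This external input --- the characterization of self-similar BF solutions --- is the key idea that your outline does not invoke, and without it the identification of the trace (angles \emph{and} torsion) does not go through. A minor further point: the hypothesis $l^{2,3}$ is not what Theorem \ref{thdiracs} needs (any $s>\tfrac12$ suffices there); the extra weight is consumed in the geometric estimates on $T$ and $\tilde N$, e.g.\ the $l^{2,\frac32^+}$ norms appearing in the proof of Lemma \ref{lemmaconvT}.
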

 
 \begin{remark}
 Under suitable conditions on the initial data $\chi_0$, the evolution can have an intermittent behaviour:  Proposition \ref{proptalbotnl} insures that at times $t_{p,q}=\frac 1{2\pi}\frac pq$ the curvature of $\chi(t)$ displays concentrations near the locations $x$ such that $x\in\frac 1q\mathbb Z$, and $\chi(t)$ is almost a straight segment in between. 
 \end{remark}
 


The proof goes as follows. In view of \eqref{angless} and Hasimoto's transform we consider an appropriate 1-D cubic NLS equation with initial data 
$$\sum_{k\in\mathbb Z} \alpha_k\delta_k,$$
with $\alpha_k$ complex numbers defined in a precise way from the curvature and torsion angles of $\chi_0$. 
Theorem \ref{thdiracs} gives us a solution $u(t)$ on $t>0$. From this smooth solution on $]0,\infty[$ we construct a smooth solution $\chi(t)$ of the binormal flow on $]0,\infty[$, that we prove it has a limit $\chi(0)$ at $t=0$. Then the goal is to show that modulo a translation and a rotation $\chi(0)$ is $\chi_0$. This is done in several steps. First we show that the tangent vector has a limit at $t=0$. Secondly we show that this limit is piecewise constant, so $\chi(0)$ is a segment for $x\in]n,n+1[,\forall n\in\mathbb Z$. Then we prove, by analyzing the frame of the curve through paths of self-similar variables, that $\chi(0)$ presents corners at the same locations as $\chi_0$, of same angles as $\chi_0$. We recover the torsion angles of $\chi_0$ by using also a similar analysis for modulated normal vectors $\tilde N(t,x)=e^{i\sum_{j\neq x}|\alpha_j|^2\log\frac{x-j}{\sqrt{t}}}N(t,x).$  
Therefore  we recover $\chi_0$ modulo a translation and a rotation. This translation and rotation applied to $\chi(t)$ give us the desired solution of the binormal flow for $t>0$ with limit $\chi_0$ at $t=0$. Uniqueness holds in the class of curves having filament functions of type \eqref{ansatzfin}. Using the above recipe to construct the evolution of a polygonal line for $t>0$ we can extend $\chi(t)$ to negative times by using the time reversibility of the equation.

\subsection{The cubic NLS on $\mathbb R$ with initial data given by several Dirac masses.}\label{ssectNLS} We consider the cubic nonlinear Schr\"odinger equation on $\mathbb R$
\begin{equation}\label{cubic}i\partial_t u+\Delta u \pm \frac 12 |u|^{2}u=0.\end{equation}

We first recall the known local well-posedness results, starting with what is known in the framework of Sobolev spaces. The equation is well-posed in $H^{s}$, for any $s\geq 0$ (\cite{GiVe},\cite{CaWe}). On the other hand, for $s<0$ the Cauchy problem is ill-posed: in \cite{KPV} uniqueness was proved to be lost by using the Galilean transformation, and in \cite{ChCoTa} norm-inflation phenomena were displayed. We note that the threshold obtained with respect of the scaling invariance $\lambda u(\lambda^2t,\lambda x)$ is $\dot H^{-\frac 12}$. For $s\leq -\frac 12$ the presence of norm inflating phenomena with loss of regularity was pointed out in \cite{CaKa},\cite{Ki}, and also norm inflating around any data was proved in \cite{Oh}. Finally growth control of Sobolev norms of Schwartz solutions for $-\frac 12<s<0$ on the line or the circle was shown in \cite{KiViZh} and \cite{KoTa}.

On the other hand well-posedness holds for data with Fourier transform in $L^p$ spaces, $p<+\infty$ (\cite{VaVe},\cite{Gr},\cite{Ch}). A natural choice would be to consider initial data with Fourier transform in $L^\infty$, as this space $\mathcal F(L^\infty)$ it is also invariant under rescaling. 

We shall now focus on the case of initial data of Dirac mass type. Note that the Dirac mass is borderline for $\dot H^{-\frac 12}$ and that it belongs to $\mathcal F(L^\infty)$. 
For an initial datum given by one Dirac mass, $u(0)=\alpha\delta_0$, the equation is ill-posed. More precisely, it is showed in \cite{KPV} by using the Galilean invariance that if there exists a unique solution, it should be for positive times 
$$\alpha\frac{e^{\mp i\frac{|\alpha|^2}{4\pi}\log \sqrt{t}+i\frac{x^2}{4t}}}{\sqrt{4\pi it}},$$
 and then the initial datum is not recovered. We note here that this issue can be avoided by a simple change of phase that leads to the equation
$$\left\{\begin{array}{c}i\partial_t u+\Delta u \pm\frac 12 \left(|u|^{2}-A(t)\right)u=0,\\ u(0)=\alpha\delta_0,\end{array}\right.$$
with $A(t)=\frac{\alpha^2}{4\pi t}$. With this choice the equation has as a solution precisely the fundamental solution of the linear equation $u_\alpha(t,x)$ introduced in \eqref{sssol}. Adding a real potential $A(t)$ is a very natural geometric normalization as the BF solution constructed from a NLS solution $u(t,x)$ is the same as the one constructed from $e^{i\phi(t)}u(t,x)$, see \S\ref{sectconstr}. This type of Wick renormalization has been used in the periodic setting in previous works as in \cite{Bo96},\cite{Ch},\cite{OhSu} and \cite{OhWa}, although the motivation in these cases came just from the need of avoiding some resonant terms that become infinite.

However, even with this geometric renormalization the problem is still ill-posed, in the sense that small regular perturbations of $u_\alpha(t)$ at time $t=1$ were proved in \cite{BV2} to behave near $t=0$ as $u_\alpha(t)+e^{i\log t} f(x)$ for some $f\in H^1$. Therefore there is a loss of phase as $t$ goes to zero. 

This loss of phase is a usual phenomena in the setting of the nonlinear Schr\"odinger equations when singularities are formed, and it is of course a consequence of the gauge invariance of the equation. How to continue the solution after the singularity has been formed is therefore an important issue that appears recurrently in the literature, see for example \cite{Me92},\cite{Me92bis},\cite{BoWa},\cite{MeRaSz}. 

In \cite{BV4} we found a natural geometric way to continue the BF solution after the singularity, in the shape of a corner, is created. As BF is time reversible, to uniquely continue a solution for negative times requires to get a curve trace $\chi(0)$ at $t=0$ and to construct a unique solution for positive times, having as limit at $t=0$ the inverse oriented curve $\chi(0,-s)$. 
Note that using just continuity arguments and the characterization result of the self-similar solutions that was proved in \cite{GRV} one can construct in an artificial way the continuation of a self-similar solution. 
A more delicate issue is how to determine the curve trace and its Frenet frame at time $t=0$ for small regular perturbations of BF self-similar solutions at some positive time, and we based our analysis in \cite{BV4} on the characterization result of the self-similar solutions that was proved in \cite{GRV}; in particular the small regular perturbations of BF self-similar solutions at some positive time do not break the self-similar symmetry of the singularity created at $t=0$. 



In Theorem \ref{brokenline} we prove that this procedure can be extended, not without difficulties, to the case of a polygonal line, that can be viewed as a rough perturbation of broken line with one corner. There is no need for the line to be planar and infinitely many corners are permitted. In this case new problems concerning the phase loss appear at the NLS and frame level and again the characterization of the self-similar solutions plays a crucial role. 

For these reasons in this article we consider as initial data a combination of Dirac masses,
\begin{equation}\label{id}u(0)=\sum_{k\in\mathbb Z}\alpha_k\delta_k,\end{equation}
with coefficients in weighted summation spaces : 
$$ \|\{\alpha_k\}\|_{l^{p,s}}<\infty,$$
where
$$ \|\{\alpha_k\}\|_{l^{p,s}}^p:=\sum_{k\in\mathbb Z}(1+|k|)^{ps}|\alpha_k|^p.$$
This choice of initial data has its own interest from the point of view of the Schr\"odinger equation, because as far as we know and for the cubic nonlinearity in one dimension the only results at the critical level of regularity are the ones in \cite{BV4} mentioned above and that deals with just one Dirac mass. The case of a periodic array of Dirac deltas of the same precise amplitude, was studied in \cite{DHV} where a candidate for a solution is proposed.

The case of a combination of Dirac masses as initial data for the Schr\"odinger equation  $|u|^{p-1}u$ with subcritical nonlinearity $p<3$ was   considered in \cite{K}. It was showed that it admits a unique solution, of the form 
\begin{equation}\label{ansatz}u(t,x)=\sum_{k\in\mathbb Z}A_k(t)e^{it\Delta}\delta_k(x),\end{equation}
where $\{A_k\}\in\mathcal C([0,T];l^{2,1})\cap C^1(]0,T];l^{2,1})$.
As the nonlinear power approaches the critical cubic power, things look more singular. In this paper we prove that the same type of ansatz is valid for a naturally renormalized cubic  equation. 

Let us notice that the initial data \eqref{id} has the property
\begin{equation}\label{Fourper}\widehat {u(0)}(\xi)=\sum_{k\in\mathbb Z}\alpha_ke^{-ik\xi},\end{equation}
and in particular $\widehat {u(0)}$ is $2\pi-$periodic. 
Moreover, the condition $\{\alpha_k\}\in l^{2,s}$ translates into $\widehat {u(0)}\in H^s(0,2\pi)$. Conversely, every $2\pi-$periodic function can be decomposed as in \eqref{Fourper} and so it represents the Fourier transform on $\mathbb R$ of a combination of Dirac masses as \eqref{ansatz}.
We denote
$$H^s_{pF}:=\{u\in\mathcal S'(\mathbb R),\,\, \hat u(\xi+2\pi)=\hat u(\xi), \hat u\in H^s(0,2\pi)\}\subset \{u\in\mathcal S'(\mathbb R),\,\,\{\|\hat u\|_{H^s(2\pi j,2\pi(j+1))}\}_j\in l^\infty\},$$
and $$\|u\|_{H^s_{pF}}=\|\hat u\|_{H^s(0,2\pi)}.$$
Our first result concerns the existence of solutions for initial data in $H^s_{pF}$.

\begin{theorem}\label{thdiracs}{\bf{(Solutions of 1-D cubic NLS linked to several Diracs masses as initial data)}} Let $s>\frac 12, 0<\gamma<1$ and $\{\alpha_k\}\in l^{2,s}$. We consider the 1-D cubic NLS equation:
\begin{equation}\label{cubicmod}\begin{array}{c}i\partial_t u +\Delta u\pm\frac 12\left(|u|^2-\frac{M}{2\pi t}\right)u=0,
\end{array}
\end{equation}
with $M=\sum_{k\in\mathbb Z}|\alpha_k|^2$. There exists $T>0$ and a unique solution on $(0,T)$ of the form 
\begin{equation}\label{ansatzfin}u(t,x)=\sum_{k\in\mathbb Z}e^{\mp i\frac{|\alpha_k|^2}{4\pi}\log \sqrt{t}}(\alpha_k+R_k(t))e^{it\Delta}\delta_k(x),\end{equation}
 with 
 \begin{equation}\label{decayansatzcubic}\sup_{0<t<T}t^{-\gamma}\|\{R_k(t)\}\|_{l^{2,s}}+t\,\|\{\partial_t R_k(t)\}\|_{l^{2,s}}<C.\end{equation}
Moreover, considering as initial data a finite sum of $N$ Dirac masses 
$$u(0)=\sum_{k\in\mathbb Z}\alpha_k\delta_k,$$
with coefficients of equal modulus 
\begin{equation}\label{01}
|\alpha_k|=a,
\end{equation}
 and equation \eqref{cubicmod} renormalized with $M=(N-\frac 12)a^2$, we have a unique solution
$$u(t)=e^{it\Delta}u(0)\pm ie^{it\Delta}\int_0^te^{-i\tau\Delta}\left(\left(|u(\tau)|^2-\frac{M}{2\pi \tau}\right)u(\tau)\right)\,\frac{d\tau}2,$$
such that $\widehat{e^{-it\Delta}u(t)}\in\mathcal C^1((-T,T),H^s(0,2\pi))$ with 
$$\|e^{-it\Delta}u(t)-u(0)\|_{H^s_{pF}}\leq C t^\gamma, \quad\forall t\in(-T,T).$$
Moreover, if $s\geq 1$ then the solution is global in time. \par

\end{theorem}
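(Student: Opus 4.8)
The plan is to reduce the PDE \eqref{cubicmod} to an infinite system of ODEs for the coefficients in the ansatz \eqref{ansatzfin} and to solve that system by a contraction in the weighted space dictated by \eqref{decayansatzcubic}. Writing $g_k:=e^{it\Delta}\delta_k=\frac{1}{\sqrt{4\pi i t}}e^{i(x-k)^2/4t}$, I seek $u=\sum_k a_k(t)g_k$. Since each $g_k$ solves the free equation, $i\partial_t u+\Delta u=\sum_n i\dot a_n g_n$, so only the nonlinearity contributes. The algebraic fact I would establish first, by completing the square in the quadratic phases, is the product identity
$$g_j\overline{g_k}g_l=\frac{1}{4\pi t}\,e^{i\frac{(j-k)(k-l)}{2t}}\,g_{j-k+l}.$$
In particular every cubic monomial is again a constant-in-$x$ multiple of some $g_n$ with $n=j-k+l$, so the ansatz is algebraically closed; matching the coefficient of $g_n$ in \eqref{cubicmod} yields
$$i\dot a_n=\mp\frac{1}{8\pi t}\sum_{j-k+l=n}a_j\overline{a_k}a_l\,e^{i\frac{(j-k)(k-l)}{2t}}\pm\frac{M}{4\pi t}a_n.$$

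Next I would split the sum according to whether $(j-k)(k-l)=0$. The resonant part equals $2M'a_n-|a_n|^2a_n$ with $M'=\sum_k|a_k|^2$, and the term $2M'a_n$ is designed to be cancelled, up to the fluctuation $M'-M$, by the Wick renormalization $\pm\frac{M}{4\pi t}a_n$ in \eqref{cubicmod}. One is left with
$$i\dot a_n=\mp\frac{M'-M}{4\pi t}a_n\pm\frac{|a_n|^2a_n}{8\pi t}\mp\frac{1}{8\pi t}\Sigma_n,\qquad \Sigma_n:=\sum_{\substack{j-k+l=n\\(j-k)(k-l)\neq0}}a_j\overline{a_k}a_l\,e^{i\frac{(j-k)(k-l)}{2t}}.$$
Substituting $a_n=e^{\mp i\frac{|\alpha_n|^2}{4\pi}\log\sqrt t}(\alpha_n+R_n)$ as in \eqref{ansatzfin} is exactly the gauge that removes the leading self-interaction $\pm\frac{|\alpha_n|^2 a_n}{8\pi t}$; the residual diagonal contribution $\pm\frac{(|a_n|^2-|\alpha_n|^2)a_n}{8\pi t}$ and the mass fluctuation become $O(R/t)$, while the $n$-independent phase generated by $M'-M$ is a harmless global rotation that preserves the $l^{2,s}$ norm. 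This produces a Duhamel system $R_n(t)=\int_0^t\big(O(\tfrac{R}{\tau})+\tfrac{1}{\tau}\widetilde\Sigma_n(\tau)\big)\,d\tau$ with $R_n(0^+)=0$, where $\widetilde\Sigma_n$ is the gauged non-resonant sum.

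I would then run a fixed point in $X_T=\{\,\sup_{0<t<T}t^{-\gamma}\|R(t)\|_{l^{2,s}}+t\|\dot R(t)\|_{l^{2,s}}<\infty\,\}$. For the diagonal and mass terms the elementary bound $\int_0^t\tau^{\gamma-1}\,d\tau=\gamma^{-1}t^\gamma$ closes the estimate, and for differences of two solutions these contributions carry a positive power of $T$ or the smallness of $R$. The decisive term is the non-resonant source: here I would integrate by parts in $\tau$, using $\frac{d}{d\tau}e^{i(j-k)(k-l)/2\tau}=-\frac{i(j-k)(k-l)}{2\tau^2}e^{i(j-k)(k-l)/2\tau}$, to trade the non-integrable factor $\frac1\tau$ for a gain $\frac{\tau}{|(j-k)(k-l)|}$, the boundary term at $\tau=0$ vanishing thanks to the prefactor $\tau$. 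The resulting trilinear expression must then be summed in $l^{2,s}$, and this is where I expect the main difficulty to lie: the hypotheses $s>\frac12$ and $\{\alpha_k\}\in l^{2,s}$ enter here through the convolution-algebra property of $l^{2,s}$ (dual to $H^s(0,2\pi)$ being a multiplication algebra) together with $(1+|n|)^s\lesssim(1+|j|)^s(1+|k|)^s(1+|l|)^s$ along $n=j-k+l$. Controlling these weighted non-resonant sums uniformly in $T$, together with the companion estimate for $t\,\dot R$, is the crux; granting it, the map is a contraction on $X_T$ for $T$ small, which gives at once the existence, the uniqueness within this class, the decay \eqref{decayansatzcubic}, and the $\mathcal C^1$ time regularity.

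Finally, for the finite sum with $|\alpha_k|=a$ and $M=(N-\tfrac12)a^2$, I would note that at leading order $M'\approx Na^2$, so $\frac{M'-M}{4\pi t}a_n\approx\frac{a^2}{8\pi t}a_n$ cancels the self-interaction $\frac{|a_n|^2a_n}{8\pi t}\approx\frac{a^2}{8\pi t}a_n$: the resonant forcing disappears, no logarithmic gauge is needed, and the solution keeps the plain Duhamel form with $\|e^{-it\Delta}u(t)-u(0)\|_{H^s_{pF}}\lesssim t^\gamma$ coming solely from the oscillatory non-resonant integral. Global existence for $s\geq1$ I would obtain by propagating the conserved mass and energy of the (gauge-transformed) system—equivalently the $L^2$ and $H^1$ norms of $\hat u$ on $(0,2\pi)$—to get an a priori bound on $\|\hat u\|_{H^s(0,2\pi)}=\|u\|_{H^s_{pF}}$ that prevents the norm from escaping to infinity, so that the local construction can be reiterated.
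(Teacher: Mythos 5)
Your local-in-time argument is essentially the paper's proof. The product identity $g_j\overline{g_k}g_l=\frac{1}{4\pi t}e^{i(j-k)(k-l)/2t}g_{j-k+l}$ gives the same discrete system that the paper derives by testing against the orthonormal family $\{e^{it\Delta}\delta_k\}$ in $L^2(0,4\pi t)$; the resonant/non-resonant splitting via $k^2-j_1^2+j_2^2-j_3^2=2(k-j_1)(j_1-j_2)$, the logarithmic gauge, the integration by parts in $\tau$ against the non-resonant phase combined with $|k^2-j_1^2+j_2^2-j_3^2|\geq 1$ and the weighted Young/algebra property of $l^{2,s}$ for $s>\frac12$, and the cancellation mechanism behind the choice $M=(N-\frac12)a^2$ are all exactly the paper's steps. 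Your handling of the term $2M'a_n$ as a perturbative common phase driven by $M'-M=O(t^\gamma)$ is a cosmetic variant of the paper's use of exact mass conservation (which forces $M'\equiv M$ once $\lim_{t\to0}|a_k(t)|=|\alpha_k|$ is imposed); both close, and your "crux" estimate is precisely the paper's trilinear convolution lemma.

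The genuine gap is in the global existence for $s\geq 1$. There is no conserved energy here: the system is non-autonomous, and the identity $\partial_t\sum_k a(k)|A_k(t)|^2=\frac{1}{16\pi t i}\sum_{k-j_1+j_2-j_3=0}\bigl(a(k)-a(j_1)+a(j_2)-a(j_3)\bigr)e^{-i(k^2-j_1^2+j_2^2-j_3^2)/4t}A_{j_1}\overline{A_{j_2}}A_{j_3}\overline{A_k}$ shows that while mass ($a(k)=1$) and momentum ($a(k)=k$) are conserved, the weight $a(k)=k^2$ produces exactly the non-resonant factor $k^2-j_1^2+j_2^2-j_3^2\neq 0$, so $\sum_k k^2|A_k(t)|^2$ is \emph{not} constant. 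The paper instead proves a growth bound: it rewrites that factor divided by $t$ as $t\,\partial_t$ of the oscillatory exponential, integrates by parts in time, controls the resulting quadrilinear sums by $\|\{A_j\}\|_{l^2}^3\|\{A_j\}\|_{l^{2,1}}$-type estimates (using weak-$l^p$ Young/H\"older and $\{j^{-1/2}\}\in l^2_w$) together with the conserved mass, and closes with Gr\"onwall; for $s>1$ a separate persistence-of-regularity argument is then needed on top of the $l^{2,1}$ control. Your proposed shortcut of "propagating the conserved mass and energy" therefore does not establish the global statement as written.
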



\begin{remark} Note that any $\alpha_j $ such that \eqref{01} does not hold will imply that the corresponding initial value problem is ill posed, similarly at what was proved in \cite{KPV} and \cite{BV4} in the case of just one Dirac mass and that we mentioned above.
\end{remark}

\begin{remark}
It is worth noting that performing the (reversible) pseudo-conformal transform to the solution $u$ of  \eqref{cubicmod}
$$u(t,x)=\frac{e^{i\frac{x^2}{4t}}}{\sqrt{4\pi it}}\overline{v}(\frac 1t,\frac xt),\qquad t>0$$
we obtain a solution $v$ of
\begin{equation}\label{cubicmodPC}i\partial_t v +\Delta v\pm\frac 1{8\pi t}\left(|v|^2-2M\right)v=0.
\end{equation}
This was the procedure we used in \cite{BV4}.

To impose the ansatz \eqref{ansatz} on $u$ is equivalent to
\begin{equation}\label{ansatzPC}v(t,x)=\sum_{k\in\mathbb Z}\overline{A_k}(\frac 1t)e^{-i\frac{tk^2}{4}+i\frac {xk}2}.\end{equation}
Therefore after pseudo-conformal transform our problem reduces to solve \eqref{cubicmodPC} in the periodic setting with period  $[0,4\pi]$. Note that from \eqref{ansatz} we have that $|\widehat{u(t)}(\xi)|$ is $2\pi$ periodic. 
\end{remark}

The proof of the theorem goes as follows. Plugging the general ansatz \eqref{ansatz} 
into equation \eqref{cubicmod} leads to a discrete system on $\{A_k(t)\}$, by  using the fact that for fixed $t$ the family $e^{it\Delta}\delta_k(x)=\frac{e^{i\frac{(x-k)^2}{4t}}}{\sqrt{4\pi it}}$ is an orthonormal family of $L^2(0,4\pi t)$. We solve the discrete system on $\{A_k(t)\}$ by a fixed point argument with $R_k(t)=e^{-i\frac{|\alpha_k|^2}{4\pi}\log \sqrt{t}}A_k(t)-\alpha_k$ satisfying \eqref{decayansatzcubic}. In the case of initial data a finite sum of $N$ Dirac masses with coefficients of equal modulus and equation \eqref{cubicmod} renormalized with $M=(N-\frac 12)a^2$, we are led to solve the same fixed point for $R_k(t)=A_k(t)-\alpha_k$.

\begin{remark}\label{l^2}The resonant part of the discrete system of $\{A_k(t)\}$ is
$$i\partial a_k(t)=\frac1{8\pi t}a_k(t)(2\sum_j|a_j(t)|^2-|a_k(t)|^2-2M).$$
It is a non-autonomous singular time-dependent coefficient version of the resonant system of standard 1-D cubic NLS. Indeed, usually for questions concerning the long-time behavior of cubic NLS, one introduces
$$v(t)=e^{-it\Delta}u(t).$$
In the 1-D periodic case the Fourier coefficients of $v(t)$ satisfy the system 
\begin{equation*}\label{rescubic} i\partial_t v_k(t)=\sum_{k-j_1+j_2-j_3=0}e^{-it(k^2-j_1^2+j_2^2-j_3^3)}v_{j_1}(t)\overline{v_{j_2}(t)}v_{j_3}(t),\end{equation*}
so that the resonant system is:
$$i\partial a_k(t)=a_k(t)(2\sum_j|a_j(t)|^2-|a_k(t)|^2).$$
Of course, for  1-D periodic NLS with data in $H^s, s>\frac 12$ (that corresponds to $\{v_n(0)\}\in l^{2,s}\subset l^1$) there is no issue for obtaining directly the local existence.
\end{remark}

\begin{remark}The regularity of $\{\alpha_j\}$ might be weakened to $l^p$ spaces only ($p<\infty$), see Remark \ref{remlow}. It is evident from \eqref{cubicmodPC} that formally
\begin{equation}\label{02}\partial_t\sum_j|A_j(t)|^2=0,
\end{equation}
and therefore the $l^2$ norm
is preserved
\footnote {equivalently $\int _0^{4\pi}|v(t,x)|^2\,dx=constant$}. As a matter of fact this says that the selfsimilar solutions have finite mass for the 1-D cubic NLS when the mass is appropriately defined. This has nothing to do with the complete integrability of the system because still works in the subcritical cases studied in \cite{K}. 

Note that to solve \eqref{cubicmodPC} for $t\geq T_0>0$ is quite straightforward making use of the available Strichartz estimates in the periodic setting -see \cite{Bo93} and also \cite{MoVe} for a slight modification. However, these methods do not give the behavior of the solution $v$ when time approaches infinity which is absolutely crucial for proving Theorem \ref{brokenline}. As a consequence we are led to make a more refined analysis. In view of Theorem \ref{brokenline} we consider weighted $l^{2,s}$ spaces; this in particular will allow us to rigorusly prove that \eqref{02} holds.
\end{remark}


The paper is structured as follows. In the next section we prove Theorem \ref{thdiracs}, and also the extension Theorem \ref{thdiracs3} concerning some cases of Dirac masses not necessary located at integer numbers. Section \ref{sectTalbot} contains the proof of a Talbot effect for some solutions given by Theorem \ref{thdiracs}. In the last section we prove Theorem \ref{brokenline}.

\medskip 
{\bf{Acknowledgements:}} Both authors were partially supported by the by an ERCEA Advanced Grant 2014 669689 - HADE. The second author was partially by the MEIC project MTM2014-53850-P and MEIC Severo Ochoa excellence accreditation SEV-2013-0323..\\



\section{The 1-D cubic NLS with initial data given by several Dirac masses}
In this section we give the proof of Theorem \ref{thdiracs}.
\subsection{The fixed point framework}
We denote $\mathcal N(u)=\frac{|u|^2u}2$. By plugging the ansatz \eqref{ansatz} into equation \eqref{cubicmod} we get
\begin{equation}\label{syst}\sum_{k\in\mathbb Z}i\partial_t A_k(t)e^{it\Delta}\delta_k=\mathcal N(u)-\frac{M}{4\pi t}u=\mathcal N(\sum_{j\in\mathbb Z}A_j(t)e^{it\Delta}\delta_j)-\frac{M}{4\pi t}(\sum_{k\in\mathbb Z}A_k(t)e^{it\Delta}\delta_k).\end{equation}
We have chosen here for simplicity the sign $-$ in \eqref{cubicmod}; the sign $+$ can be treated the same. \\
The family $e^{it\Delta}\delta_k(x)=\frac{e^{i\frac{(x-k)^2}{4t}}}{\sqrt{4\pi it}}$ is an orthonormal family of $L^2(0,4\pi t)$ so by taking the scalar product of $L^2(0,4\pi t)$ with $e^{it\Delta}\delta_k$ we obtain
$$i\partial_t A_k(t)=\int_0^{4\pi t} \mathcal N(\sum_{j\in\mathbb Z}A_j(t)\frac{e^{i\frac{(x-j)^2}{4t}}}{\sqrt{4\pi it}})\frac{-e^{i\frac{(x-k)^2}{4t}}}{\overline{\sqrt{4\pi it}}}\,dx-\frac{M}{4\pi t}A_k(t).$$
Note that as $s>\frac 12$ we have $\{A_j\}\in l^{2,s}\subset l^1$ and we can develop the cubic power to get 
\begin{equation}\label{2corners0}i\partial_t A_k(t)=\frac{1}{8\pi t}\sum_{k-j_1+j_2-j_3=0}e^{-i\frac{k^2-j_1^2+j_2^2-j_3^2}{4t}}A_{j_1}(t)\overline{A_{j_2}(t)}A_{j_3}(t)- \frac{M}{4\pi t}A_k(t).\end{equation}

We note  already that for a sequence of real numbers $a(k)$ we have:
\begin{equation}\label{cons}\partial_t \sum_k a(k)|A_k(t)|^2=\frac{1}{4\pi t}\Im \, \sum_{k-j_1+j_2-j_3=0}a(k)e^{-i\frac{k^2-j_1^2+j_2^2-j_3^2}{4t}}A_{j_1}(t)\overline{A_{j_2}(t)}A_{j_3}(t)\overline{A_k(t)}\end{equation}
$$=\frac{1}{8\pi ti}\left( \sum_{k-j_1+j_2-j_3=0}a(k)e^{-i\frac{k^2-j_1^2+j_2^2-j_3^2}{4t}}A_{j_1}(t)\overline{A_{j_2}(t)}A_{j_3}(t)\overline{A_k(t)}\right.$$
$$\left.-\sum_{j_3-j_2+j_1-k=0}a(k)e^{-i\frac{j_3^2-j_2^2+j_1^2-k^2}{4t}}A_{j_2}(t)\overline{A_{j_1}(t)}A_{k}(t)\overline{A_{j_3}(t)}\right)$$
$$=\frac{1}{8\pi ti}\sum_{k-j_1+j_2-j_3=0}(a(k)-a(j_3))e^{-i\frac{k^2-j_1^2+j_2^2-j_3^2}{4t}}A_{j_1}(t)\overline{A_{j_2}(t)}A_{j_3}(t)\overline{A_k(t)}$$
 $$=\frac{1}{16\pi ti}\sum_{k-j_1+j_2-j_3=0}(a(k)-a(j_1)+a(j_2)-a(j_3))e^{-i\frac{k^2-j_1^2+j_2^2-j_3^2}{4t}}A_{j_1}(t)\overline{A_{j_2}(t)}A_{j_3}(t)\overline{A_k(t)}.$$
Therefore the system conserves the ``mass" :
\begin{equation}\label{mass}\sum_k|A_k(t)|^2= \sum_k|A_k(0)|^2,\end{equation}
and the momentum
\begin{equation}\label{moment}\sum_kk|A_k(t)|^2= \sum_kk|A_k(0)|^2.\end{equation}

We split the summation indices of \eqref{2corners0} into the following two sets:
$$NR_k=\{(j_1,j_2,j_3)\in\mathbb Z^3, k-j_1+j_2-j_3=0, k^2-j_1^2+j_2^2-j_3^2\neq0\},$$
$$Res_k=\{(j_1,j_2,j_3)\in\mathbb Z^3, k-j_1+j_2-j_3=0, k^2-j_1^2+j_2^2-j_3^2=0\}.$$
As we are in one dimension, the second set is simply
$$Res_k=\{(k,j,j), (j,j,k), j\in\mathbb Z\},$$
as for $k-j_1+j_2-j_3=0$ we have
$$k^2-j_1^2+j_2^2-j_3^3=2(k-j_1)(j_1-j_2).$$
In particular we get
$$\sum_{k-j_1+j_2-j_3=0}e^{-i\frac{k^2-j_1^2+j_2^2-j_3^2}{4t}}A_{j_1}(t)\overline{A_{j_2}(t)}A_{j_3}(t)=\sum_{j_1,j_2\in\mathbb Z}e^{-i\frac{2(k-j_1)(j_1-j_2)}{4t}}A_{j_1}(t)\overline{A_{j_2}(t)}A_{k-j_1+j_2}(t)$$
$$=\sum_{j_1\neq k}\sum_{j_2\neq j_1}e^{-i\frac{2(k-j_1)(j_1-j_2)}{4t}}A_{j_1}(t)\overline{A_{j_2}(t)}A_{k-j_1+j_2}(t)+\sum_{j_1\neq k}A_{j_1}(t)\overline{A_{j_1}(t)}A_{k}(t)+\sum_{j_2\in\mathbb Z}A_k(t)\overline{A_{j_2}(t)}A_{j_2}(t).$$
Therefore the system \eqref{syst} writes
\begin{equation}\label{2corners1initial}i\partial_t A_k(t)=\frac{1}{8\pi t}\sum_{(j_1,j_2,j_3)\in NR_k}e^{-i\frac{k^2-j_1^2+j_2^2-j_3^3}{4t}}A_{j_1}(t)\overline{A_{j_2}(t)}A_{j_3}(t)\end{equation}
$$+\frac1{8\pi t}A_k(t)(2\sum_j|A_j(t)|^2-|A_k(t)|^2-2M).$$

As we have already noticed, this system conserves the ``mass" $\sum_j|A_j(t)|^2$, so since $M=\sum_j|\alpha_j|^2$, finding a solution for $t>0$ satisfying
\begin{equation}\label{idcond}\underset{t\rightarrow 0}{\lim}|A_j(t)|=|\alpha_j|,\end{equation}  
is equivalent to finding a solution for $t>0$ satisfying also \eqref{idcond}, for the following also ``mass"-conserving system:
\begin{equation}\label{2corners1}i\partial_t A_k(t)=\frac{1}{8\pi t}\sum_{(j_1,j_2,j_3)\in NR_k}e^{-i\frac{k^2-j_1^2+j_2^2-j_3^3}{4t}}A_{j_1}(t)\overline{A_{j_2}(t)}A_{j_3}(t)-\frac 1{8\pi t}|A_k(t)|^2A_k(t).\end{equation}

By doing a change of phase $A_k(t)=e^{i\frac{ |\alpha_k|^2}{4\pi}\log \sqrt{t}}\tilde A_k(t)$ we get as a system
\begin{equation}\label{systtilde}i\partial_t \tilde A_k(t)=f_k(t)-\frac 1{8\pi t}(|\tilde A_k(t)|^2-|\alpha_k|^2)\tilde A_k(t),\end{equation}
where
\begin{equation}\label{nonrestpart}f_k(t)=\frac{1}{8\pi t}\sum_{(j_1,j_2,j_3)\in NR_k}e^{-i\frac{k^2-j_1^2+j_2^2-j_3^3}{4t}}e^{-i\frac{|\alpha_k|^2-|\alpha_{j_1}|^2+|\alpha_{j_2}|^2-|\alpha_{j_3}|^2}{4\pi}\log\sqrt{t}}\tilde A_{j_1}(t)\overline{\tilde A_{j_2}(t)}\tilde A_{j_3}(t).\end{equation}

Now we note that a solution of \eqref{systtilde} satisfies
\begin{equation}\label{massev}\partial_t |\tilde A_k(t)|^2=2\Im (f_k(t)\overline{\tilde A_k(t)}),\end{equation}
so obtaining a solution of \eqref{systtilde} for $t>0$ with 
\begin{equation}\label{idcondtilde}\underset{t\rightarrow 0}{\lim}|\tilde A_k(t)|=|\alpha_k|,\end{equation}  
is equivalent to obtaining a solution for $t>0$ also satisfying \eqref{idcondtilde}, for the following system, that also enjoys \eqref{massev}:
\begin{equation}\label{systtildebis}i\partial_t \tilde A_k(t)=f_k(t)-\frac 1{8\pi t}\int_0^t 2\Im (f_k(\tau)\overline{\tilde A_k(\tau)})d\tau\,\tilde A_k(t).\end{equation}

We recall that we expect solutions behaving as $A_k(t)=e^{i\frac{ |\alpha_k|^2}{4\pi}\log \sqrt{t}}(\alpha_k+R_k(t))$, with $\{R_k\}$ in the space:
\begin{equation}\label{decay} X^\gamma:=\{\{f_k\}\in\mathcal C^1((0,T),l^{2,s}),\,\,\|\{t^{-\gamma}f_k(t)\}\|_{L^\infty(0,T) l^{2,s}}+\|\{t\,\partial_tf_k(t)\}\|_{L^\infty(0,T) l^{2,s}}<\infty\},\end{equation}
with $T$ to be specified later. We also denote
$$\|\{f_k\}\|_{X^\gamma}=\|\{t^{-\gamma}f_k(t)\}\|_{L^\infty(0,T) l^{2,s}}+\|\{t\,\partial_tf_k(t)\}\|_{L^\infty(0,T) l^{2,s}}.$$
To prove the theorem we shall show that we have a contraction on a  suitable chosen ball of size $\delta$  of $X^\gamma$ for the operator $\Phi$ sending $\{R_k\}$ into 
$$\Phi(\{R_k\})=\{\Phi_k(\{R_j\})\},$$
with
$$\Phi_k(\{R_j\})(t)=i\int_0^t g_k(\tau)d\tau-i\int_0^t \int_0^\tau \Im (g_k(s)\overline{(\alpha_{k}+R_{k}(s)})ds\,(\alpha_{k}+R_{k}(\tau))\frac{d\tau}{4\pi\tau},$$ \\
where
$$g_k(t)=\,\frac{1}{8\pi t}\sum_{(j_1,j_2,j_3)\in NR_k}e^{-i\frac{k^2-j_1^2+j_2^2-j_3^2}{4t}}e^{-i\omega_{k,j_1,j_2,j_3}\log \sqrt{t}}(\alpha_{j_1}+R_{j_1}(t))\overline{(\alpha_{j_2}+R_{j_2}(t))}(\alpha_{j_3}+R_{j_3}(t)),$$
and 
$\omega_{k,j_1,j_2,j_3}=\frac{|\alpha_k|^2-|\alpha_{j_1}|^2+|\alpha_{j_2}|^2-|\alpha_{j_3}|^2}{4\pi}$.
\smallskip

Finally we note that in the case of $N$ Dirac masses with coefficients $|\alpha_k|=a$ and equation \eqref{cubicmod}  with $M=(N-\frac 12)a^2$, we get instead of \eqref{2corners1} the equation
\begin{equation}\label{2corners2}i\partial_t A_k(t)=\frac{1}{8\pi t}\sum_{(j_1,j_2,j_3)\in NR_k}e^{-i\frac{k^2-j_1^2+j_2^2-j_3^3}{4t}}A_{j_1}(t)\overline{A_{j_2}(t)}A_{j_3}(t)-\frac 1{8\pi t}(|A_k(t)|^2-|\alpha_k|^2)A_k(t).\end{equation}
Hence we can write $A_k(t)=\alpha_k+R_k(t)$ and the same fixed point argument works for $\{R_k\}$.

\subsection{The fixed point argument estimates} 

\begin{lemma} For $\{R_k\}\in X^\gamma$ with $\|\{R_k\}\|_{X^\gamma}\leq \delta$ we have the following estimates:
\begin{equation}\label{gk}\|\{g_k(t)\}\|_{l^{2,s}}\leq \frac Ct(\|\{\alpha_k\}\|^3_{l^{2,s}}+t^{3\gamma}\delta^3), 
 \end{equation}
\begin{equation}\label{gkint}\|\{\int_0^t g_k(\tau)d\tau\}\|_{l^{2,s}}\leq Ct(\|\{\alpha_{j}\}\|_{l^{2,s}}^3+\|\{\alpha_{j}\}\|_{l^{2,s}}^5+t^{3\gamma}(1+\|\{\alpha_k\}\|_{l^{2,s}}^2)\delta^3+\|\{\alpha_{j}\}\|_{l^{2,s}}^2\delta+t^{2\gamma}\delta^3), 
 \end{equation}
\begin{equation}\label{gkintbis}\|\{\int_0^t g_k(\tau)\overline{(\alpha_{k}+R_{k}(\tau)})d\tau\}\|_{l^{2,s}}\leq Ct(\|\{\alpha_k\}\|_{l^{2,s}}+t^{\gamma}\delta) 
 \end{equation}
 $$\times (\|\{\alpha_{j}\}\|_{l^{2,s}}^3+\|\{\alpha_{j}\}\|_{l^{2,s}}^5+t^{3\gamma}(1+\|\{\alpha_k\}\|_{l^{2,s}}^2)\delta^3+\|\{\alpha_{j}\}\|_{l^{2,s}}^2\delta+t^{2\gamma}\delta^3).$$

\end{lemma}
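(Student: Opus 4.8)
The plan is to exploit two structural facts. First, that $l^{2,s}$ is a convolution algebra for $s>\frac12$: since $l^{2,s}\hookrightarrow l^1$ when $2s>1$, and since $(1+|k|)^s\leq C\big((1+|k-j|)^s+(1+|j|)^s\big)$, Young's inequality gives $\|a*b\|_{l^{2,s}}\leq C\|a\|_{l^{2,s}}\|b\|_{l^{2,s}}$. Second, the algebraic identity already recorded, namely that on $NR_k$ one has $\mu:=k^2-j_1^2+j_2^2-j_3^2=2(j_2-j_1)(j_2-j_3)$, a \emph{nonzero} integer, so that $|\mu|\geq1$. Throughout I abbreviate $B_j(t):=\alpha_j+R_j(t)$ and $\vec\jmath:=(j_1,j_2,j_3)$, and I record the two consequences of $\|\{R_k\}\|_{X^\gamma}\leq\delta$ used repeatedly: $\|\{R_k(t)\}\|_{l^{2,s}}\leq t^\gamma\delta$ and $\|\{t\,\partial_tR_k(t)\}\|_{l^{2,s}}\leq\delta$, whence $\|\{B_j(t)\}\|_{l^{2,s}}\leq\|\{\alpha_j\}\|_{l^{2,s}}+t^\gamma\delta$. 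Estimate \eqref{gk} is pointwise in time and is the easy one; \eqref{gkint} and \eqref{gkintbis} are the time-integrated bounds, where the work lies.

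For \eqref{gk} I would pull out $\frac1{8\pi t}$, discard the two unimodular exponentials, and bound the $NR_k$-sum by the full sum of moduli (enlarging the set of indices only increases it), so that $|g_k(t)|\leq\frac Ct\sum_{j_1-j_2+j_3=k}|B_{j_1}||B_{j_2}||B_{j_3}|=\frac Ct\,(|B|*|\tilde B|*|B|)_k$, where $\tilde B_j:=\overline{B_{-j}}$ has the same $l^{2,s}$ norm as $B$. The convolution-algebra estimate then gives $\|\{g_k(t)\}\|_{l^{2,s}}\leq\frac Ct\|\{B_j\}\|_{l^{2,s}}^3\leq\frac Ct(\|\{\alpha_j\}\|_{l^{2,s}}+t^\gamma\delta)^3$, and the cross terms of the cube are absorbed into $\|\{\alpha_j\}\|_{l^{2,s}}^3+t^{3\gamma}\delta^3$ by Young's inequality ($a^2b\lesssim a^3+b^3$), which is \eqref{gk}.

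The heart of the matter is \eqref{gkint}, where integrating \eqref{gk} directly fails because $\int_0^t\frac{d\tau}\tau$ diverges. Here I would integrate by parts in time, using $\frac1\tau e^{-i\mu/4\tau}=\frac{4\tau}{i\mu}\,\partial_\tau\big(e^{-i\mu/4\tau}\big)$ for each nonresonant $\mu\neq0$. Writing $g_k(\tau)=\frac1{8\pi\tau}\sum_{NR_k}e^{-i\mu/4\tau}h_{\vec\jmath}(\tau)$ with $h_{\vec\jmath}(\tau)=e^{-i\omega_{k,j_1,j_2,j_3}\log\sqrt\tau}B_{j_1}\overline{B_{j_2}}B_{j_3}$, this yields
\[\int_0^t g_k\,d\tau=\sum_{NR_k}\frac1{2\pi i\mu}\Big(t\,h_{\vec\jmath}(t)e^{-i\mu/4t}-\int_0^te^{-i\mu/4\tau}\,\partial_\tau\big(\tau h_{\vec\jmath}(\tau)\big)\,d\tau\Big),\]
the boundary term at $\tau=0$ vanishing since $\tau h_{\vec\jmath}(\tau)\to0$. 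Two features make this work: the prefactor $\frac1{|\mu|}\leq1$ is harmless, and $\partial_\tau(\tau h_{\vec\jmath})=h_{\vec\jmath}+\tau h_{\vec\jmath}'$ is now $O(1)$ in $\tau$ rather than $O(1/\tau)$, so the remaining integral contributes the overall factor $t$. When $\partial_\tau$ hits the phase it produces $-\frac{i\omega}2h_{\vec\jmath}$ with $|\omega|\lesssim|\alpha_k|^2+\sum_i|\alpha_{j_i}|^2$; after summation, using the pointwise bound $\|\{a_jb_j\}\|_{l^{2,s}}\leq\|\{a_j\}\|_{l^\infty}\|\{b_j\}\|_{l^{2,s}}\leq\|\{a_j\}\|_{l^{2,s}}\|\{b_j\}\|_{l^{2,s}}$, this accounts for the $\|\{\alpha_j\}\|_{l^{2,s}}^5$ and $t^{3\gamma}\|\{\alpha_j\}\|_{l^{2,s}}^2\delta^3$ contributions. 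The terms where $\partial_\tau$ hits a $B_{j_i}$ bring a factor $\tau R_{j_i}'$ controlled by $\delta$, giving the $\|\{\alpha_j\}\|_{l^{2,s}}^2\delta$ and $t^{2\gamma}\delta^3$ contributions; and the undifferentiated boundary and $h_{\vec\jmath}$ terms give $\|\{\alpha_j\}\|_{l^{2,s}}^3$ and $t^{3\gamma}\delta^3$. Collecting these and absorbing cross terms by Young is exactly \eqref{gkint}.

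Finally, \eqref{gkintbis} is \eqref{gkint} carrying an extra factor $\overline{B_k(\tau)}$ indexed by the \emph{output} variable $k$, and I would run the same integration by parts with $h_{\vec\jmath}$ replaced by $h_{\vec\jmath}\,\overline{B_k}$, so that $\partial_\tau(\tau h_{\vec\jmath}\overline{B_k})=\partial_\tau(\tau h_{\vec\jmath})\,\overline{B_k}+\tau h_{\vec\jmath}\,\overline{R_k'}$. In every resulting term the $k$-dependent factor is pulled out via $\|\{a_kb_k\}\|_{l^{2,s}}\leq\|\{a_k\}\|_{l^\infty}\|\{b_k\}\|_{l^{2,s}}$: either $\overline{B_k}$, bounded in $l^\infty$ by $\|\{\alpha_j\}\|_{l^{2,s}}+t^\gamma\delta$, which reproduces the bound of \eqref{gkint} multiplied by $(\|\{\alpha_k\}\|_{l^{2,s}}+t^\gamma\delta)$; or $\overline{R_k'}$, whose $\tau R_k'$ is controlled by $\delta$ through the $X^\gamma$ norm and is likewise absorbed, giving \eqref{gkintbis}. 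The one genuine obstacle is the time integration by parts of the preceding paragraph: without exploiting the nonresonance $\mu\neq0$ the $1/\tau$ singularity is not integrable, and it is precisely the gain of the bounded factor $1/\mu$ together with the regularized integrand $\partial_\tau(\tau h_{\vec\jmath})$ that converts the divergent $\int d\tau/\tau$ into the convergent $\int d\tau\sim t$.
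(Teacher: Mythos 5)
Your proposal is correct and follows essentially the same route as the paper: the pointwise bound via the triple convolution and the $l^{2,s}$ algebra property, the time-integrated bounds via integration by parts in $\tau$ using $e^{-i\mu/4\tau}=\frac{4\tau^2}{i\mu}\partial_\tau e^{-i\mu/4\tau}$ with $|\mu|\geq 1$ on $NR_k$, and the extra $k$-indexed factor in \eqref{gkintbis} pulled out in $l^\infty$. The only thing the paper adds is an explicit justification of the integration by parts (truncating the time integral at $\eta>0$ to legitimize the interchange of the discrete sum with the integral before letting $\eta\to 0$), which you assert but do not spell out.
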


\begin{proof}

We note first that
$$\{M_j\}\star \{{N_j}\}\star\{P_j\}(k)=\sum_{(j_1,j_2,j_3)\in NR_k\cup Res_k} M_{j_1}N_{j_2}P_{j_3},$$
so in particular 
\begin{equation}\label{conv}\left| \sum_{(j_1,j_2,j_3)\in NR_k} M_{j_1}N_{j_2}P_{j_3}\right|\leq \{|M_j|\}\star \{{|N_j|}\}\star\{|P_j|\}(k).\end{equation}
We shall frequently use the following inequality:
\begin{equation}\label{young}\|\{M_j\}\star \{{N_j}\}\star\{P_j\}\|_{l^{\infty}}+\|\{M_j\}\star \{{N_j}\}\star\{P_j\}\|_{l^{2,s}}\leq C\|\{M_j\}\|_{l^{2,s}}\|\{N_j\}\|_{l^{2,s}}\|\{P_j\}\|_{l^{2,s}}.\end{equation}
The first part follows from $l^{2,s}\subset l^1$ and the second part follows using also the weighted Young argument on two series:
$$\|\{M_j\}\star \{{N_j}\}\|_{l^{2,s}}\leq C\|\{M_j\}\star \{(1+|j|)^{s}N_j\}\|_{l^{2}}+C\|\{(1+|j|)^{s}M_j\}\star\{N_j\}\|_{l^{2}}$$
$$\leq C\|\{M_j\}\|_{l^{1}}\|\{(1+|j|)^{s}N_j\}\|_{l^{2}}+C\|\{(1+|j|)^{s}M_j\}\|_{l^{2}}\|\{N_j\}\|_{l^{1}}\leq C\|\{M_j\}\|_{l^{2,s}}\|\{N_j\}\|_{l^{2,s}}.$$


Therefore by \eqref{conv} we have 
$$|g_k(t)|\leq\frac{C}{t}\sum_{(j_1,j_2,j_3)\in NR_k}(|\alpha_{j_1}|+|R_{j_1}(t)|)(|\alpha_{j_2}|+|R_{j_2}(t)|)(|\alpha_{j_3}|+|R_{j_3}(t)|)$$
$$\leq \frac{C}{t}\{|\alpha_{j}|+|R_{j}(t)|\}\star \{|\alpha_{j}|+|R_{j}(t)|\}\star\{|\alpha_{j}|+|R_{j}(t)|\}(k),$$
and by \eqref{young} we get \eqref{gk}.

To estimate $\int_0^t g_k(\tau)d\tau$ we perform an integration by parts to get advantage of the non-resonant phase and to obtain integrability in time:
\begin{equation}\label{05}i\int_0^t g_k(\tau)d\tau=t\sum_{(j_1,j_2,j_3)\in NR_k}\frac{e^{-i\frac{k^2-j_1^2+j_2^2-j_3^2}{4t}}e^{-i\omega_{k,j_1,j_2,j_3}\log \sqrt{t}}}{\pi (k^2-j_1^2+j_2^2-j_3^2)}(\alpha_{j_1}+R_{j_1}(t))\overline{(\alpha_{j_2}+R_{j_2}(t))}(\alpha_{j_3}+R_{j_3}(t))
\end{equation}
$$-\int_0^t \sum_{(j_1,j_2,j_3)\in NR_k}\frac{e^{-i\frac{k^2-j_1^2+j_2^2-j_3^2}{4\tau}}}{\pi (k^2-j_1^2+j_2^2-j_3^2)}$$
$$\times \partial_\tau(\tau e^{-i\omega_{k,j_1,j_2,j_3}\log \sqrt{\tau}}(\alpha_{j_1}+R_{j_1}(\tau))\overline{(\alpha_{j_2}+R_{j_2}(\tau))}(\alpha_{j_3}+R_{j_3}(\tau)))\,d\tau.$$
Indeed, for fixed $t$, this computation is justified by considering, for $0<\eta<t$, the quantity $I_k^{\eta}(t)$  defined as $\Phi_k^1(\{R_j\})(t)$ but with the integral in time from $\eta$  instead of $0$. More precisely, $I_k^{\eta}(t)$ is well defined as the integrand can be upper-bounded using \eqref{conv} and \eqref{young} by the function $C\frac {\|\{\alpha_j\}\|^3_{l^{2,s}}+\|\{R_j(\tau)\}\|^3_{l^{2,s}}} \tau$ which is integrable on $(\eta,t)$. In particular the discrete summation commutes with the integration in time. Performing then integrations by parts on $I_k^{\eta}(t)$ as above, we obtain for  $I_k^{\eta}(t)$ an expression that yields as $\eta\rightarrow 0$ the above expression for $\int_0^t g_k(\tau)d\tau$. \\
We obtain, in view of \eqref{young}, and on the fact that on the resonant set $|k^2-j_1^2+j_2^2-j_3^2|\geq 1$,
$$|\int_0^t g_k(\tau)d\tau|\leq Ct\{|\alpha_{j}|+|R_{j}(t)|\}\star \{|\alpha_{j}|+|R_{j}(t)|\}\star\{|\alpha_{j}|+|R_{j}(t)|\}(k)$$
$$+C(1+\|\{\alpha_k\}\|_{l^{\infty}}^2)\int_0^t \{|\alpha_{j}|+|R_{j}(\tau)|\}\star \{|\alpha_{j}|+|R_{j}(\tau)|\}\star\{|\alpha_{j}|+|R_{j}(\tau)|\}(k)\,d\tau$$
$$+C\int_0^t  \{|\tau \,\partial_\tau R_{j}(\tau)|\}\star \{|\alpha_{j}|+|R_{j}(\tau)|\}\star\{|\alpha_{j}|+|R_{j}(\tau)|\}(k)\,d\tau.$$
We perform Cauchy-Schwarz in the integral terms to get the squares for the discrete variable and we sum using \eqref{young}:
$$\|\{\int_0^t g_k(\tau)d\tau\}\|_{l^{2,s}}^2 \leq Ct^2\,  \|\{\alpha_{j}+R_{j}(t)\}\|_{l^{2,s}}^6+C(1+\|\{\alpha_k\}\|_{l^{2,s}}^4)\,t \int_0^t \|\{\alpha_{j}+R_{j}(\tau)\}\|_{l^{2,s}}^6d\tau$$
$$+Ct \int_0^t \|\{\alpha_{j}+R_{j}(\tau)\}\|_{l^{2,s}}^4\|\{\tau\partial_\tau R_j(\tau)\}\|_{l^{2,s}}^2d\tau.$$
Therefore we get \eqref{gkint}:
$$\|\{\int_0^t g_k(\tau)d\tau\}\|_{ l^{2,s}} \leq Ct(  \|\{\alpha_{j}\}\|_{l^{2,s}}^3+\|\{R_{j}(t)\}\|_{l^{2,s}}^3+  \|\{\alpha_{j}\}\|_{l^{2,s}}^5)$$
$$+C(1+\|\{\alpha_k\}\|_{l^{2,s}}^2)t^{1+3\gamma}\|\{\tau^{-\gamma}R_{j}(\tau)\}\|_{L^\infty(0,T),l^{2,s}}^3$$
$$+Ct \|\{\alpha_{j}\}\|_{l^{2,s}}^2\|\{\tau\partial_\tau R_{j}(\tau)\}\|_{L^\infty(0,T),l^{2,s}}+Ct^{1+2\gamma}\|\{\tau^{-\gamma}R_{j}(\tau)\}\|_{L^\infty(0,T),l^{2,s}}^2\|\{\tau\partial_\tau R_{j}(\tau)\}\|_{L^\infty(0,T),l^{2,s}}$$
$$\leq Ct(\|\{\alpha_{j}\}\|_{l^{2,s}}^3+\|\{\alpha_{j}\}\|_{l^{2,s}}^5)+Ct^{1+3\gamma}(1+\|\{\alpha_k\}\|_{l^{2,s}}^2)\delta^3+Ct\|\{\alpha_{j}\}\|_{l^{2,s}}^2\delta+Ct^{1+2\gamma}\delta^3.$$

The last estimate \eqref{gkintbis} is obtained the same way as \eqref{gkint}, by adding in the computations the extra-term $\alpha_k+R_k(\tau)$ and by upper-bounding it in modulus by $\|\{\alpha_k\}\|_{l^{2,s}}+\tau^{\gamma}\delta$.
 \end{proof}
 
 We now use \eqref{gk} and \eqref{gkintbis} to get
 $$\|\{\partial_t\Phi_k(\{R_j\})(t)\}\|_{l^{2,s}}\leq \|\{ g_k(t)d\tau\}\|_{l^{2,s}}+\|\{ \int_0^t \Im (g_k(s)\overline{(\alpha_{k}+R_{k}(s)})ds\,(\alpha_{k}+R_{k}(t))\}\|_{l^{2,s}}\frac{C}{t}$$
$$\leq \frac Ct(\|\{\alpha_k\}\|^3_{l^{2,s}}+t^{3\gamma}\delta^3)+C(\|\{\alpha_k\}\|_{l^{2,s}}+t^{\gamma}\delta)^2$$
 $$\times (\|\{\alpha_{j}\}\|_{l^{2,s}}^3+\|\{\alpha_{j}\}\|_{l^{2,s}}^5+t^{3\gamma}(1+\|\{\alpha_k\}\|_{l^{2,s}}^2)\delta^3+\|\{\alpha_{j}\}\|_{l^{2,s}}^2\delta+t^{2\gamma}\delta^3).$$
On the other hand, 
 $$|\{\Phi_k(\{R_j\})(t)\}|\leq \left|\int_0^t g_k(\tau)d\tau\right|+\left|\int_0^t\int_0^\tau \Im (g_k(s)\overline{(\alpha_{k}+R_{k}(s)})ds\,(\alpha_{k}+R_{k}(\tau))\frac{d\tau}{4\pi\tau}\right|,$$
 so by Cauchy-Schwarz
 $$|\{\Phi_k(\{R_j\})(t)\}|^2\leq C\left|\int_0^t g_k(\tau)d\tau\right|^2+C\sqrt{t}\int_0^t\left|\int_0^\tau \Im (g_k(s)\overline{(\alpha_{k}+R_{k}(s)})ds\right|^2\,(|\alpha_{k}|^2+|R_{k}(\tau)|^2)\frac{d\tau}{\tau^\frac 32}.$$
Now we use \eqref{gkint} and \eqref{gkintbis} to get
$$\|\{\Phi_k(\{R_j\})(t)\}\|_{l^{2,s}}\leq Ct(\|\{\alpha_{j}\}\|_{l^{2,s}}^3+\|\{\alpha_{j}\}\|_{l^{2,s}}^5+t^{3\gamma}(1+\|\{\alpha_k\}\|_{l^{2,s}}^2)\delta^3+\|\{\alpha_{j}\}\|_{l^{2,s}}^2\delta+t^{2\gamma}\delta^3)$$
$$+Ct(\|\{\alpha_k\}\|_{l^{2,s}}+t^{\gamma}\delta)^2
$$ $$\times (\|\{\alpha_{j}\}\|_{l^{2,s}}^3+\|\{\alpha_{j}\}\|_{l^{2,s}}^5+t^{3\gamma}(1+\|\{\alpha_k\}\|_{l^{2,s}}^2)\delta^3+\|\{\alpha_{j}\}\|_{l^{2,s}}^2\delta+t^{2\gamma}\delta^3).$$

 Summarizing, we have obtained
 \begin{equation}\label{stabnonres}\|\{\Phi(\{R_k\})\}\|_{X^\gamma}\leq C(\|\{\alpha_k\}\|^3_{l^{2,s}}+T^{3\gamma}\delta^3)+CT(\|\{\alpha_k\}\|_{l^{2,s}}+T^{\gamma}\delta)^2\end{equation}
 $$\times (\|\{\alpha_{j}\}\|_{l^{2,s}}^3+\|\{\alpha_{j}\}\|_{l^{2,s}}^5+T^{3\gamma}(1+\|\{\alpha_k\}\|_{l^{2,s}}^2)\delta^3+\|\{\alpha_{j}\}\|_{l^{2,s}}^2\delta+T^{2\gamma}\delta^3)$$ 
$$+CT^{1-\gamma}(\|\{\alpha_{j}\}\|_{l^{2,s}}^3+\|\{\alpha_{j}\}\|_{l^{2,s}}^5+T^{3\gamma}(1+\|\{\alpha_k\}\|_{l^{2,s}}^2)\delta^3+\|\{\alpha_{j}\}\|_{l^{2,s}}^2\delta+T^{2\gamma}\delta^3)$$
$$+CT^{1-\gamma}(\|\{\alpha_k\}\|_{l^{2,s}}+T^{\gamma}\delta)^2$$ 
$$\times (\|\{\alpha_{j}\}\|_{l^{2,s}}^3+\|\{\alpha_{j}\}\|_{l^{2,s}}^5+T^{3\gamma}(1+\|\{\alpha_k\}\|_{l^{2,s}}^2)\delta^3+\|\{\alpha_{j}\}\|_{l^{2,s}}^2\delta+T^{2\gamma}\delta^3).$$
 In view of \eqref{stabnonres}, we can choose $\delta$ in terms of $\|\{\alpha_{j}\}\|_{l^{2,s}}$, and  $T$ small with respect to $\|\{\alpha_{j}\}\|_{l^{2,s}}$ and $\gamma$, to obtain the stability estimate
$$\|\{\Phi(\{R_k\})\}\|_{X^\gamma}<\delta.$$
The contraction estimate is obtained in the same way as the stability one. 
As a conclusion the fixed point argument is closed and this settles the local in time existence of the solutions of Theorem \ref{thdiracs}. 

\begin{rem}\label{remlow}
We notice that in \eqref{05} we just upper-bounded the inverse of the non-resonant phase by $1$. One can actually exploit this decay in the discrete summations to relax the assumptions on the initial data. More precisely, for $1\leq p<\infty$ one can use:
$$\left\|\sum_{(j_1,j_2,j_3)\in NR_k}\frac{M_{j_1}N_{j_2}P_{j_3}}{k^2-j_1^2+j_2^2-j_3^2}\right\|_{l^p}^p=\sum_k\left(\sum_{j_1,j_2; j_1\notin\{k,j_2\}}\frac{M_{j_1}N_{j_2}P_{k-j_1+j_2}}{|j_1-j_2||k-j_1|}\right)^p$$
$$\leq C\sum_k\left(\sum_{j_1,j_2}|M_{j_1}|^p|M_{j_2}|^p|M_{k-j_1+j_2}|^p\right)\left(\sum_{j_1,j_2}\frac 1{(1+|j_1-j_2|)^q(1+|k-j_1|)^q}\right)^\frac pq,$$
where $q$ is the conjugate exponent of $p$. As $1\leq p<\infty$ we have $q>1$ so
$$\left\|\sum_{(j_1,j_2,j_3)\in NR_k}\frac{M_{j_1}N_{j_2}P_{j_3}}{k^2-j_1^2+j_2^2-j_3^2}\right\|_{l^p}\leq \|\{M_j\}\|_{l^p}\|\{N_j\}\|_{l^p}\|\{P_j\}\|_{l^p}.$$

\end{rem}

 \subsection{Global in time extension} 
We consider the local in time solution constructed previously. In the case $s= 1$ we shall prove that the growth of $\|\{\alpha_j+R_j(t)\}\|_{L^\infty(0,T) l^{2,1}}$ is controlled, so we can extend the solution globally in time. Global existence for $s>1$ is obtained by considering the $l^{2,1}$ global solution and proving the persistency of the regularity $l^{2,s}$.\\

We shall use \eqref{cons} with $a(k)=k^2$ to get a control of the ``energy":
 $$\partial_t \sum_k k^2|A_k(t)|^2=\mp\frac{1}{16\pi t}\sum_{k-j_1+j_2-j_3=0}(k^2-j_1^2+j_2^2-j_3^2)e^{-i\frac{k^2-j_1^2+j_2^2-j_3^2}{4t}}A_{j_1}(t)\overline{A_{j_2}(t)}A_{j_3}(t)\overline{A_k(t)}$$
 $$=\pm\frac{it}{4\pi }\sum_{k-j_1+j_2-j_3=0}\partial_t\left(e^{-i\frac{k^2-j_1^2+j_2^2-j_3^2}{4t}}\right)A_{j_1}(t)\overline{A_{j_2}(t)}A_{j_3}(t)\overline{A_k(t)}.$$
By integrating from $0$ to $t$ and then using integrations by parts we get
 $$\sum_k k^2|A_k(t)|^2\leq \sum_k k^2|A_k(0)|^2+Ct\sum_{k-j_1+j_2-j_3=0}|A_{j_1}(t)A_{j_2}(t)A_{j_3}(t)A_k(t)|$$
 $$+ C\int_0^t\sum_{k-j_1+j_2-j_3=0}|\partial_\tau(\tau A_{j_1}(\tau)\overline{A_{j_2}(\tau)}A_{j_3}(\tau)\overline{A_k(\tau))}|d\tau$$
 $$\leq \|\{\alpha_j\}\|_{l^{2,1}}^2+Ct\sum_k (|A_j(t)|\star |A_j(t)|\star |A_j(t)|)(k)|A_k(t)|+ \int_0^t\sum_k (|A_j(\tau)|\star |A_j(\tau)|\star |A_j(\tau)|)(k)|A_k(\tau)|d\tau$$
 $$+ \int_0^t\sum_k (\partial_\tau |A_j(\tau)|\star |A_j(\tau)|\star |A_j(\tau)|)(k)|A_k(\tau)|\tau d\tau+ \int_0^t\sum_k (|A_j(\tau)|\star |A_j(\tau)|\star |A_j(\tau)|)(k)\partial_\tau |A_k(\tau)|\tau d\tau.$$
 We shall use now the following estimate, based on Cauchy-Schwartz inequality, Young and H\"older estimates for weak $l^p$ spaces, and the fact that $\{j^{-\frac 12}\}\in l^2_w$:
 $$|\sum_k \{M_j\}\star \{{N_j}\}\star\{P_j\}(k) R_k|\leq \|\{M_j\}\star \{{N_j}\}\star\{P_j\}\|_{l^2}\|R_j\|_{l^2}\leq C\|\{M_j\}\|_{l^1_w}\|\{{N_j}\}\|_{l^1_w}\|\{P_j\}\|_{l^2}\|R_j\|_{l^2}$$
 $$\leq C\|\{M_j\, j^\frac 12\}\|_{l^2_w}\|\{{N_j\,j^\frac 12}\}\|_{l^2_w}\|\{P_j\}\|_{l^2}\|R_j\|_{l^2}$$
 $$\leq C\|\{M_j\}\|_{l^2}^\frac 12\|\{M_j\}\|_{l^{2,1}}^\frac 12\|\{N_j\}\|_{l^2}^\frac 12\|\{N_j\}\|_{l^{2,1}}^\frac 12\{P_j\}\|_{l^2}\|R_j\|_{l^2}$$
 to obtain
 $$\|\{A_j(t)\}\|_{l^{2,1}}^2\leq  \|\{\alpha_j\}\|_{l^{2,1}}^2+Ct\|\{A_j(t)\}\|_{l^2}^3\|\{A_j(t)\}\|_{l^{2,1}}$$
 $$+\int_0^t \|\{A_j(\tau)\}\|_{l^2}^3\|\{A_j(\tau)\}\|_{l^{2,1}}d\tau$$
 $$+ \int_0^t \|\{\partial_\tau A_j(\tau)\}\|_{l^2}\|\{A_j(\tau)\}\|_{l^2}^2\|\{A_j(\tau)\}\|_{l^{2,1}}\tau d\tau.$$
 Now we notice that for system \eqref{2corners0} we get
 $$\|\{\partial_\tau A_j(t)\}\|_{l^2}\leq \frac{C}{t} (\|\{A_j(t)\}\star \{A_j(t)\}\star\{A_j(t)\}\|_{l^2}+\|\{A_j(t)\}\|_{l^2})$$
 $$\leq \frac{C}{t} (\|\{A_j(t)\}\|_{l^{2,1}}\|\{A_j(t)\}\|_{l^2}^2+\|\{A_j(t)\}\|_{l^2}).$$ 
 By using also the conservation of ``mass" \eqref{mass} we finally obtain
 $$\|\{A_j(t)\}\|_{l^{2,1}}^2\leq  \|\{\alpha_j\}\|_{l^{2,1}}^2+Ct\|\{\alpha_j\}\|_{l^2}^3\|\{A_j(t)\}\|_{l^{2,1}}$$
 $$+\int_0^t \|\{\alpha_j\}\|_{l^2}^3\|\{A_j(\tau)\}\|_{l^{2,1}}d\tau+ \int_0^t \|\{\alpha_j\}\|_{l^2}^4\|\{A_j(\tau)\}\|_{l^{2,1}}^2d\tau.$$
We thus obtain by Gr\"onwall's inequality a control of the growth of $\|A_j(t)\|_{l^{2,1}}$, so the local solution can be extended globally and the proof of Theorem \eqref{thdiracs} is finished.

\subsection{Cases of Dirac masses not necessary located at integer numbers}

Some cases of Dirac masses, not necessary located at integer numbers, were treated in \cite{K} and can be extended here to the cubic case. We denote for doubly indexed sequences
$$ \|\{\alpha_{k,\tilde k}\}\|^2_{l^{2,s}}:=\sum_{k,\tilde k\in\mathbb Z}(1+|k|+|\tilde k|)^{2s}|\alpha_{k,\tilde k}|^2.$$
We note that a distribution $f=\sum_{k\in\mathbb Z}\alpha_{k,\tilde k}\delta_{ak+b\tilde k}$ satisfies
$$\hat f(\xi)=\widehat{\sum_{k\in\mathbb Z}\alpha_{k,\tilde k}\delta_{ak+b\tilde k}}(\xi)=\sum_{k\in\mathbb Z}\alpha_{k,\tilde k}e^{-i\xi(ak+b\tilde k)},$$
that can be seen as the restriction to $\xi_1=\xi_2=\xi$ of 
$$\sum_{k\in\mathbb Z}\alpha_{k,\tilde k}e^{-i\xi_1ak-i\xi_2b\tilde k},$$
which is the Fourier transform of
$$E(f):=\sum_{k\in\mathbb Z}\alpha_{k,\tilde k}\delta_{(ak,b\tilde k)}.$$
We denote
$$H^s_{pF;a,b}:=\{u\in\mathcal S'(\mathbb R^2),\,\, \hat u(\xi_1+\frac{2\pi}{a},\xi_2)= \hat u(\xi_1,\xi_2+\frac{2\pi}{b})=\hat u(\xi_1,\xi_2), \hat u\in H^s((0,\frac{2\pi}{a})\times (0,\frac{2\pi}{b}))\},$$
and $$\|f\|_{H^{s,diag}_{pF;a,b}}=\|\widehat {E(f)}\|_{H^s((0,\frac{2\pi}{a})\times (0,\frac{2\pi}{b}))}.$$

\begin{theorem}\label{thdiracs3} 
Let $s>\frac 12$, $T>0$ and $\frac 12<\gamma<1$.  Let $a,b\in\mathbb R^*$ such that $\frac ab\notin\mathbb Q$. We consider the 1-D cubic NLS equation:
\begin{equation}\label{cubicmod3}i\partial_t u +\Delta u\pm\frac 12(|u|^2-\frac{M}{2\pi t})u=0.
\end{equation}
with $M=\sum_{k,\tilde k\in\mathbb Z}|\alpha_{k,\tilde k}|^2$ and $\#\{(k,\tilde k),\alpha_{k,\tilde k}\neq 0\}<\infty $. There exists $\epsilon_0>0$ such that if $\|\{\alpha_{k,\tilde k}\}\|_{l^{2,s}}\leq \epsilon_0$ then we have $T>0$ and a unique solution on $(0,T)$ of the form  
\begin{equation}\label{ansatzcubic3}u(t)=\sum_{k,\tilde k\in\mathbb Z}e^{\mp i\frac{|\alpha_{k,\tilde k}|^2}{4\pi}\log \sqrt{t}}(\alpha_{k,\tilde k}+R_{k,\tilde k}(t))e^{it\Delta}\delta_{ak+b\tilde k},\end{equation}
with the decay 
\begin{equation}\label{decayansatzcubic3}\sup_{0<t<T}t^{-\gamma}\|\{R_{k,\tilde k}(t)\}\|_{l^{2,s}}+t\,\|\{\partial_t R_{k,\tilde k}(t)\}\|_{l^{2,s}}<C.\end{equation}
Moreover, considering an initial data a finite sum of N Dirac masses
$$u(0)=\sum_{k\in\mathbb Z}\alpha_{k,\tilde k}\delta_{ak+b\tilde k},$$
with coefficients of same modulus $|\alpha_{k,\tilde k}|=a$ and equation \eqref{cubicmod3} normalized with $M=(N-\frac 12)a^2$, we have a unique solution on $(-T,T)$
$$u(t)=e^{it\Delta}u(0)\pm ie^{it\Delta}\int_0^te^{-i\tau\Delta}\left(\left(|u(\tau)|^2-\frac{M}{2\pi \tau}\right)u(\tau)\right)\,\frac{d\tau}2,$$
such that $\widehat{E(e^{-it\Delta}u(t))}\in\mathcal C^1((-T,T),H^s((0,\frac{2\pi}{a})\times (0,\frac{2\pi}{b})))$ with 
$$\|e^{-it\Delta}u(t)-u(0)\|_{H^{s,diag}_{pF;a,b}}\leq C t^\gamma, \quad\forall t\in(-T,T).$$

\end{theorem}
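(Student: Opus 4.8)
The plan is to run the very same fixed point scheme that proves Theorem~\ref{thdiracs}, with the integer lattice replaced by the set of positions $\{ak+b\tilde k\}$ and the singly indexed coefficients $\{A_k(t)\}$ replaced by the doubly indexed $\{A_{k,\tilde k}(t)\}$. The first thing to record is the consequence of $a/b\notin\mathbb Q$: since $a,b$ are linearly independent over $\mathbb Q$, the map $(k,\tilde k)\mapsto ak+b\tilde k$ is injective, so the representation \eqref{ansatzcubic3} is unambiguous, the norm $\|\cdot\|_{H^{s,diag}_{pF;a,b}}$ is well defined through the lift $E$, and --- crucially --- the constraint produced by the cubic convolution, $ak+b\tilde k=(aj_1+b\tilde j_1)-(aj_2+b\tilde j_2)+(aj_3+b\tilde j_3)$, decouples into the two integer relations $k-j_1+j_2-j_3=0$ and $\tilde k-\tilde j_1+\tilde j_2-\tilde j_3=0$. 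This is exactly what lets the doubly indexed sequences be handled by the weighted Young inequality \eqref{young} just as in the previous subsections. To derive the discrete system I would plug $u(t,x)=\sum_{k,\tilde k}A_{k,\tilde k}(t)\,e^{it\Delta}\delta_{ak+b\tilde k}(x)$ into \eqref{cubicmod3}; since the positions are now dense in $\mathbb R$ the family $\{e^{it\Delta}\delta_{ak+b\tilde k}\}$ is no longer orthonormal on a bounded interval, so I would project instead with the almost-periodic (Bohr) mean $\lim_{L\to\infty}\frac1{2L}\int_{-L}^L$, which is diagonal on the distinct chirped characters $e^{it\Delta}\delta_{ak+b\tilde k}(x)=\frac1{\sqrt{4\pi it}}e^{ix^2/4t}e^{-ix(ak+b\tilde k)/2t}e^{i(ak+b\tilde k)^2/4t}$ and plays the role of the $L^2(0,4\pi t)$ scalar product.

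This produces the exact analogue of \eqref{2corners1}--\eqref{systtilde}, now with the non-resonant phase $\Phi=(ak+b\tilde k)^2-(aj_1+b\tilde j_1)^2+(aj_2+b\tilde j_2)^2-(aj_3+b\tilde j_3)^2$. Using the identity $\Phi=2\big((ak+b\tilde k)-(aj_1+b\tilde j_1)\big)\big((aj_1+b\tilde j_1)-(aj_2+b\tilde j_2)\big)$ together with injectivity, the resonant set is characterised by $\{(k,\tilde k)=(j_1,\tilde j_1)\}\cup\{(j_1,\tilde j_1)=(j_2,\tilde j_2)\}$, exactly as in the integer case. The resonant part is again absorbed by the phase modulation $e^{\mp i|\alpha_{k,\tilde k}|^2\log\sqrt t/4\pi}$, and I would set up the same contraction for $\{R_{k,\tilde k}\}$ in the ball of radius $\delta$ of $X^\gamma$, seeking the analogues of the estimates \eqref{gk}--\eqref{gkintbis}.

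The main obstacle, and the only real difference with Theorem~\ref{thdiracs}, is that the denominator $\Phi$ is now an arbitrary nonzero real, so the bound $|\Phi|\ge1$ that was used right after the integration by parts \eqref{05} is lost; worse, because the positions $\{ak+b\tilde k\}$ are dense, for spread-out indices $(k,\tilde k)$ one has $ak+b\tilde k$ arbitrarily close to a fixed active position, producing arbitrarily small $\Phi$. This is where the two extra hypotheses are used. Because $\{\alpha_{k,\tilde k}\}$ has \emph{finite} support, the only contribution to $g_k$ that is genuinely non-integrable in time at $\tau=0$ is the purely cubic term $\frac1\tau\sum_{NR_k}e^{-i\Phi/4\tau}e^{-i\omega\log\sqrt\tau}\alpha_{j_1}\overline{\alpha_{j_2}}\alpha_{j_3}$; this is a finite sum over a fixed configuration, on which $|\Phi|\ge c_0>0$ with $c_0$ depending only on $a,b$ and on the finitely many active positions, so the integration by parts of \eqref{05} may be performed on this term alone and merely contributes a harmless factor $1/c_0$. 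Every remaining contribution carries at least one factor $R$, for which integration by parts is unavailable precisely because of the small denominators just described; these are therefore estimated directly, using the weight $\|\{R_{k,\tilde k}(\tau)\}\|_{l^{2,s}}\le\tau^{\gamma}\delta$. The threshold $\gamma>\frac12$ is what makes all such $R$-dependent time integrals converge at $\tau=0$, in particular the doubly integrated term of $\Phi_k$ after the Cauchy--Schwarz step; and the smallness $\|\{\alpha_{k,\tilde k}\}\|_{l^{2,s}}\le\epsilon_0$ is forced because, lacking integration by parts, the $\alpha^2R$-type contributions land in $X^\gamma$ with a constant of the form $C\|\{\alpha_{k,\tilde k}\}\|_{l^{2,s}}^2$ that gains no positive power of $T$, so $\epsilon_0$ (allowed to depend on $c_0$) must be taken small enough that $C\epsilon_0^2<1$.

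With these two inputs the analogues of the stability estimate \eqref{stabnonres} and of the contraction estimate close after choosing $\delta\sim c_0^{-1}\|\{\alpha_{k,\tilde k}\}\|_{l^{2,s}}^3$ and $T$ small, which gives both existence and uniqueness in the class \eqref{ansatzcubic3}--\eqref{decayansatzcubic3}. Finally, transferring through the isometry $f\mapsto\widehat{E(f)}$, the bound \eqref{decayansatzcubic3} becomes $\|e^{-it\Delta}u(t)-u(0)\|_{H^{s,diag}_{pF;a,b}}=\|\{R_{k,\tilde k}(t)\}\|_{l^{2,s}}\le Ct^\gamma$, while the $t\,\partial_tR_{k,\tilde k}$ bound of \eqref{decayansatzcubic3} yields $\widehat{E(e^{-it\Delta}u(t))}\in\mathcal C^1\big((-T,T),H^s((0,\tfrac{2\pi}{a})\times(0,\tfrac{2\pi}{b}))\big)$. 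In the equal-modulus case $|\alpha_{k,\tilde k}|=a$ with $M=(N-\tfrac12)a^2$ the modulation exponent $\omega$ vanishes identically and one works directly with $A_{k,\tilde k}=\alpha_{k,\tilde k}+R_{k,\tilde k}$, so the same fixed point runs verbatim; the two-sided-in-time statement on $(-T,T)$ then follows from the time reversibility of \eqref{cubicmod3}, exactly as for Theorem~\ref{thdiracs}.
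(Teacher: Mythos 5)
Your proposal is correct and follows essentially the same route as the paper: the same doubly-indexed fixed point, with the key observations that the finite support of $\{\alpha_{k,\tilde k}\}$ permits integration by parts only on the free cubic term (where the non-resonant phase is bounded below), that the remaining $R$-dependent terms are integrated directly using $\gamma>\frac12$, and that the terms linear in $R$ force the smallness condition on the data. Your use of the Bohr mean to make the "orthogonality" of the chirps $e^{it\Delta}\delta_{ak+b\tilde k}$ precise, and your choice to contract in $X^\gamma$ rather than obtaining the derivative bound a posteriori, are only cosmetic differences from the paper's argument.
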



The new phenomenon here is that if for instance the initial data is the sum of three Dirac masses located at $0,a$ and $b$ then we see small effects on the dense subset on $\mathbb R$ given by the group $a\mathbb Z+ b\mathbb Z$. Another difference with respect to the previous case is that the non-resonant phases can approach zero so we shall perform integration by parts from the phase only on the free term. Due to this small divisor problem we impose on one hand only a finite number of Dirac masses at time $t=0$, and on the other hand a smallness condition on the data.

The proof of Theorem \ref{thdiracs3} goes similarly to the one of Theorem \ref{thdiracs}, by plugging the ansatz \eqref{ansatzcubic3} into equation \eqref{cubicmod3} to get by using the orthogonality of the family $\{\frac{e^{i\frac{(x-ak-b\tilde k)^2}{4t}}}{\sqrt{4\pi it}}\}$ the associated system 
$$i\partial_t A_{k,\tilde k}(t)$$
$$=\mp\frac{1}{8\pi t}\sum_{((j_1,\tilde{j_1}),(j_2,\tilde{j_2}),(j_3,\tilde{j_3}))\in NR_{k,\tilde k}}e^{-i\frac{(ka+\tilde kb)^2-(j_1a+\tilde{j_1}b)^2+(j_2a+\tilde{j_2}b)^2-(j_3a+\tilde{j_3}b)^2}{4t}}A_{j_1,\tilde{j_1}}(t)\overline{A_{j_2,\tilde{j_2}}(t)}A_{j_3,\tilde{j_3}}(t)$$
$$\pm\frac1{8\pi t}A_{k,\tilde k}(t)(2\sum_{j,\tilde j}|A_{j,\tilde j}(t)|^2-|A_{k,\tilde k}(t)|^2-2M),$$
where $NR_{k,\tilde k}$ is the set of indices such that the phase does not vanish i.e. such that $k-j_1+j_2-j_3=0, \tilde k-\tilde{j_1}+\tilde{j_2}-\tilde{j_3}=0$,  $k^2-j^2_1+j^2_2-j^2_3\neq0,$ and $\tilde k-\tilde{j_1}+\tilde{j_2}-\tilde{j_3}\neq0$. 
We have to solve the equivalent ``mass"-conserving system:
\begin{equation}\label{2cornersQ1}i\partial_t A_{k,\tilde k}(t)$$
$$=\mp\frac{1}{8\pi t}\sum_{((j_1,\tilde{j_1}),(j_2,\tilde{j_2}),(j_3,\tilde{j_3}))\in NR_{k,\tilde k}}e^{-i\frac{(ka+\tilde kb)^2-(j_1a+\tilde{j_1}b)^2+(j_2a+\tilde{j_2}b)^2-(j_3a+\tilde{j_3}b)^2}{4t}}A_{j_1,\tilde{j_1}}(t)\overline{A_{j_2,\tilde{j_2}}(t)}A_{j_3,\tilde{j_3}}(t)\end{equation}
$$\mp\frac1{8\pi t}|A_{k,\tilde k}(t)|^2A_{k,\tilde k}(t). $$
We look for solutions of the form $A_{k,\tilde k}(t)=e^{\mp i\frac{|\alpha_{k,\tilde k}|^2}{4\pi}\log \sqrt{t}}(\alpha_{k,\tilde k}+R_{k,\tilde k}(t))$, with $\{R_{k,\tilde k}\}\in Y^\gamma$:
\begin{equation}\label{decayQ} Y^\gamma:=\{\{f_{k,\tilde k}\}\in\mathcal C((0,T);l^{2,s})\}.\end{equation}
As for Theorem \ref{thdiracs}, we make a fixed point argument in a ball of $Y^\gamma$ of size depending on $\|\{\alpha_{k,\tilde k}\}\|_{l^{2,s}}$ for the operator $\Phi$ sending $\{R_{k,\tilde k}\}$ into 
$$\Phi(\{R_{k,\tilde k}\})=\{\Phi_{k,\tilde k}(\{R_{j,\tilde j}\})\},$$
with
$$\Phi_{k,\tilde k}(\{R_{j,\tilde j}\}(t))=\mp i\int_0^t f_{k,\tilde k}(\tau)\,d\tau\pm i\int_0^t\int_0^\tau\Im(f_{k,\tilde k}(s)\overline{(\alpha_{k,\tilde k}+R_{k,\tilde k}(s))}ds(\alpha_{k,\tilde k}+R_{k,\tilde k}(\tau))\frac{d\tau}{4\pi t},$$ \\
where
$$ f_{k,\tilde k}(t)=\sum_{((j_1,\tilde{j_1}),(j_2,\tilde{j_2}),(j_3,\tilde{j_3}))\in NR_{k,\tilde k}}\frac{e^{-i\frac{(ka+\tilde kb)^2-(j_1a+\tilde{j_1}b)^2+(j_2a+\tilde{j_2}b)^2-(j_3a+\tilde{j_3}b)^2}{4t}}}{8\pi t}$$
$$\times e^{-i\frac{|\alpha_{k,\tilde k}|^2-|\alpha_{j_1,\tilde j_1}|^2+|\alpha_{j_2,\tilde j_2}|^2-|\alpha_{j_3,\tilde j_3}|^2}{4\pi}\log t}(\alpha_{j_1,\tilde {j_1}}+R_{j_1,\tilde {j_1}}(t))\overline{(\alpha_{j_2,\tilde {j_2}}+R_{j_2,\tilde {j_2}}(t))}(\alpha_{j_3,\tilde {j_3}}+R_{j_3,\tilde {j_3}}(t)).$$
To avoid issues related to having the non-resonant phase approaching zero, we perform integrations by parts only in the free term involving a finite number of terms, as $\#\{(k,\tilde k),\alpha_{k,\tilde k}\neq 0\}<\infty $. All the remaining terms contain powers of $R_{j,\tilde j}(\tau)$ so we get integrability in time by using the Young inequalities \eqref{young} for double indexed sequences. However, due to the presence of terms linear in $R_{j,\tilde j}(\tau)$ we need to impose a smallness condition on the initial data $\|\{\alpha_{j,\tilde j}\}\|_{l^{2,s}}$. Moreover, from the cubic terms treated without integrations by parts as previously, we need to impose $\gamma>\frac 12$. The control of $\|\{t\partial_t R_{k,\tilde k}(t)\}\|_{L^\infty(0,T) l^{2,s}}$ is easily obtained a-posteriori, once a solution is constructed in $Y^\gamma$.



\section{The Talbot effect}\label{sectTalbot}
The Talbot effect for the linear and nonlinear  Schr\"odinger equations on the torus with initial data given by functions with bounded variation has been largely studied (\cite{Be},\cite{Os},\cite{Ro},\cite{Ta},\\ \cite{ET},\cite{ChErTz}). Here we place ourselves in a more singular setting on $\mathbb R$, and get closer to the Talbot effect observed in optics (see for example \cite{BeKl}) which is typically modeled with Dirac combs as we consider in this paper.

As a consequence of Theorem \ref{thdiracs} the solution $u(t)$ of equation \eqref{cubicmod} with initial data 
$$u(0)=\sum_{k\in\mathbb Z}\alpha_k\delta_k,$$
with $|\alpha_k|=a$ behaves for small times like $e^{it\Delta}u_0$.  We compute first the linear evolution $e^{it\Delta}u_0$ which display a Talbot effect.

\begin{prop}\label{thTalbot}{\bf{(Talbot effect for linear evolutions)}} Let $p\in\mathbb N$ and $u_0$ with $\hat{u_0}$ $2\pi-$periodic. For all $t_{p,q}=\frac 1{2\pi}\frac pq$ with $q$ odd and for all $x\in\mathbb R$ we have
\begin{equation}\label{repr1}e^{it_{p,q}\Delta}u_0(x)=\frac {1}{\sqrt{q}}\int_0^{2\pi}\hat{u_0}(\xi)e^{-it_{p,q}\xi^2+ix\xi}\sum_{l\in\mathbb Z}\sum_{m=0}^{q-1}e^{i\theta_{m,p,q}}\delta(x-2t_{p,q}\,\xi-l-\frac mq)\,d\xi,\end{equation}
for some $\theta_{m,p,q}\in\mathbb R$. 
We suppose now that moreover $\hat u_0$ is located modulo $2\pi$ only in a neighborhood of zero of radius less than $\eta \frac {\pi}{ p}$ with $0<\eta<1$. For a given $x\in\mathbb R$ we define
$$\xi_{x} :=\frac {\pi q}p \,dist\left (x,\frac1q\mathbb Z\right) \in[0,\frac \pi p).$$
Then there exists $\theta_{x,p,q}\in\mathbb R$ such that
\begin{equation}\label{repr2}e^{it_{p,q}\Delta}u_0(x)=\frac {1}{\sqrt{q}} \,\hat{u_0}(\xi_x)\, e^{-it_{p,q}\,\xi_x^2+ix\,\xi_x+i\theta_{x,p,q}}.\end{equation}
In particular $|e^{it_{p,q}\Delta}u_0(l+\frac mq)|=|e^{it_{p,q}\Delta}u_0(0)|$ and if $x$ is at distance larger than $\frac \eta{q}$ from $\frac1q\mathbb Z$ then $e^{it_{p,q}\Delta}u_0(x)$ vanishes. 

Moreover, the solution can concentrate near $\frac1q\mathbb Z$ in the sense that there is a family of initial data $u_0^\lambda=\sum _{k\in\mathbb Z}\alpha_k^\lambda\delta_k$ and $C>0$ such that 
\begin{equation}\label{conc}
\left|\frac{e^{it_{p,q}\Delta}u_0^\lambda(0)}{e^{it_{p,q}\Delta}\alpha_0^\lambda\delta_0(0)}\right|\overset{\lambda\rightarrow\infty}{\longrightarrow}\infty.
\end{equation}
\end{prop}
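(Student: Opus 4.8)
The plan is to reduce everything, at the rational time $t_{p,q}$, to a quadratic exponential (Gauss) sum, and then to read off the three assertions from it. First I would write the free evolution as an oscillatory integral and exploit the $2\pi$–periodicity of $\hat u_0$. Since $\widehat{e^{it\Delta}u_0}(\xi)=e^{-it\xi^2}\hat u_0(\xi)$, folding the inverse transform onto $[0,2\pi)$ gives, up to the Fourier normalization constant $c_0$,
$$e^{it\Delta}u_0(x)=c_0\int_0^{2\pi}\hat u_0(\xi)\sum_{n\in\mathbb Z}e^{-it(\xi+2\pi n)^2+ix(\xi+2\pi n)}\,d\xi,$$
the $n$–sum being understood distributionally (a genuine finite computation when $u_0$ is a finite sum of Diracs, i.e. $\hat u_0$ a trigonometric polynomial, extended to $\hat u_0\in H^s$, $s>\tfrac12$). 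At $t=t_{p,q}=\frac p{2\pi q}$ the $n$–dependent phase is $-t_{p,q}(4\pi n\xi+4\pi^2n^2)+2\pi nx$; factoring out $e^{-it_{p,q}\xi^2+ix\xi}$ and using $2t_{p,q}=\frac p{\pi q}$ reduces the inner sum to $\sum_{n}e^{-i\frac{2\pi pn^2}{q}}e^{i2\pi n y}$ with $y:=x-2t_{p,q}\xi$.

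Next I would evaluate this sum. The coefficient $e^{-i2\pi pn^2/q}$ is $q$–periodic in $n$, so splitting $n=qj+m$ with $0\le m\le q-1$ and applying Poisson summation $\sum_j e^{i2\pi qjy}=\frac1q\sum_l\delta(y-\frac lq)$ turns the $j$–sum into a Dirac comb on $\frac1q\mathbb Z$, while the $m$–sum produces the Gauss sum $G(l)=\sum_{m=0}^{q-1}e^{-i2\pi pm^2/q}e^{i2\pi ml/q}$. Here lies the crux: for $q$ odd and $\gcd(p,q)=1$ one has $\gcd(2p,q)=1$, so completing the square modulo $q$ reduces $G(l)$ to a classical Gauss sum and yields $|G(l)|=\sqrt q$ for \emph{every} $l$, with a phase $\theta_{m,p,q}$ depending only on $m=l\bmod q$. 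The oddness of $q$ is essential, since for $q$ even these sums may have a different modulus or vanish, destroying the clean revival. Collecting the Poisson factor $\frac1q$, the Gauss modulus $\sqrt q$ and $c_0$ into the constant $\frac1{\sqrt q}$, and re-indexing $l=qL+m$, gives exactly \eqref{repr1}.

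For the localized statement I would feed the support hypothesis into \eqref{repr1}. The delta forces $x-2t_{p,q}\xi\in\frac1q\mathbb Z$, i.e. $\xi\in\frac{\pi q}p x-\frac\pi p\mathbb Z$; these admissible frequencies are spaced by $\frac\pi p$ modulo $2\pi$, and the one nearest to $0$ is $\xi_x=\frac{\pi q}p\,\mathrm{dist}(x,\frac1q\mathbb Z)$, which satisfies $\xi_x\le\frac{\pi}{2p}$. Since $\hat u_0$ is supported within $\eta\frac\pi p<\frac\pi p$ of $0$ modulo $2\pi$, only the single frequency $\xi_x$ can lie in the support, and it does so precisely when $\xi_x<\eta\frac\pi p$, i.e. $\mathrm{dist}(x,\frac1q\mathbb Z)<\frac\eta q$. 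Keeping that one surviving term gives \eqref{repr2}; if $\mathrm{dist}(x,\frac1q\mathbb Z)>\frac\eta q$ then $\xi_x>\eta\frac\pi p$ lies outside the support, no term survives, and $e^{it_{p,q}\Delta}u_0(x)=0$. Evaluating \eqref{repr2} at any $x=l+\frac mq\in\frac1q\mathbb Z$ gives $\xi_x=0$, hence $|e^{it_{p,q}\Delta}u_0(x)|=\frac1{\sqrt q}|\hat u_0(0)|$ independent of the node, which is the claimed periodicity of the modulus.

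Finally, for the concentration \eqref{conc} I would use two explicit values at $x=0$. From the kernel $e^{it\Delta}\delta_0(0)=(4\pi it)^{-1/2}$ one has $|e^{it_{p,q}\Delta}\alpha_0^\lambda\delta_0(0)|=|\alpha_0^\lambda|/\sqrt{4\pi t_{p,q}}$, while $x=0\in\frac1q\mathbb Z$ gives $\xi_0=0$ and, by \eqref{repr2}, $|e^{it_{p,q}\Delta}u_0^\lambda(0)|=\frac1{\sqrt q}|\hat u_0^\lambda(0)|=\frac1{\sqrt q}\big|\sum_k\alpha_k^\lambda\big|$. Thus the ratio in \eqref{conc} is comparable to $\big|\sum_k\alpha_k^\lambda\big|/|\alpha_0^\lambda|$, and it suffices to take $u_0^\lambda$ with $\hat u_0^\lambda$ a $2\pi$–periodic approximate identity concentrating at $0$ inside the window of radius $\eta\frac\pi p$: then $\hat u_0^\lambda(0)=\sum_k\alpha_k^\lambda\to\infty$ while $\alpha_0^\lambda=\frac1{2\pi}\int_0^{2\pi}\hat u_0^\lambda$ stays bounded, forcing the ratio to infinity. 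I expect the genuine difficulty to be the uniform evaluation $|G(l)|=\sqrt q$ (the number-theoretic heart, where the parity of $q$ enters) together with the careful bookkeeping in \eqref{repr1}–\eqref{repr2} guaranteeing that the localization selects exactly one frequency; the distributional justification of the Poisson step for merely $H^s$ data is a secondary technical point.
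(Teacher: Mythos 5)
Your proposal is correct and follows essentially the same route as the paper: fold the inverse Fourier integral onto $[0,2\pi)$ using the periodicity of $\hat u_0$, reduce the resulting lattice sum at $t_{p,q}$ to the Talbot/Gauss-sum identity with $|G|=\sqrt q$ for $q$ odd, deduce the single-frequency selection under the support hypothesis, and exhibit concentration via a rescaled bump $\lambda^{\beta}\psi(\lambda\xi)$ in Fourier space. The only difference is that you derive the Dirac-comb Talbot formula directly by Poisson summation and completing the square, where the paper cites it from \cite{DHV}.
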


We note here that the data $u_0=\sum _{k\in\mathbb Z}\delta_k$ enter the above setting of the $2\pi-$periodicity in Fourier and localization in Fourier, as $\hat{u_0}=u_0=\sum _{k\in\mathbb Z}\delta_k$ (see last section where Poisson summation formula is recalled). Therefore $e^{it\Delta}u_0(x)=0$ for $x\notin \frac1q\mathbb Z$, and is a Dirac mass otherwise, which is the classical Talbot effect. However, due to the non-summation of its coefficients, this kind of data does not enter our nonlinear theorem framework. Nevertheless, the concentration phenomena \eqref{conc} is obtained by taking a sequence of initial data $\{u_0^\lambda\}$ that concentrates in Fourier variable near integers.

Proposition \ref{thTalbot} insures the persistence of the Talbot effect at the nonlinear level. 

\begin{prop}\label{proptalbotnl}{\bf{(Talbot effect for nonlinear evolutions)}} Let $p\in\mathbb N$, $\epsilon\in (0,1)$ and $q_\epsilon$ such that  $\epsilon^2\sqrt{q_\epsilon}\log q_\epsilon<\frac 12$; in particular  $q_\epsilon\overset{\epsilon\rightarrow 0}{\longrightarrow}+\infty$.

Let $u_0$ be such that $\hat u_0$ is a $2\pi-$periodic, located modulo $2\pi$ only in a neighborhood of zero of radius less than $\eta \frac {\pi}{ p}$ with $0<\eta<1$ and having Fourier coefficients such that $\|\{\alpha_k\}\|_{l^{2,s}}\leq \epsilon$ for some $s>\frac 12$. Let $u(t,x)$ be the solution of \eqref{cubicmod} obtained in Theorem \ref{thdiracs} from $\{\alpha_k\}$. Then for all $t_{p,q}=\frac 1{2\pi}\frac pq$ with $1\leq q\leq q_\epsilon$ odd and for all $x$ at distance larger than $\frac \eta{q}$ from $\frac1q\mathbb Z$ the function $u(t,x)$ almost vanishes in the sense:
\begin{equation}\label{vanishing}|u(t_{p,q},x)|\leq \epsilon.\end{equation}

Moreover, the solution can concentrate near $\frac1q\mathbb Z$ in the sense that there is a family of sequences $\{\alpha_k^\lambda\}$ with $\|\{\alpha_k^\lambda\}\|_{l^{2,s}}\overset{\lambda\rightarrow \infty}{\longrightarrow}0$ such that the corresponding solutions $u_\lambda$ obtained in Theorem \ref{thdiracs} satisfy
\begin{equation}\label{concnonlin}
\left|\frac{u^\lambda(t_{p,q},0)}{e^{it_{p,q}\Delta}\alpha_0^\lambda\delta_0(0)}\right|\overset{\lambda\rightarrow\infty}{\longrightarrow}\infty.
\end{equation}
\end{prop}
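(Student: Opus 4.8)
The plan is to bootstrap the nonlinear statement from the linear Talbot effect of Proposition \ref{thTalbot} by controlling the difference between the genuine solution $u(t)$ and its linear evolution $e^{it\Delta}u_0$. By Theorem \ref{thdiracs} the solution has the form \eqref{ansatzfin} with remainders $R_k$ satisfying the decay \eqref{decayansatzcubic}. First I would write
$$u(t,x)=\sum_{k\in\mathbb Z}e^{\mp i\frac{|\alpha_k|^2}{4\pi}\log\sqrt t}(\alpha_k+R_k(t))\,e^{it\Delta}\delta_k(x),$$
and compare it with $e^{it\Delta}u_0(x)=\sum_k\alpha_k\,e^{it\Delta}\delta_k(x)$. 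The point is that at the rational time $t_{p,q}$ the linear part is \emph{exactly} governed by Proposition \ref{thTalbot}: since $\hat u_0$ is supported modulo $2\pi$ in a small neighborhood of the origin, formula \eqref{repr2} forces $e^{it_{p,q}\Delta}u_0(x)$ to vanish whenever $x$ lies at distance larger than $\frac{\eta}{q}$ from $\frac1q\mathbb Z$. Thus on that set the only contribution to $u(t_{p,q},x)$ comes from the perturbative terms.

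The key quantitative step is to bound the error term uniformly in $x$. Writing the phase factor $e^{\mp i\frac{|\alpha_k|^2}{4\pi}\log\sqrt t}=1+O(|\alpha_k|^2\log\frac1t)$ and using $|e^{it\Delta}\delta_k(x)|=\frac{1}{\sqrt{4\pi t}}$, I would estimate in $L^\infty_x$ by an $\ell^1_k$ bound,
$$\bigl|u(t_{p,q},x)-e^{it_{p,q}\Delta}u_0(x)\bigr|\le \frac{C}{\sqrt{t_{p,q}}}\Bigl(\sum_k|\alpha_k|^3\log\tfrac1{t_{p,q}}+\sum_k|R_k(t_{p,q})|\Bigr).$$
Here $\sqrt{t_{p,q}}=\sqrt{p/(2\pi q)}$ contributes a factor $\sqrt q$, which is precisely why the threshold condition $\epsilon^2\sqrt{q_\epsilon}\log q_\epsilon<\frac12$ is imposed: with $\|\{\alpha_k\}\|_{\ell^{2,s}}\le\epsilon$ one has $\sum_k|\alpha_k|^3\le\epsilon\sum_k|\alpha_k|^2\lesssim\epsilon\cdot\epsilon^2=\epsilon^3$ and $\|\{R_k\}\|_{\ell^1}\lesssim t_{p,q}^\gamma\delta\lesssim t_{p,q}^\gamma\epsilon^3$, and multiplying by $\sqrt q\,\log q$ keeps everything below a constant multiple of $\epsilon$. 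Combining the vanishing of the linear term with this bound yields \eqref{vanishing} on the prescribed set.

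For the concentration statement \eqref{concnonlin} I would reuse the family $\{u_0^\lambda\}$ constructed in Proposition \ref{thTalbot} for which \eqref{conc} holds, arranged so that $\|\{\alpha_k^\lambda\}\|_{\ell^{2,s}}\to0$. At $x=0$ the same difference estimate shows $u^\lambda(t_{p,q},0)=e^{it_{p,q}\Delta}u_0^\lambda(0)(1+o(1))$, because the error is lower order relative to the linear value at the concentration point; dividing by $e^{it_{p,q}\Delta}\alpha_0^\lambda\delta_0(0)$ and invoking \eqref{conc} then gives the claimed blow-up of the ratio. The main obstacle is the second step: one must verify that the factor $\sqrt q$ coming from $1/\sqrt{t_{p,q}}$ is genuinely tamed by the smallness $\epsilon$ through the $\ell^{2,s}\hookrightarrow\ell^1$ cubic gain and the time-decay of $R_k$, so that the nonlinear correction stays below $\epsilon$ uniformly for all odd $q\le q_\epsilon$ simultaneously; keeping the logarithmic phase loss under control in that uniform estimate is the delicate part.
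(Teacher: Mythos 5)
Your proposal is correct and follows essentially the same route as the paper: decompose $u(t_{p,q},\cdot)$ into the linear evolution (killed off $\frac1q\mathbb Z$ by Proposition \ref{thTalbot}) plus a phase-defect term of size $\sqrt q\log q\sum_k|\alpha_k|^3$ and a remainder term controlled via the fixed-point radius $\delta\sim\|\{\alpha_k\}\|_{l^{2,s}}^3$ and the decay $t^{\gamma}$ with $\gamma>\frac12$, with the hypothesis $\epsilon^2\sqrt{q_\epsilon}\log q_\epsilon<\frac12$ absorbing the $\sqrt q\log q$ loss; the concentration statement is likewise obtained by reusing the family $u_0^\lambda$ of Proposition \ref{thTalbot} and checking the nonlinear correction is lower order in $\lambda$ after division by $e^{it_{p,q}\Delta}\alpha_0^\lambda\delta_0(0)$. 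This matches the paper's argument step for step.
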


\subsection{Proof of Propositions \ref{thTalbot}-\ref{proptalbotnl}}
We start by recalling the Poisson summation formula $\sum _{k\in\mathbb Z}f_k=\sum _{k\in\mathbb Z}\hat f(2\pi k)$ for the Dirac comb:
$$
(\sum _{k\in\mathbb Z}\delta_k)(x)=\sum _{k\in\mathbb Z}\delta(x-k)=\sum_{k\in\mathbb Z}e^{i2\pi kx},$$
as 
$$\widehat{\delta(x-\cdot)}(2\pi k)=\int_{-\infty}^\infty e^{-i2\pi ky}\delta(x-y)\,dy=e^{-i2\pi kx}.$$

The computation of the free evolution with periodic Dirac data is
\begin{equation}\label{perevol}e^{it\Delta}(\sum _{k\in\mathbb Z}\delta_k)(x)=\sum_{k\in\mathbb Z}e^{-it(2\pi k)^2+i2\pi kx}.\end{equation}
For $t=\frac 1{2\pi}\frac pq$ we have (choosing $M=2\pi$ in formulas (37) combined with (42) from \cite{DHV})
\begin{equation}\label{Talbotper}e^{it\Delta}(\sum _{k\in\mathbb Z}\delta_k)(x)=\frac 1{q}\sum_{l\in\mathbb Z}\sum_{m=0}^{q-1}G(-p,m,q)\delta(x-l-\frac mq),\end{equation}
which describes the linear Talbot effect in the periodic setting. Here $G(-p,m,q)$ stands for the Gauss sum
$$G(-p,m,q)=\sum_{l=0}^{q-1}e^{2\pi i\frac{-pl^2+ml}{q}}.$$\\

Now we want to compute the free evolution of data $u_0=\sum _{k\in\mathbb Z}\alpha_k\delta_k$. As $\widehat{u_0}(\xi)=\sum_{k\in\mathbb Z}e^{-ik\xi}$ is $2\pi-$periodic  we have:
$$e^{it\Delta}u_0(x)=\frac 1{2\pi}\int_{-\infty}^\infty e^{ix\xi}e^{-it\xi^2}\hat{u_0}(\xi)\,d\xi=\frac 1{2\pi}\sum_{k\in\mathbb Z}\int_{2\pi k}^{2\pi(k+1)} e^{ix\xi-it\xi^2}\hat{u_0}(\xi)\,d\xi$$
$$=\frac 1{2\pi}\int_0^{2\pi}\hat{u_0}(\xi)\sum_{k\in\mathbb Z}e^{ix(2\pi k+\xi)-it(2\pi k+\xi)^2}\,d\xi=\frac 1{2\pi}\int_0^{2\pi}\hat{u_0}(\xi)e^{-it\xi^2+ix\xi}\sum_{k\in\mathbb Z}e^{-it\,(2\pi k)^2+i2\pi k (x- 2t\xi )}\,d\xi.$$
Therefore,  for $t_{p,q}=\frac 1{2\pi}\frac pq$ we get using \eqref{perevol}-\eqref{Talbotper}:
$$e^{it_{p,q}\Delta}u_0(x)=\frac 1{q}\int_0^{2\pi}\hat{u_0}(\xi)e^{-it_{p,q}\xi^2+ix\xi}\sum_{l\in\mathbb Z}\sum_{m=0}^{q-1}G(-p,m,q)\delta(x-2t_{p,q}\xi-l-\frac mq)\,d\xi.$$
For $q$ odd  $ G(-p,m,q)=\sqrt{q}e^{i\theta_m}$ for some $\theta_{m,p,q}\in\mathbb R$ so we get for $t_{p,q}=\frac 1{2\pi}\frac pq$
$$e^{it_{p,q}\Delta}u_0(x)=\frac {1}{\sqrt{q}}\int_0^{2\pi}\hat{u_0}(\xi)e^{-it_{p,q}\xi^2+ix\xi}\sum_{l\in\mathbb Z}\sum_{m=0}^{q-1}e^{i\theta_{m,p,q}}\delta(x-2t_{p,q}\,\xi-l-\frac mq)\,d\xi.$$
We note that for $0\leq \xi<2\pi$ we have $0\leq 2t\xi<\frac {2p}q$. 
For a given $x\in\mathbb R$ there exists a unique  $l_x\in\mathbb Z$ and a unique $0\leq m_x<q$ such that 
$$x-l_x-\frac {m_x}q \in[0,\frac 1q).$$
We denote
$$\xi_{x} :=\frac {\pi q}p(x-l_x-\frac {m_x}q)=\frac {\pi q}p\,d(x,\frac1q\mathbb Z) \in[0,\frac \pi p).$$
In particular if $\hat u_0$ is located modulo $2\pi$ only in a neighborhood of zero of radius less than $\frac {\pi}{ p}$ then 
$$e^{it_{p,q}\Delta}u_0(x)=\frac {1}{\sqrt{q}} \,\hat{u_0}(\xi_x)\, e^{-it_{p,q}\,\xi_x^2+ix\,\xi_x+i\theta_{m,p,q}},$$
for some $\theta_{x,p,q}\in\mathbb R$. 
If moreover $\hat u_0$ is located modulo $2\pi$ only in a neighborhood of zero of radius less than $\eta \frac {\pi}{ p}$ with $0<\eta<1$, then the above expression vanishes for $x$ at distance larger than $\frac {\eta }{ q}$ from $\frac1q\mathbb Z$.

We are left with proving the concentration effect \eqref{conc} of Proposition \ref{thTalbot}. We shall construct a family of sequences $\{\alpha_k^\lambda\}$ such that $\sum _{k\in\mathbb Z} \alpha_k^\lambda \delta_k$ concentrates in Fourier variable near integers. To this purpose we consider $\psi$ a real bounded function with support in $[-\frac 12,\frac 12]$ and $\psi(0)=1$. We define
$$f^\lambda(\xi)= \lambda^{\beta}\psi(\lambda\xi) ,\forall \xi\in[-\pi,\pi],$$
with $\beta\in\mathbb R$.
Thus we can decompose
$$f^\lambda(\xi)=\sum _{k\in\mathbb Z} \alpha_k^\lambda e^{ik\xi},$$
and consider 
$$u_0^\lambda=\sum _{k\in\mathbb Z} \alpha_k^\lambda \delta_k.$$
In particular, on $[-\pi,\pi]$, we have $\widehat{u_0^\lambda}=f^\lambda$. Given $t_{p,q}=\frac 1{2\pi}\frac pq$, for $\lambda>p$, the restriction of $\widehat{u_0^\lambda}$ to $[-\pi,\pi]$ has support included in a neighborhood of zero of radius less than $\eta\frac \pi p$ for a $\eta\in]0,1[$. We then get by \eqref{repr2}
$$e^{it_{p,q}\Delta}u_0^\lambda(0)=\frac {1}{\sqrt{q}} \,\widehat{u_0^\lambda}(0)\, e^{-it_{p,q}\,\xi_x^2+ix\,\xi_x+i\theta_{m_x}},$$
so 
$$|e^{it_{p,q}\Delta}u_0^\lambda(0)|=\frac {1}{\sqrt{q}} \,|f^\lambda(0)|=\frac {1}{\sqrt{q}} \lambda^{\beta}\psi(0)=\frac {1}{\sqrt{q}} \lambda^{\beta}.$$
On the other hand, at $t_{p,q}=\frac 1{2\pi}\frac pq$ we have
$$|e^{it_{p,q}\Delta}\alpha_0^\lambda\delta_0(0)|=\sqrt{\frac {4q}p} \,|\alpha_0^\lambda|=\sqrt{\frac {4q}p} \,\frac1{2\pi}\left|\int_{-\pi}^\pi f^\lambda(\xi)d\xi\right|=C(\psi)\sqrt{\frac qp} \lambda^{\beta-1}$$
Therefore
$$\left|\frac{e^{it_{p,q}\Delta}u_0^\lambda(0)}{e^{it_{p,q}\Delta}\alpha_0^\lambda\delta_0(0)}\right|=\frac{
\sqrt{p}}{C(\psi)q}\lambda\overset{\lambda\rightarrow\infty}{\longrightarrow}\infty,$$  
and the proof of Proposition \ref{thTalbot} is complete.

Finally, for the first part of Proposition \ref{proptalbotnl}, as the sequence $\{\alpha_k\}$ enters the framework of Theorem \ref{thdiracs},
$$u(t_{p,q},x)=\sum_{k\in\mathbb Z}e^{\mp i\frac{|\alpha_k|^2}{4\pi}\log \sqrt{t_{p,q}}}(\alpha_k+R_k(t_{p,q}))e^{it_{p,q}\Delta}\delta_k(x),$$
so
$$\left|u(t_{p,q},x)-\sum_{k\in\mathbb Z}e^{it_{p,q}\Delta}\alpha_k\delta_k(x)\right|$$
$$\leq \sum_{k\in\mathbb Z}(1-e^{\mp i\frac{|\alpha_k|^2}{4\pi}\log \sqrt{t_{p,q}}})\alpha_k e^{it_{p,q}\Delta}\delta_k(x)+\sum_{k\in\mathbb Z}e^{\mp i\frac{|\alpha_k|^2}{4\pi}\log \sqrt{t_{p,q}}}R_k(t_{p,q})e^{it_{p,q}\Delta}\delta_k(x).$$
From Proposition \ref{thTalbot}, if $x$ is at distance larger than $\frac \eta{q}$ from $\frac1q\mathbb Z$ then $e^{it_{p,q}\Delta}\sum_k\alpha_k\delta_k(x)$ vanishes. Also, from \eqref{stabnonres} we can choose the radius $\delta$ of the fixed point argument for $\{R_k\}$ to be of type $C\|\{\alpha_k\}\|_{l^{2,s}}^3$ so we get
$$|u(t_{p,q},x)|\leq \sum_{k\in\mathbb Z}|1-e^{i\frac{\mp |\alpha_k|^2}{4\pi}\log \sqrt{t_{p,q}}}||\alpha_k| \frac{C}{\sqrt{t_{p,q}}}+C\|\{\alpha_k\}\|_{l^{2,s}}^3t_{p,q}^{\gamma-\frac 12}.$$
If $q$ is such that  $\|\{\alpha_k\}\|_{l^{\infty}}^2\log q<\frac 12$ then we obtain
$$|u(t_{p,q},x)|\leq C\sqrt{q}\log q\sum_{k\in\mathbb Z}|\alpha_k|^3+\frac{C}{q^{\gamma-\frac 12}}\|\{\alpha_k\}\|_{l^{2,s}}^3,$$
and therefore \eqref{vanishing} follows for $C\sqrt{q}\log q\,\epsilon^2<1$.

For the last part of Proposition \ref{proptalbotnl} we proceed as for the last part of Proposition \ref{thTalbot}, and we suppose also that $\psi\in H^s(\mathbb R)$ with $s>\frac 12$, and impose $\beta<\frac 12-s$. Then the condition $\psi\in H^s(\mathbb R)$ insures us that $\{\alpha^\lambda_k\}\in l^{2,s}$, and moreover 
$\|\{\alpha^\lambda_k\}\|_{l^{2,s}}=C(\psi)\lambda^{\beta+s-\frac 12}\overset{\lambda\rightarrow +\infty}{\longrightarrow}0$. Therefore, for $\lambda$ large enough, by using the same estimates as above we obtain for $\frac 12<\gamma<1$
$$|u^\lambda(t_{p,q},0)-e^{it_{p,q}\Delta}u_0^\lambda(0)|\leq C\sqrt{q}\log q\|\{\alpha^\lambda_k\}\|_{l^{2,s}}^3+\frac{C}{q^{\gamma-\frac 12}}\|\{\alpha^\lambda_k\}\|_{l^{2,s}}^3\leq C\sqrt{q}\log q\lambda^{3\beta+3s-\frac 32},$$
so
$$\left|\frac{u^\lambda(t_{p,q},0)}{e^{it_{p,q}\Delta}\alpha_0^\lambda\delta_0(0)}-\frac{e^{it_{p,q}\Delta}u_0^\lambda(0)}{e^{it_{p,q}\Delta}\alpha_0^\lambda\delta_0(0)}\right|\leq C\log q\lambda^{2\beta+3s-\frac 12}.$$
By choosing $\beta=\frac 32(\frac 12-s)^-$ we have $\lambda^{2\beta+3s-\frac 12}\ll \lambda$ so in view of \eqref{conc} the divergence \eqref{concnonlin} follows.


\section{Evolution of polygonal lines through the binormal flow} 
In this section we prove Theorem \ref{brokenline}.
\subsection{Plan of the proof}

We consider equation \eqref{cubicmod} with initial data 
$$u(0)=\sum_{k\in\mathbb Z} \alpha_k\delta_k,$$
where the coefficients $\alpha_k$ will be defined in \S\ref{sectdesign} in a specific way involving geometric quantities characterizing the polygonal line $\chi_0$. 
Then Theorem \ref{thdiracs} gives us a solution $u(t,x)$ on $t>0$. From this smooth solution on $]0,\infty[$ we construct in \S \ref{sectconstr} a smooth solution $\chi(t)$ of the binormal flow on $]0,,\infty[$, that has a limit $\chi(0)$ at $t=0$. 
Now the goal is to prove that the curve $\chi(0)$ is the curve $\chi_0$ modulo a translation and a rotation. This is done in several steps. First we show in \S \ref{sectTlimit} that the tangent vector has a limit at $t=0$. Secondly we show in \S \ref{sectsegm} that $\chi(0)$ is a segment for $x\in]n,n+1[,\forall n\in\mathbb Z$. Then we prove in \S \ref{sectcorners}, by analyzing the frame of the curve through self-similar variables paths, that at points $x=k\in\mathbb Z$ the curve $\chi(0)$ presents a corner of same angle as $\chi_0$. In \S \ref{secttorsion} we recover the torsion of $\chi_0$ by using also a similar analysis for modulated normal vectors in \S \ref{sectN}.   
Therefore we conclude in \S \ref{sectendproof} that $\chi(0)$ and $\chi_0$ coincide modulo a translation and a rotation. By considering the corresponding translated and rotated $\chi(t)$ we obtain the desired binormal flow solution with limit $\chi_0$ at $t=0$. The extension to negative times is done by using time reversibility. 
Uniqueness holds in the class of curves having filament functions of type \eqref{ansatz}. In \S \ref{sectproperties} we describe some properties of the binormal flow solution given by the Theorem \ref{brokenline}.

\subsection{Designing the coefficients of the Dirac masses in geometric terms} \label{sectdesign}
Let $\chi_0(x)$ be a polygonal line paramatrized by archlength, having at least two consecutive corners, located at $x=x_0$ and $x=x_1$. We denote by $\{x_n, n\in L\}\subset\mathbb R$,  the locations of all its corners ordered incresingly: $x_n<x_{n+1}$. Here $L$ stand for a finite or infinite set of consecutive integers including $n=0$. We can characterize such a curve by the location of its corners $\{x_n, n\in L\}\subset\mathbb R$ and by a triple sequence $\{\theta_n,\tau_n,\delta_n\}_{n\in L}$ where $\theta_n\in]0,\pi[$, $\tau_n\in[0,\pi]$ and $\delta_n\in\{-,+\}$, $\tau_{0}=0$, $\delta_{0}=+$, in the following way. 

Let us first denote by $T_n\in\mathbb S^2$ the tangent vector of $\chi_0(x)$ for $x\in]x_n,x_{n+1}[$.  For $n\in L$ we define $\theta_n\in]0,\pi[$ to be the (curvature) angle between $T_{n-1}$ and $T_{n}$. 
We note that given only $T_{n-1}$ and $\theta_n$ we have a $[0,2\pi[$-parameter of possibilities to choose $T_{n}$. We define $\tau_{0}=0$, $\delta_{0}=+$ and for $n\in L$ with $n+1\in L$ and $n\geq 0$ 
we define a (torsion) angle $\tau_{n+1}\in[0,\pi]$ at the corner located at $x_{n+1}$  to be such that
\begin{equation}\label{deftorsionmod}
\cos(\tau_{n+1})=\frac{T_{n-1}\wedge T_n}{|T_{n-1}\wedge T_n|}.\frac{T_n\wedge T_{n+1}}{|T_n\wedge T_{n+1}|}.
\end{equation}
We note now that given only $T_{n-1},T_n$, $\theta_n$ and $\tau_n$ we have two possibilities to choose $T_{n+1}$. Indeed, $T_{n+1}$ is determined by the position of $T_n\wedge T_{n+1}$ in the plane $\Pi_n$ orthogonal to $T_n$, given by the oriented frame $T_{n-1}\wedge T_n$ and $T_n\wedge (T_{n-1}\wedge T_n)$. Therefore we have two possibilities by orienting it with respect to $T_{n-1}\wedge T_n$: by $\tau_n$ or by $-\tau_n$. We define $\delta_{n+1}=+$ if $ (T_{n-1}\wedge T_n)\wedge(T_n\wedge T_{n+1})$ points in the same direction as $T_n$, and 
$\delta_{n+1}=-$ if it points out in the opposite direction. For $n<0$ we define similarly (torsion) angles  $\tau_n\in[0,\pi[$ at the corners located at $x_n$. 

Conversely, given $L$ a set of consecutive integers containing $0$ and $1$, given an increasing sequence $\{x_n, n\in L\}\subset\mathbb R$, and given a triple sequence $\{\theta_n,\tau_n,\delta_n\}_{n\in L}$ where $\theta_n\in]0,\pi[$, $\tau_n\in[0,\pi]$ and $\delta_n\in\{-,+\}$, such that $\tau_{0}=0$, $\delta_{0}=+$
, we can determine a polygonal line $\chi_0$, unique up to rotations and translation, in the following way. We construct first the tangent vectors, then $\chi_0$ is obtained by setting $\chi_0'(x)=T_n$ on $x\in]x_n,x_{n+1}[$. We pick a unit vector and denote it $T_{-1}$. Then we pick a unit vector having an angle $\theta_{0}$ with $T_{-1}$, and we call it $T_{0}$. Then we consider all unit vectors $v$ having an angle $\theta_{1}$ with $T_{0}$. Among them, we choose the two of them such that $T_{0}\wedge v$, that lives in the plane $\Pi_{0}$ orthogonal to $T_{0}$, have an angle $\tau_{1}$ with $T_{-1}\wedge T_{0}$. Eventually, we choose $T_{1}$ to be the one of the two such that if $\delta_{0}=+$ the vector $ (T_{-1}\wedge T_{0})\wedge(T_{0}\wedge v)$ points in the same direction as $T_{0}$, and such that if $\delta_{0}=+$ the vector $ (T_{-1}\wedge T_{0})\wedge(T_{0}\wedge v)$ points in the opposite direction of $T_{0}$. We iterate this process to construct $\chi_0(x)$ on $x>x_0$, and similarly to construct $\chi_0(x)$ on $x<x_0$.

Given $\chi_0$ the polygonal line of the statement, we define $\{x_n, n\in L\}$ the ordered set of its integer corner locations and the corresponding sequence $\{\theta_n,\tau_n,\delta_n\}_{n\in L}$ where $\theta_n\in]0,\pi[$, $\tau_n\in[0,\pi]$ and $\delta_n\in\{-,+\}$, $\tau_{0}=0$, $\delta_{0}=+$. Then we define $\alpha_k=0$ if $k\notin\{x_n, n\in L\} $ and if $k=x_n$ for some $n\in L$ we define $\alpha_k\in\mathbb C$ in the following way. First we set
\begin{equation}\label{moduluscoef}
|\alpha_k|=\sqrt{-\frac {2}{\pi}\log\left(\sin\left(\frac{\theta n}{2}\right)\right)}.
\end{equation}
Then we set $Arg(\alpha_{x_0})=0$ and we define $Arg(\alpha_{x_{n+1}})\in [0,2\pi)$ to be determined by 
\begin{equation}\label{argcoef}
\left\{\begin{array}{c}
\cos(\tau_{n+1})=-\cos(\phi_{|\alpha_{x_n}|}-\phi_{|\alpha_{x_{n+1}}|}+\beta_n+Arg(\alpha_{x_n})-Arg(\alpha_{x_{n+1}})) ,\\ 
\delta_{n+1}=-sgn(\sin(\phi_{|\alpha_{x_n}|}-\phi_{|\alpha_{x_{n+1}}|}+\beta_n+Arg(\alpha_{x_n})-Arg(\alpha_{x_{n+1}}))),
\end{array}\right.
\end{equation}
where $\{\phi_{|\alpha_{x_n}|}\}$ are defined in Lemma \ref{Avect} and depend only on $|\alpha_{x_n}|$, and
 $$\beta_n=(|\alpha_{x_n}|^2-|\alpha_{x_{n+1}}|^2)\log|x_n-x_{n+1}|.$$

We consider the solution $u(t,x)$ given by Theorem \eqref{thdiracs} for the sequence $\sqrt{4\pi i} \alpha_k$ and $\frac 12<\gamma<1$, that solves 
\begin{equation}\label{cubicmodfinal}\begin{array}{c}i\partial_t u +\Delta u+\frac 12\left(|u|^2-\frac{2M}{t}\right)u=0,
\end{array}
\end{equation}
with $M=\sum_{k\in\mathbb Z}|\alpha_k|^2$, and can be written as
\begin{equation}\label{ansatzfinal}u(t,x)=\sum_{k\in\mathbb Z}e^{-i|\alpha_k|^2\log \sqrt{t}}(\alpha_k+R_k(t))\frac{e^{i\frac{|x-k|^2}{4t}}}{\sqrt{t}},\end{equation}
 with 
$$\sup_{0<t<T}t^{-\gamma}\|\{R_k(t)\}\|_{l^{2,s}}+t\,\|\{\partial_t R_k(t)\}\|_{l^{2,s}}<C.$$

\subsection{Construction of a solution of the binormal flow for $t>0$}\label{sectconstr}
Given an orthonormal basis $(v_1, v_2, v_3)$ of $\mathbb R^3$, 
a point $P\in\mathbb R^3$ and a time $t_0>0$ we construct a frame at all points $x\in\mathbb R$ and times $t>0$ by imposing{\footnote{Actually we should work in the definition of the evolution in time and in space laws for the frame with $v(t,x)=e^{iM\log\sqrt{t}}u(t,x)$ instead of $u(t,x)$. Indeed, this construction leads, by identifying $T_{tx}=T_{xt},e_{1tx}=e_{1xt},e_{2tx}=e_{2xt}$ to the NLS equation \eqref{cubicmodfinal} with nonlinearity $\frac 12\left(|v|^2-\frac{M}{t}\right)v$. However, for simplicity of the presentation we shall use the notation $u(t,x)$.  }} $(T,e_1,e_2)(t_0,0)=(v_1,v_2,v_3)$,
$$\left(\begin{array}{c}
T\\ e_1\\ e_2
\end{array}\right)_t(t,x)=
\left(\begin{array}{ccc}
0 & -\Im u_x & \Re u_x \\ \Im u_x & 0 & -\frac{|u|^2}{2}+\frac{M}{2t}\\ -\Re u_x &  \frac{|u|^2}{2}-\frac{M}{ 2t} & 0 
\end{array}\right)
\left(\begin{array}{c}
T\\ e_1\\ e_2
\end{array}\right)(t,x),$$
and
$$\left(\begin{array}{c}
T\\ e_1\\ e_2
\end{array}\right)_x(t,x)=
\left(\begin{array}{ccc}
0 & \Re u & \Im u \\ -\Re u & 0 & 0 \\ -\Im u &  0 & 0 
\end{array}\right)
\left(\begin{array}{c}
T\\ e_1\\ e_2
\end{array}\right)(t,x).$$
We can already notice that $T(t,x)$ satisfies the Schr\"odinger map: 
$$T_t=T\wedge T_{xx}.$$
We define now for all points $x\in\mathbb R$ and times $t>0$:
$$\chi(t,x)=P+\int_t^{t_0}(T\wedge T_{x})(\tau,0)d\tau+\int_0^{x}T(t,s)ds.$$
As $T(t,x)$ satisfies the Schr\"odinger map we have $T_t=(T\wedge T_x)_x$, so we can easily compute that $\chi(t,x)$ satisfies the binormal flow: 
$$\chi_t=T\wedge T_x=\chi_x\wedge\chi_{xx}.$$
We note that there are two degrees of freedom in this construction - the choice of the orthonormal basis $(v_1, v_2, v_3)$ of $\mathbb R^3$ and the choice of the point $P\in\mathbb R^3$. Changing these parameters is equivalent to rotate and translate respectively the solution $\chi(t)$.  The resulting evolution of curves is still a solution of the binormal flow, with the same laws of evolution in time and space for the frame. 
 
We introduce now the complex valued normal vector
$$N(t,x)=e_1(t,x)+ie_2(t,x).$$
With this vector we can write in a simpler way the laws of evolution in time and space for the frame, that will be useful in the rest of the proof:
\begin{equation}
\label{Tx}
T_x=\Re u \,e_1+\Im u \,e_2=\Re(\overline u\, N),
\end{equation}

\begin{equation}
\label{Nx}N_x=e_{1x}+ie_{2x}=-\Re u\, T-i\Im u \,T=-u\, T,
\end{equation}

\begin{equation}
\label{Tt}T_t=-\Im u_x\, e_1+\Re u_x\,e_2=\Im (\overline{u_x}\,N),
\end{equation}

\begin{equation}
\label{Nt}N_t=\Im u_x\,T+\left(-\frac{|u|^2}{2}+\frac{M}{2t}\right) e_2-i\Re u_x\,T+i\left(\frac{|u|^2}{2}-\frac{M}{2t}\right) e_1=-iu_x\, T+i\left(\frac{|u|^2}{2}-\frac{M}{2t}\right) N,
\end{equation}

\begin{equation}
\label{chit}\chi_t=T\wedge T_x=T\wedge\Re(\overline{u}\,N)=\Im(\overline{u}\, N).
\end{equation}
In particular from \eqref{ansatzfinal} and \eqref{chit} we have for $0<t_1<t_2<1$:
$$|\chi(t_2,x)-\chi(t_1,x)|=\left|\int_{t_1}^{t_2}\chi_t(t,x)dt\right|\leq\int_{t_1}^{t_2}|u(t,x)|dt$$
$$\leq \int_{t_1}^{t_2}\sum_j|\alpha_j+R_j(t)|\frac{dt}{\sqrt{t}}\leq C\sqrt{t_2}(\|\{\alpha_j\}\|_{l^{1}}+\|\{R_j\}\|_{L^\infty(0,1)l^{1}}).$$
This implies the existence of a limit curve at $t=0$ for all $x\in\mathbb R$:
$$\exists\, \underset{t\rightarrow 0}{\lim}\,\chi(t,x)=:\chi(0,x).$$
Moreover, $\chi(0)$ is a continuous curve. 

\subsection{Existence of a trace at $t=0$ for the tangent vector}\label{sectTlimit}
For further purposes we shall need to show that the tangent vector $T(t,x)$ has a limit $T(0,x)$ at $t=0$, and moreover we shall need to get a convergence decay of selfsimilar type  $\frac{\sqrt{t}}{d(x,\mathbb Z)}$ for $x$ close to $\mathbb Z$. This is insured by the following lemma.
\begin{lemma}\label{lemmaconvT}
Let $0<t_1<t_2<1$. If $x\in\mathbb R\backslash \frac 12\mathbb Z$ then
\begin{equation}\label{convT}
|T(t_2,x)-T(t_1,x)|\leq C(1+|x|)\sqrt{t_2}\left(\frac 1{d(x,\frac 12\mathbb Z)}+\frac1{d(x,\mathbb Z)}\right),
\end{equation}
while if $x\in \frac 12\mathbb Z$ then
\begin{equation}\label{convTZ}
|T(t_2,x)-T(t_1,x)|\leq C(1+|x|)\sqrt{t_2}.
\end{equation}
In particular for any $x\in\mathbb R$ there exists a trace for the tangent vector at $t=0$:
\begin{equation}\label{04}\exists\, \underset{t\rightarrow 0}{\lim}\,T(t,x)=:T(0,x).
\end{equation}
\end{lemma}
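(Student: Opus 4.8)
The plan is to control the increment through $|T(t_2,x)-T(t_1,x)|\le\int_{t_1}^{t_2}|T_t(t,\cdot)|\,dt$ and to read off from \eqref{Tt} that $T_t=\Im(\overline{u_x}N)$, so that $|T_t|\le|u_x|\,|N|=\sqrt2\,|u_x|$. Substituting the ansatz \eqref{ansatzfinal} and writing $c_k(t)=e^{-i|\alpha_k|^2\log\sqrt t}(\alpha_k+R_k(t))$, one has $u_x=\frac1{\sqrt t}\sum_k c_k\frac{i(x-k)}{2t}e^{i\frac{(x-k)^2}{4t}}$, whence the crude bound $|u_x|\le C(1+|x|)t^{-3/2}$ follows from the weighted summability of $\{\alpha_k\}$ (using $\sum_k|c_k||x-k|\le C(1+|x|)\|\{c_k\}\|_{l^{2,s}}$ by Cauchy--Schwarz). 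This bound is not time-integrable near $t=0$, so the whole point is to gain integrability from the oscillation of the Gaussian phases, which I would do by integrating by parts in $t$.

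First I would use the identity $\frac{i(x-k)}{2t}e^{i\frac{(x-k)^2}{4t}}=-\frac{2t}{x-k}\,\partial_t e^{i\frac{(x-k)^2}{4t}}$ (valid for $x\neq k$; note that when $x\in\mathbb Z$ the resonant term $k=x$ vanishes identically because of the factor $x-k$) and integrate by parts against $\partial_t\big(e^{i(x-k)^2/4t}\big)$. The boundary contributions are bounded by $\sum_k\frac{\sqrt{t_2}|c_k|}{|x-k|}\le C(1+|x|)\sqrt{t_2}/d(x,\mathbb Z)$, with the term at $t_1$ tending to $0$; this is the source of the $1/d(x,\mathbb Z)$ in \eqref{convT}. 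In the resulting integral the factor $\partial_t(\sqrt t\,\overline{c_k})$ is of size $t^{-1/2}$ (using the bound $t\|\partial_t R_k\|_{l^{2,s}}\le C$ accompanying \eqref{ansatzfinal}), hence integrable and again of the right order. The only delicate term is the one carrying $N_t$.

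I would then split $N_t$ according to \eqref{Nt}. The potential part $i\big(\tfrac{|u|^2}{2}-\tfrac{M}{2t}\big)N$ has size $O(t^{-1})$ (its diagonal is $M/2t$ and is cancelled by the renormalization, using the conserved mass $\sum_k|c_k|^2=M$), so it pairs with the $\sqrt t/|x-k|$ prefactor to give an integrable $t^{-1/2}$ contribution, again controlled by $1/d(x,\mathbb Z)$. The transport part $-iu_x T$ produces a double sum $\sum_{k,l}\frac{\overline{c_k}c_l(x-l)}{(x-k)t}\,T\,e^{i\frac{(k-l)(2x-k-l)}{4t}}$; its diagonal $k=l$ equals $\frac Mt\,T$, a real multiple of the real vector $T$, and therefore disappears when the outer $\Im$ is taken. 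For the off-diagonal part I would integrate by parts a second time, now exploiting the quadratic phase $e^{i(k-l)(2x-k-l)/4t}$ whose resonances are located at $x\in\frac12\mathbb Z$; the corresponding boundary terms carry the divisor $|2x-k-l|^{-1}\sim d(x,\tfrac12\mathbb Z)^{-1}$, which is exactly the remaining term in \eqref{convT}.

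The key structural point that makes this close is that the second integration by parts gains a factor $t$: when its remainder differentiates the frame vector and brings back $T_t$ (or $N_t$) of size $t^{-3/2}$, the net integrand is of size $t^{-1/2}$ and is integrable, so there is no infinite regress. For $x\in\frac12\mathbb Z$ the oscillation is lost precisely on the resonant pairs $k+l=2x$, but substituting $l=2x-k$ shows these contributions are again real multiples of $T$ (so they vanish under $\Im$), while all non-resonant divisors are $\ge1$; this yields the clean bound \eqref{convTZ}. Finally, \eqref{convT}--\eqref{convTZ} show that for each fixed $x$ the family $\{T(t,x)\}$ satisfies the Cauchy criterion as $t\to0^+$, giving the trace \eqref{04}. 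I expect the main obstacle to be the bookkeeping of the weighted double sums against the three small divisors $|x-k|$, $|k-l|$ and $|2x-k-l|$: making these converge with the stated $(1+|x|)$ growth is what forces the strong weighted assumption on $\{\alpha_k\}$ and a careful isolation of the finitely many near-resonant modes near the nearest (half-)integer to $x$.
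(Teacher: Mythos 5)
Your proposal follows essentially the same route as the paper's proof: integration by parts in time against the phases $e^{-i(x-j)^2/4t}$, boundary terms yielding the $1/d(x,\mathbb Z)$ divisor, isolation of the $N_t$ contribution split via \eqref{Nt}, cancellation of the diagonal $j=k$ and resonant $j+k=2x$ terms under $\Im$ against the real vector $T$, a second integration by parts on the non-resonant double sum producing the $1/d(x,\frac 12\mathbb Z)$ divisor, and control of the reappearing $T_t$ term by the gained power of $t$ together with the weighted bound $\sum_r|\alpha_r+R_r|\,|x-r|\leq C(1+|x|)$. The argument is correct and matches the paper's in all essential steps (your remark about the diagonal of $|u|^2$ cancelling $M/2t$ is true but not needed, since the difference is already $O(1/t)$ pointwise).
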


\begin{proof}
In view of \eqref{Tt} and \eqref{ansatzfinal} we have
$$T(t_2,x)-T(t_1,x)=\int_{t_1}^{t_2} \Im(\overline{u_x}\,N(t,x)) \,dt$$
$$=\Im\int_{t_1}^{t_2}e^{-iM\log\sqrt{t}}\sum_je^{i|\alpha_j|^2\log \sqrt{t}}(\overline{\alpha_j+R_j(t)})\frac{e^{-i\frac{(x-j)^2}{4t}}}{\sqrt{t}}(-i)\frac{(x-j)}{2t}N(t,x)\,dt.$$

In case $j=x$ the integrant vanishes so we get the left-hand-side of \eqref{convTZ} null. 

For $j\neq x$ we perform an integration by parts that exploits the oscillatory phase to get integrability in time:
$$T(t_2,x)-T(t_1,x)=\left[\Im e^{-iM\log\sqrt{t}}\sum_{j\neq x}e^{i|\alpha_j|^2\log \sqrt{t}}(\overline{\alpha_j+R_j(t)})\frac{e^{-i\frac{(x-j)^2}{4t}}}{\sqrt{t}}(-i)\frac{4t^2}{(x-j)^2}(-i)\frac{(x-j)}{2t}N(t,x)\right]_{t_1}^{t_2}$$
$$+2\Im\int_{t_1}^{t_2}e^{-iM\log\sqrt{t}}\sum_{j\neq x}\frac{e^{-i\frac{(x-j)^2}{4t}}}{x-j}(\sqrt{t}\,e^{i|\alpha_j|^2\log \sqrt{t}}(\overline{\alpha_j+R_j(t)})N(t,x))_t\,dt=:I_0+I_1+I_2+I_3+I_4,$$
where we have denoted by $I_0$ the boundary term and by $I_1,I_2,I_3,I_4$ the terms obtained after the differentiation in time of the quadruple product in the integral part. 
We consider first the boundary term 
$$|I_0|\leq C\sqrt{t_2}\sum_{j\neq x}|\alpha_j+R_j(t_2)|\frac{1}{|x-j|}+C\sqrt{t_1}\sum_{j\neq x}|\alpha_j+R_j(t_1)|\frac{1}{|x-j|}.$$
If $x\in\mathbb Z$ then we have
$$|I_0|\leq C\sqrt{t_2}(\|\{\alpha_j\}\|_{l^{1}}+\|\{R_j\}\|_{L^\infty(0,t_2)l^{1}}),$$
while if $x\notin\mathbb Z$ 
$$|I_0|\leq C\frac{\sqrt{t_2}}{d(x,\mathbb Z)}(\|\{\alpha_j\}\|_{l^{1}}+\|\{R_j\}\|_{L^\infty(0,t_2)l^{1}}).$$
Therefore the contribution of $I_0$ fits with the estimates in the statement of the Lemma. The terms $I_1$ and $I_2$ can be treated the same, as $\int_{t_1}^{t_2}(\sqrt{t}e^{-i|\alpha_k|^2\log \sqrt{t}})_t\,dt\leq C\sqrt{t_2}$. Also the term $I_3$ can be treated similarly, as $|\partial_t R_j(t))|\leq \frac Ct$ on $(0,1)$. We are left with the $I_4$ term, which contains in view of \eqref{Nt} the expression $N_t=-iu_x\, T+i\left(\frac{|u|^2}{2}-\frac{M}{2t}\right) N$:
$$I_4=2\Im\int_{t_1}^{t_2}e^{-iM\log\sqrt{t}}\sum_{j\neq x}\frac{e^{-i\frac{(x-j)^2}{4t}}}{x-j}\sqrt{t}\,e^{i|\alpha_j|^2\log \sqrt{t}}(\overline{\alpha_j+R_j(t)})$$
$$\times \left(-ie^{iM\log\sqrt{t}}\sum_ke^{-i|\alpha_k|^2\log \sqrt{t}}(\alpha_k+R_k(t))\,\frac{e^{i\frac{(x-k)^2}{4t}}}{\sqrt{t}}i\frac{(x-k)}{2t}T(t,x)\right.$$
$$\left.+i\left(\frac{\sum_{r,\tilde r}e^{-i(|\alpha_r|^2-|\alpha_{\tilde r}|^2)\log \sqrt{t}}(\alpha_r+R_r(t))(\overline{\alpha_{\tilde r}+R_{\tilde r}(t)})e^{i\frac{(x-r)^2-(x-\tilde r)^2}{4t}}}{2 t}-\frac M{2t}\right)N(t,x)\right)\,dt.$$
We notice that the second term can be upper-bounded as $I_0$. We are thus left with the first term:
$$I_{4,1}=\Im\int_{t_1}^{t_2}\sum_{j,k\neq x}e^{-i\frac{(j-k)(j+k-2x)}{4t}}\frac{x-k}{x-j}e^{i(|\alpha_j|^2-|\alpha_k|^2)\log \sqrt{t}}(\overline{\alpha_j+R_j(t)})(\alpha_k+R_k(t))T(t,x)\frac{dt}{t},$$
for which we still have an obstruction for the integrability in time. 
The terms in the sum for which $j=k$ have null contribution as they are real numbers. Also, in case $2x\in\mathbb Z$, the terms in the sum for which $j+k-2x=0$ give
$$-\Im\int_{t_1}^{t_2}\sum_{k\neq x}e^{i(|\alpha_{-k+2x}|^2-|\alpha_k|^2)\log \sqrt{t}}(\overline{\alpha_{-k+2x}+R_{-k+2x}(t)})(\alpha_k+R_k(t))T(t,x)\frac{dt}{t}$$
$$=-\Im\int_{t_1}^{t_2}\sum_{j\neq x}e^{i(|\alpha_j|^2-|\alpha_{-j+2x}|^2)\log \sqrt{t}}(\overline{\alpha_{j}+R_{j}(t)})(\alpha_{-j+2x}+R_{-j+2x}(t))T(t,x)\frac{dt}{t}$$
$$=\Im\int_{t_1}^{t_2}\sum_{j\neq x}e^{i(-|\alpha_j|^2+|\alpha_{-j+2x}|^2)\log \sqrt{t}}(\alpha_{j}+R_{j}(t))(\overline{\alpha_{-j+2x}+R_{-j+2x}(t)})T(t,x)\frac{dt}{t}$$
$$=\Im\int_{t_1}^{t_2}\sum_{k\neq x}e^{i(|\alpha_{-k+2x}|^2-|\alpha_k|^2)\log \sqrt{t}}(\overline{\alpha_{-k+2x}+R_{-k+2x}(t)})(\alpha_k+R_k(t))T(t,x)\frac{dt}{t},$$
so their contribution is null.\\
 Therefore we have, for all $x\in\mathbb R$,
$$I_{4,1}=\Im\int_{t_1}^{t_2}\sum_{j,k\neq x;\,j\neq k;\,j+k\neq 2x}e^{-i\frac{(j-k)(j+k-2x)}{4t}}\frac{x-k}{x-j}$$
$$\times e^{i(|\alpha_j|^2-|\alpha_k|^2)\log \sqrt{t}}(\overline{\alpha_j+R_j(t)})(\alpha_k+R_k(t))T(t,x)\frac{dt}{t}.$$
We perform an integration by parts:
$$I_{4,1}=\Im\left[\sum_{j,k\neq x;\,j\neq k;\,j+k\neq 2x}e^{-i\frac{(j-k)(j+k-2x)}{4t}}\frac{(-i)4t^2}{(j-k)(j+k-2x)}\frac{x-k}{x-j}\right.$$
$$\left.\times e^{i(|\alpha_j|^2-|\alpha_k|^2)\log \sqrt{t}}(\overline{\alpha_j+R_j(t)})(\alpha_k+R_k(t))\frac{T(t,x)}{t}\right]_{t_1}^{t_2}$$
$$+4\Im\int_{t_1}^{t_2}\sum_{j,k\neq x;\,j\neq k;\,j+k\neq 2x}e^{-i\frac{(j-k)(j+k-2x)}{4t}}\frac{i}{(j-k)(j+k-2x)}\frac{x-k}{x-j}$$
$$\times (te^{i(|\alpha_j|^2-|\alpha_k|^2)\log \sqrt{t}}(\overline{\alpha_j+R_j(t)})(\alpha_k+R_k(t))T(t,x))_t\,dt$$
$$=:I_{4,1}^0+I_{4,1}^1+I_{4,1}^2+I_{4,1}^3+I_{4,1}^4+I_{4,1}^5,$$
where $I_{4,1}^0$ stands for the boundary term and $I_{4,1}^1,I_{4,1}^2,I_{4,1}^3,I_{4,1}^4$ and $I_{4,1}^5$ are the terms after differentiating in time the quintuple product in the integral. For the boundary term we have
$$|I_{4,1}^0|\leq 4t_2\sum_{j,k\neq x;\,j\neq k;\,j+k\neq 2x}\frac{|x-k|}{|j-k||j+k-2x||x-j|}|\alpha_j+R_j(t_2)||\alpha_k+R_k(t_2)|$$
$$+4t_1\sum_{j,k\neq x;\,j\neq k;\,j+k\neq 2x}\frac{|x-k|}{|j-k||j+k-2x||x-j|}|\alpha_j+R_j(t_1)||\alpha_k+R_k(t_1)|.$$
As for $j\neq k$
\begin{equation}\label{estsum1}\frac{|x-k|}{|j-k||j+k-2x||x-j|}\leq \frac{|x-j|+|j+k-2x|}{|j-k||j+k-2x||x-j|}\leq\frac1{|j+k-2x|}+\frac1{|x-j|},\end{equation}
we have for $x\in\frac 12\mathbb Z$ 
$$|I_{4,1}^0|\leq Ct_2(\|\{\alpha_j\}\|_{l^{1}}+\|\{R_j\}\|_{L^\infty(0,t_2)l^{1}})^2.$$
while for $x\notin \frac 12\mathbb Z$ we obtain
$$|I_{4,1}^0|\leq Ct_2\left(\frac 1{d(x,\frac 12\mathbb Z)}+\frac1{d(x,\mathbb Z)}\right)(\|\{\alpha_j\}\|_{l^{1}}+\|\{R_j\}\|_{L^\infty(0,t_2)l^{1}})^2.$$
The terms $I_{4,1}^1,I_{4,1}^2,I_{4,1}^3$ and $I_{4,1}^4$ can be upper-bounded as $I_{4,1}^0$ by using moreover for $I_{4,1}^3$ and $I_{4,1}^4$ the bound $\partial_t R_j(t))\leq \frac Ct$ on $(0,1)$. The last term $I_{4,1}^5$ involves, in view of \eqref{Tt}, 
$$T_t(t,x)=\Im (\overline{u_x}\,N)(t,x)=\Im e^{-iM\log\sqrt{t}}\sum_re^{i|\alpha_r|^2\log \sqrt{t}}(\overline{\alpha_r+R_r(t)})\frac{e^{-i\frac{(x-r)^2}{4t}}}{\sqrt{t}}(-i)\frac{(x-r)}{2t}N(t,x)$$ 
so 
$$I_{4,1}^5=-\frac1{2}\Im\int_{t_1}^{t_2}\sum_{j,k\neq x;\,j\neq k;\,j+k\neq 2x}e^{-i\frac{(j-k)(j+k-2x)}{4t}}\frac{i}{(j-k)(j+k-2x)}\frac{x-k}{x-j}e^{i(|\alpha_j|^2-|\alpha_k|^2)\log \sqrt{t}}$$
$$\times (\overline{\alpha_j+R_j(t)})(\alpha_k+R_k(t))\Re e^{-iM\log\sqrt{t}}\sum_re^{i|\alpha_r|^2\log \sqrt{t}}(\overline{\alpha_r+R_r(t)})e^{-i\frac{(x-r)^2}{4t}}(x-r)N(t,x)\,\frac{dt}{\sqrt{t}},$$
and in particular
$$|I_{4,1}^5|\leq C\int_{t_1}^{t_2}\sum_{j,k\neq x;\,j\neq k;\,j+k\neq 2x}\frac{|x-k|}{|j-k||j+k-2x||x-j|}$$
$$\times |\alpha_j+R_j(t)||\alpha_k+R_k(t)|\sum_r|\alpha_r+R_r(t)||x-r|\,\frac{dt}{\sqrt{t}}.$$
We can write
$$\sum_r|\alpha_r+R_r(t)||x-r|\leq  C(1+|x|)(\|\{\alpha_j\}\|_{l^{2,\frac 32^+}}+\|\{R_j\}\|_{L^\infty(0,t_2)l^{2,\frac 32^+}}),$$
so by using \eqref{estsum1} we get for $x\in\frac 12\mathbb Z$:
$$|I_{4,1}^5|\leq C(1+|x|)\sqrt{t_2}(\|\{\alpha_j\}\|_{l^{1}}+\|\{R_j\}\|_{L^\infty(0,t_2)l^{1}})^2(\|\{\alpha_j\}\|_{l^{2,\frac 32^+}}+\|\{R_j\}\|_{L^\infty(0,t_2)l^{2,\frac 32^+}}),$$
while for $x\notin \frac 12\mathbb Z$ we obtain:
$$|I_{4,1}^5|\leq C\sqrt{t_2}(1+|x|)\left(\frac 1{d(x,\frac 12\mathbb Z)}+\frac1{d(x,\mathbb Z)}\right)$$
$$\times (\|\{\alpha_j\}\|_{l^{1}}+\|\{R_j\}\|_{L^\infty(0,t_2)l^{1}})^2(\|\{\alpha_j\}\|_{l^{2,\frac 32^+}}+\|\{R_j\}\|_{L^\infty(0,t_2)l^{2,\frac 32^+}}).$$
Therefore the proof of the Lemma is completed. 
\end{proof}

\subsection{Segments of the limit curve at $t=0$}\label{sectsegm}
\begin{lemma}\label{lemmasegm}
Let $n\in\mathbb Z$ and $x_1,x_2\in (n,n+1)$. Then
$$T(0,x_1)=T(0,x_2).$$
In particular, we recover that $\chi(0)$ is a polygonal line, and might have corners only at integer locations.
\end{lemma}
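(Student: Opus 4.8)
The plan is to exploit that, by \eqref{Tx}, the spatial variation of $T$ is governed by $\Re(\overline u\,N)$, so that for every $t>0$
$$T(t,x_2)-T(t,x_1)=\int_{x_1}^{x_2}T_x(t,x)\,dx=\Re\int_{x_1}^{x_2}\overline{u(t,x)}\,N(t,x)\,dx.$$
Since Lemma \ref{lemmaconvT} guarantees that both $T(t,x_1)$ and $T(t,x_2)$ converge as $t\to0$, it suffices to prove that the right-hand side tends to $0$. Writing $a_k(t)=e^{i|\alpha_k|^2\log\sqrt t}\,\overline{(\alpha_k+R_k(t))}$ and inserting the ansatz \eqref{ansatzfinal}, the integral becomes $\sum_k a_k(t)\int_{x_1}^{x_2}\frac{e^{-i(x-k)^2/4t}}{\sqrt t}\,N(t,x)\,dx$, whose naive size is $O(1/\sqrt t)$; the whole point is that the quadratic phase oscillates, since $x\in(n,n+1)$ forces $x\neq k$ for every $k\in\mathbb Z$.

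First I would integrate by parts in $x$ using $\frac{d}{dx}e^{-i(x-k)^2/4t}=-i\frac{x-k}{2t}e^{-i(x-k)^2/4t}$. The boundary contributions carry a factor $\frac{\sqrt t}{|x_i-k|}$ and, summed against $\{a_k\}\in l^1$ (recall $l^{2,s}\subset l^1$) with $|x_i-k|\ge d(x_i,\mathbb Z)>0$, are $O(\sqrt t)$. In the remaining integral the derivative falls on $\frac{\sqrt t\,N}{x-k}$; the term in which it hits the denominator is again $O(\sqrt t)$, while the term coming from $N_x=-uT$ of \eqref{Nx} is the essential one. Substituting $u=\sum_m c_m(t)\frac{e^{i(x-m)^2/4t}}{\sqrt t}$ produces the product phase $e^{i(k-m)(2x-k-m)/4t}$.

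The key observation is the splitting into diagonal and off-diagonal contributions. For $m=k$ the phase is identically $1$, so this piece is non-oscillatory; however it equals $2i\sum_k a_k c_k\int_{x_1}^{x_2}\frac{T(t,x)}{x-k}\,dx$, and since $a_k c_k=|\alpha_k+R_k(t)|^2$ is real and $T$ is a real vector, it is purely imaginary and hence annihilated by the outer $\Re$. This is exactly the analogue, in the $x$-variable, of the cancellation of resonant terms performed in the $t$-variable in Lemma \ref{lemmaconvT}. For $m\neq k$ the phase $\psi_{k,m}(x)=\frac{(k-m)(2x-k-m)}{4t}$ satisfies $\psi_{k,m}'=\frac{k-m}{2t}\neq0$ uniformly in $x$, so a second integration by parts gains a factor $t/|k-m|$; the only delicate point is that $T$ itself oscillates in $x$ on scale $\sqrt t$ (as $|T_x|\le C/\sqrt t$), but when the derivative lands on $T$ the gained $t$ beats this $1/\sqrt t$, leaving an $O(\sqrt t)$ bound. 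Summability of the resulting double series $\sum_{k\neq m}\frac{|a_k||c_m|}{|k-m|}$ follows from $\{a_k\},\{c_m\}\in l^1$ together with $\frac1{|k-m|}\le1$.

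Putting these together, $\Re\int_{x_1}^{x_2}\overline u\,N\,dx=O(\sqrt t)\to0$, whence $T(0,x_1)=T(0,x_2)$. The main obstacle is the off-diagonal analysis: one must verify that the rapid oscillation of the Gaussian phases genuinely dominates the (also rapid, but slower) oscillation of the frame vectors $T$ and $N$, and that all the $k,m$-summations converge uniformly in $t$; everything else is routine non-stationary-phase bookkeeping.
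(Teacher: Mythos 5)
Your argument is correct and follows essentially the same route as the paper's proof: write $T(t,x_2)-T(t,x_1)=\int_{x_1}^{x_2}\Re(\overline u\,N)\,dx$, integrate by parts in $x$ against the quadratic phase, substitute $N_x=-uT$, discard the diagonal $m=k$ terms by the reality of $|\alpha_k+R_k|^2\,T$, and integrate by parts once more on the off-diagonal phase $e^{i(k-m)x/2t}$, using $|T_x|\le C/\sqrt t$ to conclude the whole expression is $O(\sqrt t)$. The only cosmetic difference is that the paper first replaces $\alpha_k+R_k$ by $\alpha_k$ using $\|\{R_k(t)\}\|_{l^1}=O(t^\gamma)$ with $\gamma>1/2$ before the second integration by parts, which you avoid by carrying the full coefficients; both are fine.
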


\begin{proof}
From Lemma \ref{lemmaconvT} we have
\begin{equation}\label{convTdiff}T(0,x_1)-T(0,x_2)=\underset{t\rightarrow 0}{\lim} \,(T(t,x_1)-T(t,x_2)).
\end{equation}
In view of \eqref{Tx} we compute
$$T(t,x_1)-T(t,x_2)=\int_{x_1}^{x_2}\Re(\overline{u}N(t,x))\,dx$$
$$=\Re e^{-iM\log\sqrt{t}}\int_{x_1}^{x_2}\sum_je^{i|\alpha_j|^2\log \sqrt{t}}(\overline{\alpha_j+R_j(t)})\frac{e^{-i\frac{(x-j)^2}{4t}}}{\sqrt{t}}\,N(t,x)\,dx.$$
In this case the integral is well defined, but we need decay in time. 
For this purpose we perform an integration by parts, that is allowed on $(x_1,x_2)\subset(n,n+1)$:
$$T(t,x_1)-T(t,x_2)=\left[\Re e^{-iM\log\sqrt{t}}\sum_{j}e^{i|\alpha_j|^2\log \sqrt{t}}(\overline{\alpha_j+R_j(t)})\frac{e^{-i\frac{(x-j)^2}{4t}}}{\sqrt{t}}i\frac{2t}{x-j}\,N(t,x)\right]_{x_1}^{x_2}$$
$$+2\sqrt{t}\Im e^{-iM\log\sqrt{t}}\int_{x_1}^{x_2}\sum_{j}e^{i|\alpha_j|^2\log \sqrt{t}}(\overline{\alpha_j+R_j(t)})e^{-i\frac{(x-j)^2}{4t}}\left(\frac 1{x-j}\,N(t,x)\right)_x\,dx$$
$$=O(\sqrt{t})+2\sqrt{t}\Im e^{-iM\log\sqrt{t}}\int_{x_1}^{x_2}\sum_{j}e^{i|\alpha_j|^2\log \sqrt{t}}(\overline{\alpha_j+R_j(t)})e^{-i\frac{(x-j)^2}{4t}}\frac 1{x-j}\,N_x(t,x)\,dx.$$
As by \eqref{Nx} we have $N_x=-uT$,
$$T(t,x_1)-T(t,x_2)=O(\sqrt{t})$$
$$-2\Im\sum_{j,k}e^{i(|\alpha_j|^2-|\alpha_k|^2)\log \sqrt{t}}(\overline{\alpha_j+R_j(t)})(\alpha_k+R_k(t))\int_{x_1}^{x_2}\frac{e^{-i\frac{(x-j)^2-(x-k)^2}{4t}}}{x-j}T(t,x)\,dx.$$
The summation holds only for $j\neq k$, as for $j=k$ the contribution is null. Moreover, from \eqref{decayansatzcubic} we have $\|\{R_j(t)\}\|_{l^1}=O(t^\gamma)$, $\gamma>1/2$, so 
$$T(t,x_1)-T(t,x_2)=O(\sqrt{t})-2\Im\sum_{j\neq k}e^{i(|\alpha_j|^2-|\alpha_k|^2)\log \sqrt{t}}\overline{\alpha_j}\alpha_ke^{i\frac{j^2-k^2}{4t}}\int_{x_1}^{x_2}\frac{e^{i\frac{(j-k)x}{2t}}}{x-j}T(t,x)\,dx.$$
To get decay in time we need to perform again an integration by parts:
$$T(t,x_1)-T(t,x_2)=O(\sqrt{t})-\left[2\Im\sum_{j\neq k}e^{i(|\alpha_j|^2-|\alpha_k|^2)\log \sqrt{t}}\overline{\alpha_j}\alpha_ke^{i\frac{j^2-k^2}{4t}}\frac{e^{i\frac{(j-k)x}{2t}}}{x-j}\frac{2t}{i(j-k)}T(t,x)\right]_{x_1}^{x_2}$$
$$+4t\,\Re\sum_{j\neq k}e^{i(|\alpha_j|^2-|\alpha_k|^2)\log \sqrt{t}}\overline{\alpha_j}\alpha_k\frac{e^{i\frac{j^2-k^2}{4t}}}{j-k}\int_{x_1}^{x_2}e^{i\frac{(j-k)x}{2t}}\left(\frac 1{x-j}T(t,x)\right)_x\,dx$$
$$=O(\sqrt{t})+4t\,\Re\sum_{j\neq k}e^{i(|\alpha_j|^2-|\alpha_k|^2)\log \sqrt{t}}\overline{\alpha_j}\alpha_k\frac{e^{i\frac{j^2-k^2}{4t}}}{j-k}\int_{x_1}^{x_2}e^{i\frac{(j-k)x}{2t}}\frac 1{x-j}T_x(t,x)\,dx.$$
From \eqref{Tx} we have $T_x=\Re(\overline{u}\,N)$ so finally
$$T(t,x_1)-T(t,x_2)=O(\sqrt{t})+4t\,\Re\sum_{j\neq k}e^{i(|\alpha_j|^2-|\alpha_k|^2)\log \sqrt{t}}\overline{\alpha_j}\alpha_k\frac{e^{i\frac{j^2-k^2}{4t}}}{j-k}$$
$$\times \int_{x_1}^{x_2}\frac{e^{i\frac{(j-k)x}{2t}}}{x-j}\Re e^{-iM\log\sqrt{t}}\left(\sum_re^{i|\alpha_r|^2\log \sqrt{t}}(\overline{\alpha_r+R_r(t)})\frac{e^{-i\frac{(x-r)^2}{4t}}}{\sqrt{ t}}\,N(t,x)\right)\,dx=O(\sqrt{t}).$$
Therefore in view of \eqref{convTdiff} we have indeed
$$T(0,x_1)-T(0,x_2)=0.$$
\end{proof}

\subsection{Recovering self-similar structures through self-similar paths}\label{sectcorners}
In this subsection we shall use the results in \cite{GRV} that characterize all the selfsimilar solutions of BF and give their corresponding asymptotics (see Theorem 1 in \cite{GRV}). 

Let us denote by $A^\pm_{|\alpha_k|}\in\mathbb S^2$ the directions of the corner generated at time $t=0$ by the canonical self-similar solution $\chi_{|\alpha_k|}(t,x)$ of the binormal flow of curvature $\frac{|\alpha_k|}{\sqrt{t}}$:
$$A^\pm_{|\alpha_k|}:=\partial_x\chi_{|\alpha_k|}(0,0^\pm).$$
We recall also that the frame of the profile (i.e. $\chi_{|\alpha_k|}(1)$) satisfies the system
\begin{equation}\label{systselfs}
\left\{\begin{array}{c}\partial_xT_{|\alpha_k|}(1,x)=\Re(|\alpha_k|e^{-i\frac{x^2}{4}}N_{|\alpha_k|}(1,x)),\\\partial_xN_{|\alpha_k|}(1,x)=-|\alpha_k|e^{i\frac{x^2}{4}}T_{|\alpha_k|}(1,x),\end{array}\right.\end{equation}
and that for $x\rightarrow\pm\infty$ there exist $B^\pm_{|\alpha_k|}\perp A^\pm_{|\alpha_k|}$, with $\Re(B^\pm_{|\alpha_k|}),\Im(B^\pm_{|\alpha_k|})\in\mathbb S^2$, such that
\begin{equation}\label{asselfs}T_{|\alpha_k|}(1,x)=A^\pm_{|\alpha_k|}+\mathcal O(\frac 1x),\quad e^{i|\alpha_k|^2\log |x|}N_{|\alpha_k|}(1,x)=B^\pm_{|\alpha_k|}+\mathcal O(\frac 1{|x|}).\end{equation}

\begin{lemma}\label{lemmassstructure}
Let $t_n$ be a sequence of positive times converging to zero.  
Up to a subsequence, there exists for all $x\in\mathbb R$ a limit
$$(T_*(x),N_*(x))=\underset{n\rightarrow \infty}{\lim}\,(T(t_n,k+x\sqrt{t_n}),e^{-iM\log\sqrt{t_n}}e^{i|\alpha_k|^2\log \sqrt{t_n}}N(t_n,k+x\sqrt{t_n})),$$
and  there exists a unique rotation $\Theta_k$ such that
\begin{equation}\label{ssstructure}\left\{\begin{array}{c}T_*(x)=\Theta_k(T_{|\alpha_k|}(x)),\\ \Re(e^{i Arg\alpha_k}N_*(x))=\Theta_k(\Re(N_{|\alpha_k|}(x))),\\ \Im(e^{i Arg\alpha_k}N_*(x))=\Theta_k(\Im(N_{|\alpha_k|}(x))).\end{array}\right.\end{equation}
Moreover, for $x\rightarrow\pm\infty$
\begin{equation}\label{ssstructurelim}T_*(x)=\Theta_k(A^\pm_{|\alpha_k|})+\mathcal O(\frac 1{|x|}), \quad e^{i|\alpha_k|^2\log|x|}e^{i Arg(\alpha_k)}N_*(x)=\Theta_k(B^\pm_{|\alpha_k|})+\mathcal O(\frac 1{|x|}).\end{equation}
\end{lemma}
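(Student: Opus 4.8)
The plan is to pass to self-similar variables centered at the corner $x=k$ and to recognize, in the limit $t_n\to 0$, the profile system \eqref{systselfs}. Write $\mathbf T_n(x):=T(t_n,k+x\sqrt{t_n})$ and $\mathbf N_n(x):=e^{i|\alpha_k|^2\log\sqrt{t_n}}N(t_n,k+x\sqrt{t_n})$. From the spatial frame laws \eqref{Tx}--\eqref{Nx} these satisfy, for every $n$,
$$\partial_x\mathbf T_n=\Re(\overline{g_n}\,\mathbf N_n),\qquad \partial_x\mathbf N_n=-g_n\,\mathbf T_n,$$
where, by \eqref{ansatzfinal},
$$g_n(x):=e^{i|\alpha_k|^2\log\sqrt{t_n}}\sqrt{t_n}\,u(t_n,k+x\sqrt{t_n})=\sum_{j}e^{i(|\alpha_k|^2-|\alpha_j|^2)\log\sqrt{t_n}}(\alpha_j+R_j(t_n))\,e^{i\frac{(k-j+x\sqrt{t_n})^2}{4t_n}}.$$
The diagonal term $j=k$ equals $(\alpha_k+R_k(t_n))e^{ix^2/4}$ and, by \eqref{decayansatzcubic}, tends to $\alpha_k e^{ix^2/4}$ uniformly on compact $x$-sets; I denote by $g_n^{\neq k}$ the remaining, rapidly oscillating, sum.

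First I would establish compactness. Since $(\mathbf T_n,\Re\mathbf N_n,\Im\mathbf N_n)$ is an orthonormal frame at each $x$, and $\|g_n\|_{L^\infty}\le\|\{\alpha_j\}\|_{l^1}+\sup_t\|\{R_j(t)\}\|_{l^1}\le C$ (using $l^{2,s}\subset l^1$ and \eqref{decayansatzcubic}), the system above gives a uniform Lipschitz bound in $x$. Fixing the base-point value at $x=0$, which converges only after passing to a subsequence by compactness of the set of orthonormal frames, and applying Arzel\`a--Ascoli with a diagonal extraction over $[-L,L]$, I obtain a subsequence along which $(\mathbf T_n,\mathbf N_n)\to(T_*,N_*)$ uniformly on compacts, $(T_*,\Re N_*,\Im N_*)$ again being an orthonormal frame.

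Next I would pass to the limit in the integrated equations $\mathbf T_n(x)=\mathbf T_n(0)+\int_0^x\Re(\overline{g_n}\,\mathbf N_n)\,dx'$ and the analogue for $\mathbf N_n$. The diagonal contribution converges to $\int_0^x\Re(\overline{\alpha_k}\,e^{-ix'^2/4}N_*)\,dx'$ by uniform convergence. The heart of the argument, and the main obstacle, is to show that the oscillatory tail $\int_0^x\Re(\overline{g_n^{\neq k}}\,\mathbf N_n)\,dx'$ tends to $0$. For this I would integrate by parts in $x'$, exploiting that the phase $\frac{(k-j+x'\sqrt{t_n})^2}{4t_n}$ has derivative $\frac{k-j}{2\sqrt{t_n}}+\frac{x'}{2}$, bounded below in modulus by $\frac{|k-j|}{4\sqrt{t_n}}$ on $[-L,L]$ for $t_n$ small; each $j\neq k$ term then gains a factor $\mathcal{O}(\sqrt{t_n}/|k-j|)$, the boundary and bulk contributions being controlled through $\|g_n\|_{L^\infty}\le C$ and $\partial_{x'}\mathbf N_n=-g_n\mathbf T_n$. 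Summing over $j$ by Cauchy--Schwarz against $\sum_{j\neq k}|k-j|^{-2}<\infty$ together with $\{\alpha_j\}\in l^2$ yields a total of size $\mathcal{O}(\sqrt{t_n})\to 0$. The delicate point is exactly this balance between the non-stationary phase gain $1/|k-j|$ and the mere square-summability of the coefficients.

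Combining these, $(T_*,N_*)$ solves $\partial_x T_*=\Re(\overline{\alpha_k}\,e^{-ix^2/4}N_*)$ and $\partial_x N_*=-\alpha_k e^{ix^2/4}T_*$. Setting $\mathcal N:=e^{i Arg(\alpha_k)}N_*$, the modulated normal of \eqref{ssstructure}, so as to cancel the phase of $\alpha_k$, the pair $(T_*,\mathcal N)$ reduces exactly to the profile system \eqref{systselfs}. Since $(T_*,\Re\mathcal N,\Im\mathcal N)$ and $(T_{|\alpha_k|},\Re N_{|\alpha_k|},\Im N_{|\alpha_k|})$ are two orthonormal solutions of the same linear ODE, they differ by a constant rotation $\Theta_k\in SO(3)$, uniquely determined by matching the frames at $x=0$; this is \eqref{ssstructure}. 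Finally, \eqref{ssstructurelim} follows immediately by inserting the profile asymptotics \eqref{asselfs} into the exact identity \eqref{ssstructure}, since $(T_*,\mathcal N)$ coincides with the rotated profile for all $x$, in particular as $x\to\pm\infty$.
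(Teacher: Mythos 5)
Your proposal is correct and follows essentially the same route as the paper: self-similar rescaling of the frame equations, uniform bounds plus Arzel\`a--Ascoli for compactness, isolation of the diagonal $j=k$ term, a non-stationary-phase integration by parts in $x$ (gaining $\sqrt{t_n}/|k-j|$, summable by Cauchy--Schwarz) to kill the off-diagonal oscillatory tail, identification of the limit with the rotated profile system \eqref{systselfs}, and the asymptotics from \eqref{asselfs}. The only cosmetic difference is that you pass to the limit in the integrated equations directly, while the paper tests the oscillatory remainder against $\mathcal C^\infty_c$ functions to get a weak solution and then invokes analyticity of the ODE coefficients to upgrade it to a strong one.
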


\begin{proof}
Let $t_n$ be a sequence of positive times converging to zero. We introduce for $x\in\mathbb R$ the functions
$$(T_n(x),N_n(x))=(T(t_n,k+x\sqrt{t_n}),e^{-iM\log\sqrt{t_n}}e^{i|\alpha_k|^2\log \sqrt{t_n}}N(t_n,k+x\sqrt{t_n})).$$
This sequence is uniformly bounded. In view of \eqref{Tx} and \eqref{Nx} we have
$$T_n'(x)=\sqrt{t_n}\Re(\overline{u}N)(t_n,k+x\sqrt{t_n})$$
$$=\Re e^{-iM\log\sqrt{t_n}}\sum_je^{i|\alpha_j|^2\log \sqrt{t_n}}(\overline{\alpha_j+R_j(t_n))}e^{-i\frac{(k+x\sqrt{t_n}-j)^2}{4t_n}}\, N(t_n,k+x\sqrt{t_n})), $$
and
$$N_n'(x)=-e^{-iM\log\sqrt{t_n}}e^{i|\alpha_k|^2 \log \sqrt{t_n}}\sqrt{t_n}(uT)(t_n,k+x\sqrt{t_n})$$
$$=-\sum_je^{i(|\alpha_k|^2-|\alpha_j|^2)\log \sqrt{t_n}}(\alpha_j+R_j(t_n))e^{i\frac{(k+x\sqrt{t_n}-j)^2}{4t_n}}\, T(t_n,k+x\sqrt{t_n}).$$
Therefore the sequence $(T_n'(x),N_n'(x))$ is also uniformly bounded. These two facts give via Arzela-Ascoli's theorem the existence of a limit in $n$ (of a subsequence, that we denote again $(T_n(x),N_n(x))$):
$$\exists\, \underset{n\rightarrow \infty}{\lim}\,(T_n(x),N_n(x))=:(T_*(x),N_*(x)).$$
Moreover, as $\|\{R_j(t_n)\}\|_{l^1}=o(n)$ we can write
$$T_n'(x)=\Re e^{-iM\log\sqrt{t_n}}\sum_je^{i|\alpha_j|^2\log \sqrt{t_n}}\overline{\alpha_j}e^{-i\frac{(k+x\sqrt{t_n}-j)^2}{4t_n}}\, N(t_n,k+x\sqrt{t_n}))+o(n)N_n(x)$$
$$=\Re(\overline{\alpha_k}e^{-i\frac{x^2}{4}}\, N_n(x))+\Re(f_n(x)N_n(x))+o(n)N_n(x),$$
where
$$f_n(x)=\sum_{j\neq k}e^{i(|\alpha_j|^2-|\alpha_k|^2)\log \sqrt{t_n}}\overline{\alpha_j}e^{-i\frac{x^2}{4}+ix\frac{j-k}{2\sqrt{t_n}}-i\frac{(k-j)^2}{4t_n}}.$$
For a test function $\psi\in\mathcal C^\infty_c(\mathbb R)$ we have by integrating by parts, avoiding in case a region os size $o(n)$ around $x=0$,
$$\langle f_n(x),\psi(x)\rangle=\int \sum_{j\neq k}e^{i(|\alpha_j|^2-|\alpha_k|^2)\log \sqrt{t_n}}\overline{\alpha_j}e^{-i\frac{x^2}{4}+ix\frac{j-k}{2\sqrt{t_n}}-i\frac{(k-j)^2}{4t_n}}\psi(x)\,dx$$
$$=-\int \sum_{j\neq k}e^{i(|\alpha_j|^2-|\alpha_k|^2)\log \sqrt{t_n}}\overline{\alpha_j}2\sqrt{t_n}\frac{e^{ix\frac{j-k}{2\sqrt{t_n}}-i\frac{(k-j)^2}{4t_n}}}{i(j-k)}(e^{-i\frac{x^2}{4}}\psi(x))_x\,dx=C(\psi)o(n).$$
Similarly we obtain
$$N_n'(x)=-\alpha_ke^{i\frac{x^2}{4}}\, T_n(x)+g_n(x)T_n(x)+o(n)T_n(x),$$
with $g_n=o(n)$ in the weak sense. 
Therefore $(T_*(x),e^{i Arg(\alpha_k)}N_*(x))$ satisfies system \eqref{systselfs} in the weak sense. As the coefficients involved in the ODE are analytic we conclude that $(T_*(x),e^{i Arg(\alpha_k)}N_*(x))$ satisfies system \eqref{systselfs} in the strong sense, as $(T_{|\alpha_k|}(x),N_{|\alpha_k|}(x))$ does. Therefore there exists a unique rotation $\Theta_k$ such that \eqref{ssstructure} holds. 
We obtain \eqref{ssstructurelim} as a consequence of \eqref{asselfs}.
\end{proof}

\subsection{Recovering the curvature angles of the initial data}\label{sectcorners}
\begin{lemma}\label{lemmaangles}
Let $k\in\mathbb Z$. Then, with the notations of the previous subsection,
$$T(0,k^\pm)=\Theta_k(A^\pm_{|\alpha_k|}).$$
In particular, in view of \eqref{angless} and \eqref{moduluscoef} we recover that $\chi(0)$ is a polygonal line with corners at the same locations as $\chi_0$, and of same angles.\footnote{ This also implies that the rotation $\Theta_k$ does not depend on the choice of the sequence $t_n$. }
\end{lemma}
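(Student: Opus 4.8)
The plan is to read off the one-sided traces $T(0,k^\pm)$ along self-similar paths issued from the corner $x=k$ and to identify them with the asymptotic corner directions $\Theta_k(A^\pm_{|\alpha_k|})$ furnished by Lemma \ref{lemmassstructure}; I treat only $T(0,k^+)$, the case $k^-$ being entirely analogous. Fix a sequence $t_n\downarrow 0$ realizing the limit $(T_*,N_*)$ and the rotation $\Theta_k$ of Lemma \ref{lemmassstructure}, and fix a base point $x_0\in(k,k+1)$. Since $T(0,\cdot)$ is constant on $(k,k+1)$ by Lemma \ref{lemmasegm}, I have
$$T(0,k^+)=T(0,x_0)=\lim_{n\to\infty}T(t_n,x_0)=\lim_{n\to\infty}T_n(\xi_n),\qquad \xi_n:=\frac{x_0-k}{\sqrt{t_n}}\to+\infty,$$
so the difficulty is that $T_n$ must be evaluated at the \emph{diverging} abscissa $\xi_n$, whereas Lemma \ref{lemmassstructure} only controls $T_n$ at fixed abscissae. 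To bridge the two scales I insert an auxiliary fixed radius $R>0$ and decompose
$$T_n(\xi_n)-\Theta_k(A^+_{|\alpha_k|})=\big(T_n(\xi_n)-T_n(R)\big)+\big(T_n(R)-T_*(R)\big)+\big(T_*(R)-\Theta_k(A^+_{|\alpha_k|})\big).$$
The middle term tends to $0$ as $n\to\infty$ for each fixed $R$ by Lemma \ref{lemmassstructure}, and the last term is $O(1/R)$ by \eqref{ssstructurelim}; everything thus reduces to the uniform bound
$$\Big|T_n(\xi_n)-T_n(R)\Big|=\Big|\int_R^{\xi_n}T_n'(\eta)\,d\eta\Big|\le \frac{C}{R}+\varepsilon_n,$$
with $C$ independent of $n,R$ and $\varepsilon_n\to 0$. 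Letting first $n\to\infty$ and then $R\to\infty$ then yields $T(0,k^+)=\Theta_k(A^+_{|\alpha_k|})$.

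The estimate of $\int_R^{\xi_n}T_n'$ is the core of the argument. As in Lemma \ref{lemmassstructure}, \eqref{Tx} and \eqref{ansatzfinal} give
$$T_n'(\eta)=\Re\sum_j e^{i(|\alpha_j|^2-|\alpha_k|^2)\log\sqrt{t_n}}\,\overline{(\alpha_j+R_j(t_n))}\,e^{-i\frac{(k+\eta\sqrt{t_n}-j)^2}{4t_n}}\,N_n(\eta),$$
and I split this sum into the diagonal $j=k$ and the off-diagonal $j\ne k$. For $j=k$ the phase is the pure self-similar phase $e^{-i\eta^2/4}$; integrating by parts through $\partial_\eta e^{-i\eta^2/4}=-\tfrac{i\eta}{2}e^{-i\eta^2/4}$ produces boundary terms of size $O(1/R)+O(1/\xi_n)$ together with an interior term in which the dangerous contribution comes from differentiating $N_n$. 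Here the key cancellation occurs: by \eqref{Nx} the $j=k$ part of $N_n'$ equals $-(\alpha_k+R_k(t_n))e^{i\eta^2/4}T_n$, so the oscillations cancel and the offending term becomes $\tfrac{2i}{\eta}|\alpha_k+R_k(t_n)|^2\,T_n(\eta)$, a purely imaginary scalar times the \emph{real} vector $T_n$, which is annihilated by $\Re$; the remaining interior pieces either decay like $1/\eta^2$, integrating to $O(1/R)$, or carry the off-diagonal oscillation of $N_n'$ and are handled exactly as the $j\ne k$ case. For $j\ne k$ the $\eta$-phase $-\tfrac{\eta^2}{4}-\tfrac{(k-j)\eta}{2\sqrt{t_n}}$ has derivative $-\tfrac{\eta}{2}-\tfrac{k-j}{2\sqrt{t_n}}$, which, because $0<\xi_n<1/\sqrt{t_n}$ (as $x_0-k<1$), has no zero on $[R,\xi_n]$ and is bounded below in modulus by $c(x_0)/\sqrt{t_n}$; one integration by parts therefore gains a factor $\sqrt{t_n}$, and after summing in $j$ --- and in the auxiliary index created upon substituting $N_n'$ and $T_n'$ --- against the $\ell^1$ and weighted $l^{2,s}$ bounds on $\{\alpha_j\}$, $\{R_j(t_n)\}$ already exploited in Lemmas \ref{lemmaconvT}--\ref{lemmasegm}, these contributions are $O(\sqrt{t_n})=\varepsilon_n\to 0$.

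The main obstacle is precisely this uniform-in-$n$ control of $\int_R^{\xi_n}T_n'$ over a window of length $\xi_n\to\infty$: one must simultaneously extract self-similar decay from the diagonal term --- hinging on the vanishing of the dangerous $1/\eta$ contribution under $\Re$ --- and non-stationary decay from the off-diagonal terms --- hinging on the fact that no integer frequency can be stationary inside a single unit cell, guaranteed by $\xi_n<1/\sqrt{t_n}$ --- while keeping every discrete sum summable. Iterated integration by parts is needed to absorb the off-diagonal parts of $N_n'$ and $T_n'$, but these only improve the powers of $\sqrt{t_n}$. Once $T(0,k^\pm)=\Theta_k(A^\pm_{|\alpha_k|})$ is established, the geometric conclusion is immediate: as $\Theta_k$ is a rotation it preserves the angle between $A^+_{|\alpha_k|}$ and $A^-_{|\alpha_k|}$, which by the self-similar corner law \eqref{angless} is the $\theta$ satisfying $\sin(\theta/2)=e^{-\pi|\alpha_k|^2/2}$; by the choice \eqref{moduluscoef} this equals the angle $\theta_n$ of $\chi_0$ at $x=k$, so $\chi(0)$ has a corner of the prescribed angle at each integer corner location of $\chi_0$, and no corner elsewhere by Lemma \ref{lemmasegm}.
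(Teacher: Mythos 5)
Your argument is correct in outline and reaches the right conclusion, but it takes a genuinely different -- and much more laborious -- route than the paper for the central step. The paper evaluates $T$ at the \emph{moving} point $k+x\sqrt{t_n}$ with $x$ fixed and large: by Lemma \ref{lemmasegm} this point has the same trace $T(0,k^+)$, by \eqref{convT} the distance $|T(0,k+x\sqrt{t_n})-T(t_n,k+x\sqrt{t_n})|$ is at most $C(1+|k|)\sqrt{t_n}/d(k+x\sqrt{t_n},\mathbb Z)\leq C(1+|k|)/x$, uniformly in $n$, and Lemma \ref{lemmassstructure} together with \eqref{ssstructurelim} handles the remaining two terms of the triangle inequality; no new estimate is needed. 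You instead fix the physical point $x_0$ and let the self-similar abscissa $\xi_n$ diverge, which forces you to prove the uniform bound $|T_n(\xi_n)-T_n(R)|\leq C/R+\varepsilon_n$ from scratch by oscillatory-integral methods. That bound is true, but note it already follows in two lines from the existing lemmas: writing $T_n(\xi_n)-T_n(R)=T(t_n,x_0)-T(t_n,k+R\sqrt{t_n})$ and inserting $T(0,x_0)=T(0,k+R\sqrt{t_n})$ (Lemma \ref{lemmasegm}), the two remaining differences are controlled by \eqref{convT} as $\varepsilon_n$ and $C(1+|k|)/R$ respectively. Your direct derivation is therefore redundant, and it is also the thinnest part of the write-up: after one integration by parts in $\eta$ the term where the derivative falls on $N_n$ carries only a factor $\sqrt{t_n}/|y-j|$ against an integrand of size $O(1)$ over a window of length $\xi_n\sim t_n^{-1/2}$, hence is $O(1)$ rather than $O(\sqrt{t_n})$; one genuinely needs the second integration by parts on the product phase $e^{i((y-l)^2-(y-j)^2)/4t_n}$, whose resonant set $l=j$ must then be killed by the $\Re(\cdot)$ cancellation you invoke only for the diagonal $j=k$ term. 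You flag the need for iterated integration by parts, so this is a gap in exposition rather than in substance, but it is precisely the bookkeeping the paper's choice of path avoids. The concluding geometric identification of the angles via \eqref{angless} and \eqref{moduluscoef} matches the paper.
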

\begin{proof}
Let $\epsilon>0$. In view of \eqref{ssstructurelim} we first choose $x>0$ large enough such that
$$|T_*(x)-\Theta_k(A^+_{|\alpha_k|}|)\leq \frac\epsilon 3,$$
and that $\frac{C(1+|k|)}{x}\leq\frac\epsilon 3$, where $C$ is the coefficient in \eqref{convT}.  
Then we choose $n$ large enough such that $|x\sqrt{t_n}|<\frac 12$ and that $|T(t_n,k+x\sqrt{t_n})-T_*(x)|\leq\frac \epsilon 3$. The last fact is possible in view of Lemma \ref{lemmassstructure}. Finally, we have, in view of Lemma \ref{lemmasegm} and \eqref{convT}:
$$|T(0,k^+)-\Theta_k(A^+_{|\alpha_k|})|=|T(0,k+x\sqrt{t_n})-\Theta_k(A^+_{|\alpha_k|})|$$
$$\leq |T(0,k+x\sqrt{t_n})-T(t_n,k+x\sqrt{t_n})|+|T(t_n,k+x\sqrt{t_n})-T_*(x)|+|T_*(x)-\Theta_k(A^+_{|\alpha_k|})|\leq\epsilon,$$
so
$$T(0,k^+)=\Theta_k(A^+_{|\alpha_k|}).$$
Similarly we show by taking $x<0$ that
$$T(0,k^-)=\Theta_k(A^-_{|\alpha_k|}).$$
\end{proof}
The lemma insures us that $\chi(0)$ has corners at the same locations as $\chi_0$, and of same angles. To recover $\chi_0$ up to rotation and translations we need to recover also the torsion properties of $\chi_0$.

\subsection{Trace and properties of modulated normal vectors}\label{sectN}
In order to recover the torsion angles we shall need to get informations about $N(t,x)$ as $t$ goes to zero. 
For $x\notin\mathbb Z$ we denote the modulated normal vector
\begin{equation}\label{Ntilde}
\tilde N(t,x)=e^{i\Phi(t,x)}N(t,x),
\end{equation}
where
\begin{equation}\label{phase}
\Phi(t,x)=\sum_{j\neq x}|\alpha_j|^2\log\frac{|x-j|}{\sqrt t}.
\end{equation}

We start with a lemma insuring the existence of a limit for $\tilde N(t,x)$ at $t=0$, with a convergence decay of selfsimilar type  $\frac{\sqrt{t}}{d(x,\mathbb Z)}$ for $x$ close to $\mathbb Z$.

\begin{lemma}\label{lemmaconvN}
Let $0<t_1<t_2<1$. For $x\notin\frac 12\mathbb Z$ we have
\begin{equation}\label{convN}
|\tilde N(t_2,x)-\tilde N(t_1,x)|\leq C(1+|x|)\sqrt{t_2}\left(\frac 1{d(x,\frac 12\mathbb Z)}+\frac1{d(x,\mathbb Z)}\right),
\end{equation}
while if $x\in \frac 12\mathbb Z\backslash\mathbb Z$ then
\begin{equation}\label{convNZ}
|\tilde N(t_2,x)-\tilde N(t_1,x)|\leq C(1+|x|)\sqrt{t_2}.
\end{equation}
In particular for any $x\notin\mathbb Z$ there exists a trace for the modulated normal vector at $t=0$:
$$\exists\, \underset{t\rightarrow 0}{\lim}\,\tilde N(t,x)=:\tilde N(0,x).$$
Moreover for any $x\in\mathbb Z$ there exists a trace
\begin{equation}\label{03}\exists\, \underset{t\rightarrow 0}{\lim}\,e^{i\sum_{j\neq x}|\alpha_j|^2\log\frac{|x-j|}{\sqrt t}}N(t,x),
\end{equation} 
with a rate of convergence upper-bounded by $C(1+|x|)\sqrt{t}$.
\end{lemma}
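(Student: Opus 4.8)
The plan is to follow closely the proof of Lemma \ref{lemmaconvT}, writing the difference as a time integral $\tilde N(t_2,x)-\tilde N(t_1,x)=\int_{t_1}^{t_2}\partial_t\tilde N(t,x)\,dt$ and extracting decay in $t$ by repeated integration by parts against the oscillatory phases. First I would compute $\partial_t\tilde N$ from \eqref{Ntilde}, \eqref{phase} and \eqref{Nt}. Since $\Phi_t=-\frac{M}{2t}$, one finds
$$\partial_t\tilde N = i\Big(\frac{|u|^2}{2}-\frac Mt\Big)\tilde N - iu_x\, e^{i\Phi}T.$$
Inserting the ansatz \eqref{ansatzfinal} and using the conservation of mass \eqref{mass}, which forces the diagonal $r=\tilde r$ of $|u|^2$ to equal $\frac Mt$, the coefficient $\frac{|u|^2}{2}-\frac Mt$ splits into a singular resonant piece $-\frac M{2t}$ multiplying $\tilde N$ plus a genuinely oscillating off-diagonal sum carrying the phases $e^{i[(x-r)^2-(x-\tilde r)^2]/(4t)}$.

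Next I would treat the remaining pieces by integration by parts in time. The off-diagonal part of the $\tilde N$ term and the term $-iu_xe^{i\Phi}T$ (whose $k$-th summand carries the phase $e^{i(x-k)^2/(4t)}$ and the amplitude $\frac{x-k}{2t^{3/2}}$) are both non-stationary in $t$ for $x\notin\frac12\mathbb Z$; each integration by parts gains a power of $\sqrt t$ together with a small-divisor factor $\frac1{x-j}$, and a second integration by parts a factor $\frac1{(j-k)(j+k-2x)}$, exactly as for the terms $I_0,\dots,I_4$ and $I_{4,1}$ in the proof of \eqref{convT}. Summing the divisors with \eqref{young} and the elementary bound \eqref{estsum1} produces the factors $\frac1{d(x,\mathbb Z)}$ and $\frac1{d(x,\frac12\mathbb Z)}$ of \eqref{convN}, while the weight $(1+|x|)$ appears when $T_t$ or $N_t$ is differentiated and one meets $\sum_r|\alpha_r+R_r(t)|\,|x-r|\le C(1+|x|)\|\{\alpha_j\}+\{R_j\}\|_{l^{2,\frac32^+}}$, which is why the $l^{2,3}$ hypothesis of Theorem \ref{brokenline} is needed.

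The heart of the matter, and the main obstacle, is the resonant contributions, i.e. the terms of the form $\frac1t\cdot(\text{non-oscillating})\cdot\tilde N$, which must be shown to produce no accumulating logarithmic phase. Differentiating $T$ in the integration by parts of $-iu_xe^{i\Phi}T$ brings in $T_t=\Im(\overline{u_x}N)$, and the diagonal $r=k$ of $u_x\overline{u_x}$ yields precisely $+\frac{iM}{2t}\tilde N$; this is exactly the term cancelling the singular $-\frac M{2t}\tilde N$ found above. In other words, the modulation phase $\Phi$ is dictated by this resonance. The only remaining potentially resonant terms are those with $j+k=2x$, present solely when $x\in\frac12\mathbb Z$ and arising both from $|u|^2$ and from the quadratic $T_t$-interaction; these are eliminated by the symmetrization $r\leftrightarrow 2x-r$, in the same manner as the $j+k=2x$ terms were shown to have null contribution in the proof of \eqref{convT}. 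When $x\notin\frac12\mathbb Z$ no such resonance occurs and the sharp estimate \eqref{convN} follows, whereas for $x\in\frac12\mathbb Z\setminus\mathbb Z$ the divisor $j+k-2x$ can vanish, precluding the second integration by parts there and leaving only the weaker bound \eqref{convNZ}.

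Finally, for $x=k\in\mathbb Z$ the phase $\Phi$ is ill-defined because the $j=x$ summand in \eqref{phase} diverges, so I would work with the modulation of \eqref{03}, summed over $j\neq x$. Here the self-interaction summand $j=x$ in $-iu_xe^{i\Phi}T$ carries the factor $x-j=0$ and drops out, so the accumulating resonance is $\frac{i(M-|\alpha_x|^2)}{2t}$ instead of $\frac{iM}{2t}$; this is matched by $\partial_t\big(\sum_{j\neq x}|\alpha_j|^2\log\frac1{\sqrt t}\big)=-\frac{M-|\alpha_x|^2}{2t}$, again giving a convergent modulated vector, now with rate $C(1+|x|)\sqrt t$. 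In all three cases the resulting bounds show that $t\mapsto\tilde N(t,x)$, respectively the vector in \eqref{03}, is Cauchy as $t\to0$, which yields the claimed traces and completes the proof.
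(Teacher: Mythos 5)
Your proposal follows the paper's proof essentially step for step: the same time--integral representation of $\tilde N(t_2,x)-\tilde N(t_1,x)$, the same integrations by parts producing the divisors $\frac1{x-j}$ and $\frac1{(j-k)(j+k-2x)}$, and the same identification of the resonant diagonal of the $T_t$-interaction as the term cancelling the singular $-\frac{M}{2t}\tilde N$ left over from $\frac{|u|^2}{2}-\frac Mt$ (equivalently, dictating the choice of $\Phi$), including the correct modification $M\mapsto M-|\alpha_x|^2$ at integer $x$. The one place where your stated mechanism does not match what actually happens is the $j+k=2x$ resonances for $x\in\frac12\mathbb Z\setminus\mathbb Z$: the $\Im$-based symmetrization $r\leftrightarrow 2x-r$ that killed these terms in the proof of \eqref{convT} is not available here, because there the resonant sum multiplied the real vector $T$ inside an overall $\Im(\cdot)$, whereas here it multiplies the complex vector $N$; the restricted sum coming from $|u|^2$ alone is $i\cdot(\text{real})\cdot\frac{N}{2t}e^{i\Phi}$ and does not vanish under that symmetrization. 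What actually closes the argument is the pairing you already set up but did not exploit: for $j+k=2x$ the $T_t$-interaction contribution carries the extra factor $\frac{x-k}{x-j}=-1$ relative to the off-diagonal $|u|^2$ contribution, so the two non-oscillatory sums cancel term by term, which is exactly how the paper arrives at \eqref{convNZ}. With that correction your plan reproduces the paper's proof.
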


\begin{proof}
In view of \eqref{Nt} and \eqref{ansatzfinal} we have
$$\tilde N(t_2,x)-\tilde N(t_1,x)=\int_{t_1}^{t_2}\left(-iu_x\, T+i\left(\frac{|u|^2}{2}-\frac{M}{2t}\right) N+i\Phi_t N\right)e^{i\Phi}dt$$
$$=\int_{t_1}^{t_2}\left(-ie^{iM\log\sqrt{t}}\sum_je^{-i|\alpha_j|^2\log \sqrt{t}}(\alpha_j+R_j(t))\frac{e^{i\frac{(x-j)^2}{4t}}}{\sqrt{t}}i\frac{(x-j)}{2t}T(t,x)\right.$$
$$\left.+i\sum_{j\neq k}e^{-i(|\alpha_j|^2-|\alpha_k|^2)\log \sqrt{t}}(\alpha_j+R_j(t))(\overline{\alpha_k+R_k(t)})\frac{e^{i\frac{(x-j)^2-(x-k)^2}{4t}}}{2t}N(t,x)+i\Phi_t N(t,x)\,\right)e^{i\Phi}dt.$$
We can integrate by parts in the first term to get
$$\tilde N(t_2,x)-\tilde N(t_1,x)=$$
$$=\left[e^{iM\log\sqrt{t}}\sum_{j\neq x}e^{-i|\alpha_j|^2\log \sqrt{t}}(\alpha_j+R_j(t))\frac{e^{i\frac{(x-j)^2}{4t}}}{\sqrt{t}}(-\frac{4t^2}{i(x-j)^2})\frac{(x-j)}{2t}T(t,x)\right]_{t_1}^{t_2}$$
$$-2i\int_{t_1}^{t_2}e^{iM\log\sqrt{t}}\sum_{j\neq x} \frac{e^{i\frac{(x-j)^2}{4t}}}{x-j} (\sqrt{t}e^{-i|\alpha_j|^2\log \sqrt{t}}(\alpha_j+R_j(t))T(t,x)e^{i\Phi})_tdt$$
$$+i\int_{t_1}^{t_2}(\sum_{j\neq k}e^{-i(|\alpha_j|^2-|\alpha_k|^2)\log \sqrt{t}}(\alpha_j+R_j(t))(\overline{\alpha_k+R_k(t)})\frac{e^{i\frac{(x-j)^2-(x-k)^2}{4t}}}{2t}N(t,x)+\Phi_t N(t,x))e^{i\Phi}dt.$$
Having in mind the expression \eqref{Tt} for $T_t$ we obtain
$$\tilde N(t_2,x)-\tilde N(t_1,x)=O(\frac{\sqrt{t_2}}{d(x,\mathbb Z)})$$
$$-2i\int_{t_1}^{t_2}e^{iM\log\sqrt{t}}\sum_{j\neq x} \frac{e^{i\frac{(x-j)^2}{4t}}}{x-j} \sqrt{t}e^{-i|\alpha_j|^2\log \sqrt{t}}(\alpha_j+R_j(t))$$
$$\times \Im(e^{-iM\log\sqrt{t}}\sum_ke^{i|\alpha_k|^2\log \sqrt{t}}(\overline{\alpha_k+R_k(t)})\frac{e^{-i\frac{(x-k)^2}{4t}}}{\sqrt{t}}(-i\frac{x-k}{2t})N(t,x) e^{i\Phi}dt$$
$$+i\int_{t_1}^{t_2}(\sum_{j\neq k}e^{-i(|\alpha_j|^2-|\alpha_k|^2)\log \sqrt{t}}(\alpha_j+R_j(t))(\overline{\alpha_k+R_k(t)})\frac{e^{i\frac{(x-j)^2-(x-k)^2}{4t}}}{2t}N(t,x)+\Phi_t N(t,x))e^{i\Phi}dt.$$
The integrals are in $\frac 1t$. 
By writing $\Im(-iz)=-\frac{z+\overline z}2$ in the first integral, we have terms $e^{i\frac{(x-j)^2-(x-k)^2}{4t}}$ or $e^{i\frac{(x-j)^2+(x-k)^2}{4t}}$, both oscillant except for the first one, in case $j=k$ or $2x=j+k$. For $x\notin\frac 12\mathbb Z$ we perform integrations by parts in all terms, except in case $j=k$ for the first integral, that allow for a gain of $t^2$ minus at worse terms involving $N_t$ that are in $\frac 1{t\sqrt{t}}$:
$$\tilde N(t_2,x)-\tilde N(t_1,x)=O((1+|x|)\sqrt{t_2}(\frac{1}{d(x,\frac 12\mathbb Z)}+\frac{1}{d(x,\mathbb Z)}))$$
$$+i\int_{t_1}^{t_2}\sum_{j\neq x}  \frac{|\alpha_j+R_j(t)|^2}{2t}N e^{i\Phi}+\Phi_t N(t,x)e^{i\Phi}dt.$$
In view of the decay of $\{R_j(t)\}$ and the expression \eqref{phase} of the phase $\Phi$ we obtain \eqref{convN}. 

We are left with the case $x\in\frac 12\mathbb Z$. The computations goes as above, with some extra non-oscillant terms that actually calcel:
 $$\tilde N(t_2,x)-\tilde N(t_1,x)=O((1+|x|)\sqrt{t_2}$$
 $$+i\int_{t_1}^{t_2}\sum_{j\neq x,k;j+k=2x}e^{-i(|\alpha_j|^2-|\alpha_k|^2)\log \sqrt{t}}(\alpha_j+R_j(t))(\overline{\alpha_k+R_k(t)})\frac{x-k}{x-j}\frac {N(t,x)}{2t} e^{i\Phi}dt$$
 $$+i\int_{t_1}^{t_2}\sum_{j\neq k;j+k=2x}e^{-i(|\alpha_j|^2-|\alpha_k|^2)\log \sqrt{t}}(\alpha_j+R_j(t))(\overline{\alpha_k+R_k(t)})\frac{N(t,x)}{2t}e^{i\Phi}dt.$$
$$+i\int_{t_1}^{t_2}\sum_{j\neq x}  \frac{|\alpha_j+R_j(t)|^2}{2t}N e^{i\Phi}+\Phi_t N(t,x)e^{i\Phi}dt=O((1+|x|)\sqrt{t_2}.$$
\end{proof}

Next we shall prove that $\tilde N(0,x)$ is piecewise constant. 
\begin{lemma}\label{lemmasegmN}
Let $n\in\mathbb Z$ and $x_1,x_2\in (n,n+1)$. Then
$$\tilde N(0,x_1)=\tilde N(0,x_2).$$
Moreover, the same statement remains valid for $x_1,x_2\in(n-1,n+1)$ if $\alpha_n=0$.
\end{lemma}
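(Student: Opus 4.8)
The plan is to mimic the proof of Lemma \ref{lemmasegm}, now for the modulated normal vector and carrying along the extra phase $\Phi$. Since $\tilde N(0,x)$ exists by Lemma \ref{lemmaconvN}, it suffices to prove that $\tilde N(t,x_2)-\tilde N(t,x_1)=O(\sqrt t)$ as $t\to0$. Using \eqref{Nx} in the form $N_x=-uT$ and $\Phi_x=\sum_j\frac{|\alpha_j|^2}{x-j}$ (from \eqref{phase}), I would write
$$\tilde N(t,x_2)-\tilde N(t,x_1)=\int_{x_1}^{x_2}(e^{i\Phi}N)_x\,dx=\int_{x_1}^{x_2}e^{i\Phi}\Big(i\sum_j\frac{|\alpha_j|^2}{x-j}N-uT\Big)\,dx.$$
The first summand carries no factor $t^{-1/2}$ and does not decay on its own; the whole point of the modulation is that it must be cancelled, up to an $O(t^\gamma)$ error, by the one resonant term produced when the $uT$ summand is integrated by parts.

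For the $uT$ term I would integrate by parts once, exploiting $\partial_xe^{i\frac{(x-k)^2}{4t}}=\frac{i(x-k)}{2t}e^{i\frac{(x-k)^2}{4t}}$. Since $x_1,x_2\in(n,n+1)$ stay at a fixed positive distance from $\mathbb Z$, the boundary terms are $O(\sqrt t)$, and the pieces in which the derivative falls on $e^{i\Phi}$ or on $\frac{1}{x-k}$ are likewise $O(\sqrt t)$ after using $\sum_k|\alpha_k+R_k|<\infty$. The only borderline piece is the one in which the derivative hits $T$, since by \eqref{Tx} we have $T_x=\Re(\overline uN)=\frac12(\overline uN+u\overline N)$, which restores a factor $t^{-1/2}$. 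Expanding $\overline uN$ and $u\overline N$ gives double sums with phases $e^{i\frac{(x-k)^2-(x-j)^2}{4t}}$ and $e^{i\frac{(x-k)^2+(x-j)^2}{4t}}$. The $u\overline N$ phases are never stationary, and the off-diagonal terms $j\neq k$ of the $\overline uN$ phases have nonvanishing derivative $\frac{j-k}{2t}$; all of these are reduced to $O(\sqrt t)$ by a further integration by parts, exactly as in Lemma \ref{lemmasegm}.

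The surviving non-oscillatory contribution is the diagonal $j=k$ of the $\overline uN$ part, which after collecting constants equals $-i\sum_k|\alpha_k+R_k(t)|^2\int_{x_1}^{x_2}\frac{e^{i\Phi}N}{x-k}\,dx$. Adding it to the first summand $i\sum_j|\alpha_j|^2\int_{x_1}^{x_2}\frac{e^{i\Phi}N}{x-j}\,dx$ produces
$$-i\sum_j\big(|\alpha_j+R_j(t)|^2-|\alpha_j|^2\big)\int_{x_1}^{x_2}\frac{e^{i\Phi}N}{x-j}\,dx,$$
and since $\|\{R_j(t)\}\|_{l^{2,s}}=O(t^\gamma)$ with $\gamma>\frac12$ by \eqref{decayansatzcubic}, one has $\sum_j\big||\alpha_j+R_j|^2-|\alpha_j|^2\big|\le C(\|\{\alpha_j\}\|_{l^2}\|\{R_j\}\|_{l^2}+\|\{R_j\}\|_{l^2}^2)=O(t^\gamma)$, while the integrals stay uniformly bounded because $|x-j|$ is bounded below on the interval. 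Hence this remainder is $O(t^\gamma)$, and altogether $\tilde N(t,x_2)-\tilde N(t,x_1)=O(\sqrt t)$, which gives $\tilde N(0,x_1)=\tilde N(0,x_2)$. I expect this resonant matching to be the delicate step: the coefficient $|\alpha_j|^2$ in the phase $\Phi$ is exactly what is needed to annihilate the diagonal term, and the main effort is the careful bookkeeping of the triple sums generated when $N_x=-uT$ and $T_x=\Re(\overline uN)$ are inserted into the remaining oscillatory integrals, verifying that every such term is genuinely non-stationary and therefore $O(\sqrt t)$.

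For the last statement, when $\alpha_n=0$ there is no frequency located at $n$: the index $j=n$ is absent from $u$, and its contribution to the phase \eqref{phase} carries the coefficient $|\alpha_n|^2=0$, so $e^{i\Phi}$ and the entire integrand extend continuously across $x=n$. Consequently all the integrations by parts above may be carried out over the full interval $(n-1,n+1)$ with no boundary contribution arising at $x=n$, and the identical estimates yield $\tilde N(t,x_2)-\tilde N(t,x_1)=o(1)$ for $x_1,x_2\in(n-1,n+1)$, which completes the proof.
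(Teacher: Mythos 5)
Your strategy is the paper's: write $\tilde N(t,x_2)-\tilde N(t,x_1)=\int_{x_1}^{x_2}e^{i\Phi}\left(i\Phi_x N-uT\right)dx$, integrate the $uT$ part by parts in $x$, and observe that the diagonal $j=k$ of the double sum produced by $T_x=\Re(\overline u\,N)$ cancels the $\Phi_x N$ term up to the $O(t^{\gamma})$ error coming from $|\alpha_j+R_j|^2-|\alpha_j|^2$. That cancellation, and your bookkeeping of the boundary terms and of the terms where the derivative falls on $e^{i\Phi}$ or on $(x-k)^{-1}$, are correct and match the paper's argument.

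The gap is the sentence ``the $u\overline N$ phases are never stationary.'' The phase $\frac{(x-j)^2+(x-k)^2}{4t}$ has $x$-derivative $\frac{2x-j-k}{2t}$, which vanishes at $x=\frac{j+k}{2}$; when $j+k=2n+1$ this is the half-integer $n+\frac12$, which lies in $(x_1,x_2)$ for generic $x_1,x_2\in(n,n+1)$ (and both $n\pm\frac12$ lie in $(n-1,n+1)$ in the second case). A further integration by parts on these terms produces the factor $\frac{1}{2x-j-k}$, which is not integrable across the stationary point, so the step ``all of these are reduced to $O(\sqrt t)$ by a further integration by parts'' fails as stated. The contribution is still $O(\sqrt t)$, but it must be obtained by a stationary-phase argument rather than a plain non-resonance one: the paper splits $(x_1,x_2)$ into $(x_1,n+\frac12-\sqrt t)$, $(n+\frac12-\sqrt t,n+\frac12+\sqrt t)$ and $(n+\frac12+\sqrt t,x_2)$, bounds the integrand by a constant on the middle piece of length $2\sqrt t$, and integrates by parts only on the outer pieces, where the degenerating factor is at least $\sqrt t$. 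A second, smaller slip concerns the case $\alpha_n=0$: the coefficient of $e^{it\Delta}\delta_n$ in $u$ is $R_n(t)$, which need not vanish, so the index $j=n$ is not ``absent'' from $u$. That term cannot be integrated by parts, since $(x-n)^{-1}$ is singular inside $(n-1,n+1)$; it must instead be estimated directly, which gives $O(t^{\gamma-\frac12})=o(1)$ because $\gamma>\frac12$ --- sufficient for the conclusion, but it needs to be said.
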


\begin{proof}
From Lemma \ref{lemmaconvN} we have
\begin{equation}\label{convNdiff}\tilde N(0,x_1)-\tilde N(0,x_2)=\underset{t\rightarrow 0}{\lim} \,(\tilde N(t,x_1)-\tilde N(t,x_2)).
\end{equation}
In view of \eqref{Nx} we compute
$$\tilde N(t,x_1)-\tilde N(t,x_2)=\int_{x_1}^{x_2}(-uT(t,x)+i\Phi_x N(t,x))e^{i\Phi}\,dx$$
$$=\int_{x_1}^{x_2}(-e^{iM\log\sqrt{t}}\sum_je^{-i|\alpha_j|^2\log \sqrt{t}}(\alpha_j+R_j(t))\frac{e^{i\frac{(x-j)^2}{4t}}}{\sqrt{t}}\,T(t,x)+i\Phi_x N(t,x))e^{i\Phi}\,dx.$$
The integral is well defined, and in view of the decay of $\{R_j(t)\}$ we have 
$$\tilde N(t,x_1)-\tilde N(t,x_2)=O(\sqrt{t})+\int_{x_1}^{x_2}(-e^{iM\log\sqrt{t}}\sum_je^{-i|\alpha_j|^2\log \sqrt{t}}\alpha_j\frac{e^{i\frac{(x-j)^2}{4t}}}{\sqrt{t}}\,T(t,x)+i\Phi_x N(t,x))e^{i\Phi}\,dx.$$
If we are in the case $x_1,x_2\in(n-1,n+1)$ and $\alpha_n=0$, the phase $x-j$ can vanish on $(x_1,x_2)$ only for $j=n$ but in this case the whole term vanishes as $\alpha_n=0$. In the case $(x_1,x_2)\in (n,n+1)$ the phase $x-j\neq 0$ cannot vanish on $(x_1,x_2)$.  
Therefore to get decay in time we integrate by parts:
$$\tilde N(t,x_1)-\tilde N(t,x_2)=O(\sqrt{t})+\left[-e^{iM\log\sqrt{t}}\sum_je^{-i|\alpha_j|^2\log \sqrt{t}}\alpha_j\frac{e^{i\frac{(x-j)^2}{4t}}}{\sqrt{t}}\frac{4t}{2i(x-j)}T(t,x)e^{i\Phi}\right]_{t_1}^{t_2}$$
$$+\int_{x_1}^{x_2}e^{iM\log\sqrt{t}}\sum_je^{-i|\alpha_j|^2\log \sqrt{t}}\alpha_j\frac{2\sqrt{t}}{i}e^{i\frac{(x-j)^2}{4t}}(\frac{1}{x-j}T(t,x)e^{i\Phi})_x+i\Phi_x N(t,x)e^{i\Phi}\,dx.$$
In view of  formula \eqref{Tx} for the derivative $T_x$ and the expression \eqref{phase} of $\Phi(t,x)$ we get
$$\tilde N(t,x_1)-\tilde N(t,x_2)=O(\sqrt{t})$$
$$+i\int_{x_1}^{x_2}(-e^{iM\log\sqrt{t}}2\sum_je^{-i|\alpha_j|^2\log \sqrt{t}}\alpha_je^{i\frac{(x-j)^2}{4t}}\frac{1}{x-j}$$
$$\times \Re(e^{-iM\log\sqrt{t}}\sum_je^{i|\alpha_k|^2\log \sqrt{t}}\overline{\alpha_k}e^{-i\frac{(x-k)^2}{4t}}N(t,x))+\Phi_x N(t,x))e^{i\Phi}\,dx$$
$$=O(\sqrt{t})+i\int_{x_1}^{x_2}(-\sum_{j,k}e^{-i(|\alpha_j|^2-|\alpha_k|^2)\log \sqrt{t}}\alpha_j\overline{\alpha_k}e^{i\frac{(x-j)^2-(x-k)^2}{4t}}\frac{1}{x-j}N(t,x)+\Phi_x N(t,x))e^{i\Phi}\,dx$$
$$-i\int_{x_1}^{x_2}e^{2iM\log\sqrt{t}}\sum_{j,k}e^{-i(|\alpha_j|^2+|\alpha_k|^2)\log \sqrt{t}}\alpha_j\alpha_ke^{i\frac{(x-j)^2+(x-k)^2}{4t}}\frac{1}{x-j}\overline{N(t,x)}e^{i\Phi}\,dx.$$
In the first integral the terms with $j=k$ cancel with the ones from $\Phi_x$. In the second integral the phase $(x-j)^2+(x-k)^2$ does not vanish as $(x_1,x_2)$ does not contain integers, so we can integrate by parts, use the expression \eqref{Nx} for $N_x$ and gain a $\sqrt{t}$ decay in time. We are left with
$$\tilde N(t,x_1)-\tilde N(t,x_2)=O(\sqrt{t})$$
$$-i\int_{x_1}^{x_2}\sum_{j\neq k}e^{-i(|\alpha_j|^2-|\alpha_k|^2)\log \sqrt{t}}\alpha_j\overline{\alpha_k}e^{i\frac{(x-j)^2-(x-k)^2}{4t}}\frac{1}{x-j}N(t,x)e^{i\Phi}\,dx.$$
If $n\pm \frac 12\notin(x_1,x_2)$ the phase $(x-j)^2-(x-k)^2$ does not vanish, so again we can perform an integration by parts to get the decay in time. If $n+\frac 12\in(x_1,x_2)\subset(n,n+1)$ we split the integral into three pieces : $(x_1,n+\frac 12-\sqrt{t}), (n+\frac 12-\sqrt{t},n+\frac 12+\sqrt{t})$ and $(n+\frac 12+\sqrt{t},x_2)$. On the middle segment, of size $2\sqrt{t}$ we upper-bound the integrant by a constant. On the extremal segments we perform an integration by parts, that gives a power $\sqrt{t}$ as 
$$\frac1{|(x-j)^2-(x-k)^2|}\leq \frac C{d(2x,\mathbb Z)}.$$ 
The cases when $n\pm\frac 12\in (x_1,x_2)\subset(n-1,n+1)$ are treated similarly. 
Therefore 
$$\tilde N(t,x_1)-\tilde N(t,x_2)=O(\sqrt{t}),$$
and in view of \eqref{convNdiff} we get the conclusion of the Lemma. 
\end{proof}

We end this section with a lemma that gives a link between $\tilde N(0,k^\pm)$ and the rotation $\Theta_k$ from Lemma \ref{lemmassstructure}.
\begin{lemma}\label{NRk}
Let $t_n$ be a sequence of positive times converging to zero, such that
\begin{equation}\label{condtn}e^{i\sum_j|\alpha_j|^2\log(\sqrt{t_n})}=1.\end{equation}
Using the notations in Lemma \ref{lemmassstructure} we have the following relation:
$$\Theta_k(B^\pm_{|\alpha_k|})=e^{-i\sum_{j\neq k}|\alpha_j|^2\log|k-j|}\,e^{i Arg(\alpha_k)}\,\tilde N(0,k^\pm).$$
\end{lemma}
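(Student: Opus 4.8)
The plan is to evaluate the modulated normal vector $\tilde N$ along the self-similar path $x=k+\xi\sqrt t$, sending first $t\to 0$ along the sequence $t_n$ (with $\xi$ fixed) and only afterwards $\xi\to\pm\infty$; the case $\xi>0$ will produce the relation at $k^+$ and $\xi<0$ the one at $k^-$. The starting point is the phase computation, valid as $t\to 0$ with $\xi$ fixed,
$$\Phi(t,k+\xi\sqrt t)=|\alpha_k|^2\log|\xi|+\sum_{j\neq k}|\alpha_j|^2\log|k-j|-(M-|\alpha_k|^2)\log\sqrt t+o(1),$$
where I isolate the $j=k$ term (contributing $|\alpha_k|^2\log|\xi|$, since $|x-k|/\sqrt t=|\xi|$) and use that $\log|k-j+\xi\sqrt t|\to\log|k-j|$ for $j\neq k$, the error being $O\big(|\xi|\sqrt t\sum_{j\neq k}|\alpha_j|^2/|k-j|\big)\to0$ thanks to $\{\alpha_j\}\in l^{2,s}$. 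Here $M=\sum_j|\alpha_j|^2$, and both $\sum_{j\neq k}|\alpha_j|^2\log|k-j|$ and $\sum_{j\neq k}|\alpha_j|^2/|k-j|$ converge under the regularity of $\{\alpha_j\}$.

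Next I combine this with the defining limit of $N_*$ in Lemma \ref{lemmassstructure}, namely $N(t_n,k+\xi\sqrt{t_n})=e^{-i|\alpha_k|^2\log\sqrt{t_n}}(N_*(\xi)+o(1))$ as $n\to\infty$. Multiplying by $e^{i\Phi}$ and collecting the logarithmic phases, the entire dependence on $t_n$ reduces to the single factor $e^{-iM\log\sqrt{t_n}}=e^{-i\sum_j|\alpha_j|^2\log\sqrt{t_n}}$, which equals $1$ precisely by hypothesis \eqref{condtn}. Hence, for each fixed $\xi$,
$$\underset{n\to\infty}{\lim}\,\tilde N(t_n,k+\xi\sqrt{t_n})=e^{i|\alpha_k|^2\log|\xi|}\,e^{i\sum_{j\neq k}|\alpha_j|^2\log|k-j|}\,N_*(\xi).$$
On the other hand, Lemma \ref{lemmaconvN} controls the convergence of $\tilde N(t,x)$ to its trace, and along the path one has $d(x,\mathbb Z)=d(x,\tfrac12\mathbb Z)=|\xi|\sqrt{t_n}$ for $t_n$ small, so $|\tilde N(t_n,k+\xi\sqrt{t_n})-\tilde N(0,k+\xi\sqrt{t_n})|\leq C(1+|k|)/|\xi|$; since $\tilde N(0,\cdot)$ is constant on $(k,k+1)$ by Lemma \ref{lemmasegmN}, the second term is $\tilde N(0,k^+)$ when $\xi>0$. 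Letting $n\to\infty$ in this inequality and inserting the limit above yields
$$\left|e^{i|\alpha_k|^2\log|\xi|}\,e^{i\sum_{j\neq k}|\alpha_j|^2\log|k-j|}\,N_*(\xi)-\tilde N(0,k^+)\right|\leq\frac{C(1+|k|)}{|\xi|}.$$

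Finally I let $\xi\to+\infty$ and use the far-field expansion \eqref{ssstructurelim}, $e^{i|\alpha_k|^2\log|\xi|}e^{i\,Arg(\alpha_k)}N_*(\xi)=\Theta_k(B^+_{|\alpha_k|})+\mathcal O(1/|\xi|)$: the right-hand side above vanishes in the limit, giving $\tilde N(0,k^+)=e^{i\sum_{j\neq k}|\alpha_j|^2\log|k-j|}e^{-i\,Arg(\alpha_k)}\Theta_k(B^+_{|\alpha_k|})$, which rearranges into the claimed identity, and the choice $\xi<0$ produces the $k^-$ relation by the same computation. The main obstacle is the bookkeeping of the three logarithmic phases together with the interchange of the limits $n\to\infty$ and $\xi\to\pm\infty$: the self-similar scaling forces $d(x,\mathbb Z)\sim|\xi|\sqrt{t_n}$, so the bound from Lemma \ref{lemmaconvN} degrades as $\xi$ grows, and one must check that its surviving factor $(1+|k|)/|\xi|$ matches the far-field remainder of $N_*$ (both of order $1/|\xi|$) and that the phase error is $o(1)$ uniformly on each fixed $\xi$. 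Once this compatibility is secured, hypothesis \eqref{condtn} does the decisive work of eliminating the otherwise divergent global phase $e^{-iM\log\sqrt{t_n}}$.
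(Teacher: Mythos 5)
Your proof is correct and follows essentially the same route as the paper: both evaluate $\tilde N$ along the self-similar path $k+\xi\sqrt{t_n}$, use Lemma \ref{lemmassstructure} for the limit $N_*$, Lemma \ref{lemmaconvN} for the $\sqrt{t}/d(x,\mathbb Z)\sim 1/|\xi|$ convergence rate, Lemma \ref{lemmasegmN} to identify the trace with $\tilde N(0,k^\pm)$, the far-field asymptotics \eqref{ssstructurelim}, and hypothesis \eqref{condtn} to cancel the global phase $e^{-iM\log\sqrt{t_n}}$. The only difference is presentational: you take the limits $n\to\infty$ then $\xi\to\pm\infty$ successively with explicit error bookkeeping, whereas the paper runs the same estimates as a single $\epsilon/4$ argument choosing $x$ first and then $n$.
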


\begin{proof}
Let $\epsilon>0$. We choose $x>0$ large enough such that in view of \eqref{ssstructurelim}
\begin{equation}\label{est1}|e^{i|\alpha_k|^2\log|x|}\,e^{i Arg(\alpha_k)}N_*(x)-\Theta_k(B^+_{|\alpha_k|})|\leq \frac\epsilon 4,\end{equation}
and such that 
\begin{equation}\label{est2}\frac{C(1+|k|)}{x}\leq\frac\epsilon 4,\end{equation}
where $C$ is the coefficient in \eqref{convN}.  
Then we choose $n$ large enough such that $|x\sqrt{t_n}|<\frac 12$ and such that 
\begin{equation}\label{est3}|e^{-i\sum_{j\neq k} |\alpha_j|^2\log |x\sqrt{t_n}+k-j|}-e^{-i\sum_{j\neq k} |\alpha_j|^2\log |k-j|}\leq\frac \epsilon 4,\end{equation}
and
\begin{equation}\label{est4}|e^{i|\alpha_k|^2\log \sqrt{t_n}}N(t_n,k+x\sqrt{t_n})-N_*(x)|\leq\frac \epsilon 4.\end{equation}
The last fact is possible in view of Lemma \ref{lemmassstructure} and \eqref{condtn}. Therefore we have, in view of Lemma \ref{lemmasegmN} :
$$I:=|e^{-i\sum_{j\neq k}|\alpha_j|^2\log|k-j|}\,e^{i Arg(\alpha_k)}\,\tilde N(0,k^+)-\Theta_k(B^+_{|\alpha_k|})|$$
$$=|e^{-i\sum_{j\neq k}|\alpha_j|^2\log|k-j|}\,e^{i Arg(\alpha_k)}\,\tilde N(0,k+x\sqrt{t_n})-\Theta_k(B^+_{|\alpha_k|})|$$
$$\leq |\tilde N(0,k+x\sqrt{t_n})-\tilde N(t_n,k+x\sqrt{t_n})|$$
$$+|e^{-i\sum_{j\neq k}|\alpha_j|^2\log|k-j|}\,e^{i Arg(\alpha_k)}\,\tilde N(t_n,k+x\sqrt{t_n})-\Theta_k(B^+_{|\alpha_k|})|.$$
By using the convergence \eqref{convN} of Lemma \ref{lemmaconvN} together with \eqref{est2}, and the definition \eqref{Ntilde} of $\tilde N$ we get 
$$I\leq\frac{\epsilon}4+ |e^{-i\sum_{j\neq k}|\alpha_j|^2\log|k-j|}\,e^{i Arg(\alpha_k)}\,e^{i\sum_j|\alpha_j|^2\log\frac{|x\sqrt{t_n}+k-j|}{\sqrt{t_n}}}\,N(t_n,k+x\sqrt{t_n})-\Theta_k(B^+_{|\alpha_k|})|.$$
In view of \eqref{est3} and \eqref{condtn} we have
$$I\leq \frac{2\epsilon}4+ |e^{i Arg(\alpha_k)}\,e^{i|\alpha_k|^2\log|x|}e^{-i\sum_{j\neq k}|\alpha_j|^2\log(\sqrt{t_n})}\,N(t_n,k+x\sqrt{t_n})-\Theta_k(B^+_{|\alpha_k|})|$$
$$=\frac{\epsilon}2+ |e^{i Arg(\alpha_k)}\,e^{i|\alpha_k|^2\log|x|}e^{i|\alpha_k|^2\log(\sqrt{t_n})}\,N(t_n,k+x\sqrt{t_n})-\Theta_k(B^+_{|\alpha_k|})|.$$
Finally, by \eqref{est4}
$$I\leq \frac{3\epsilon}4+  |e^{i Arg(\alpha_k)}\,e^{i|\alpha_k|^2\log|x|}\,N^*(x)-\Theta_k(B^+_{|\alpha_k|})|,$$
and we conclude by \eqref{est1} that
$$I\leq\epsilon,\forall\epsilon >0,$$
thus
$$\Theta_k(B^+_{|\alpha_k|})=e^{-i\sum_{j\neq k}|\alpha_j|^2\log|k-j|}\,e^{i Arg(\alpha_k)}\,\tilde N(0,k^+).$$
For $x<0$ we argue similarly to get
$$\Theta_k(B^-_{|\alpha_k|})=e^{-i\sum_{j\neq k}|\alpha_j|^2\log|k-j|}\,e^{i Arg(\alpha_k)}\,\tilde N(0,k^-).$$

\end{proof}

\subsection{Recovering the torsion of the initial data}\label{secttorsion}
Recall that in \S \ref{sectdesign} we have denoted by $\{x_n, n\in L\}$ the ordered set of the integer corner locations of $\chi_0$ and by $\{\theta_n,\tau_n,\delta_n\}_{n\in L}$ the sequence determining the curvature and torsion angles of $\chi_0$. 
Lemma \ref{lemmaangles} insured us that $\chi(0)$ has corners at the same locations as $\chi_0$, and of same angles. Let us denote $\{\theta_n,\tilde\tau_n,\tilde \delta_n\}_{n\in L}$ the correspondent sequence of $\chi(0)$. To recover $\chi_0$ up to rotation and translations we need to recover also the torsion properties of $\chi_0$, i.e. $\tilde\tau_n=\tau_n$ and $\tilde\delta_n=\delta_n$.

In \S \ref{sectdesign} we have defined the torsion parameters in terms of the vectorial product of two consecutive tangent vectors, and in view of the way the tangent vectors of $\chi(0)$ are described in Lemma \ref{lemmaangles}, we are lead to investigate vectorial products of type $\Theta_k(A^-_{|\alpha_k|}\wedge A^+_{|\alpha_k|})$. We start with the following lemma.
\begin{lemma}\label{Avect}
For $a>0$ there exists a unique $\phi_a\in[0,2\pi)$ such that
$$\frac{A^-_a\wedge A^+_a }{|A^-_a\wedge A^+_a |}=\Re(e^{i\phi_a}B^+_a)=-\Re(e^{-i\phi_a}B^-_a).$$

\end{lemma}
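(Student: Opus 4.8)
The plan is to reduce the statement to elementary planar linear algebra in the $2$-plane orthogonal to $A^+_a$, and then to promote the resulting identity to the ``$-$'' side by exploiting the reflection symmetry $x\mapsto -x$ of the self-similar profile.

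First I would record that $w:=\frac{A^-_a\wedge A^+_a}{|A^-_a\wedge A^+_a|}$ is well defined: for $a>0$ the corner angle lies in $(0,\pi)$, so $A^+_a$ and $A^-_a$ are neither equal nor antipodal and $A^-_a\wedge A^+_a\neq0$, and by construction $w$ is a unit vector orthogonal to both $A^+_a$ and $A^-_a$. Next, from \eqref{asselfs} the pair $(\Re B^+_a,\Im B^+_a)$ is the limit as $x\to+\infty$ of the real and imaginary parts of $e^{ia^2\log|x|}N_a(1,x)$; since multiplication of $N_a(1,x)$ by the unit-modulus factor $e^{ia^2\log|x|}$ only rotates the orthonormal pair $(\Re N_a(1,x),\Im N_a(1,x))$ inside the plane $T_a(1,x)^\perp$, the limit $(\Re B^+_a,\Im B^+_a)$ is again an orthonormal pair, and it spans $(A^+_a)^\perp$ because $B^+_a\perp A^+_a$. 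Thus $(\Re B^+_a,\Im B^+_a)$ is an orthonormal basis of $(A^+_a)^\perp$, and likewise $(\Re B^-_a,\Im B^-_a)$ of $(A^-_a)^\perp$.

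For the first equality I would observe that $\phi\mapsto\Re(e^{i\phi}B^+_a)=\cos\phi\,\Re B^+_a-\sin\phi\,\Im B^+_a$ is a bijection from $[0,2\pi)$ onto the unit circle of $(A^+_a)^\perp$. Since $w$ is a unit vector of $(A^+_a)^\perp$, there is a unique $\phi_a\in[0,2\pi)$ with $w=\Re(e^{i\phi_a}B^+_a)$, which settles existence and uniqueness of $\phi_a$.

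The symmetric role of the two equalities I would obtain from the reflection symmetry of the profile. The filament function of the profile is $a\,e^{ix^2/4}$, which is even in $x$; feeding this into \eqref{systselfs} one checks that if $(T_a,N_a)$ solves the profile system then so does $(M\,T_a(1,-\cdot),-M\,N_a(1,-\cdot))$ for every fixed rotation $M$. Choosing $M$ to be the rotation by $\pi$ about $A^0:=T_a(1,0)$ makes the transformed data agree with $(T_a,N_a)$ at $x=0$, so by uniqueness $T_a(1,-x)=M\,T_a(1,x)$ and $N_a(1,-x)=-M\,N_a(1,x)$ for all $x$. Letting $x\to+\infty$ and using \eqref{asselfs} this yields $A^-_a=M A^+_a$ and $B^-_a=-M B^+_a$. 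Since $M^2=\mathrm{Id}$ and $M\in SO(3)$ preserves cross products, $M(A^-_a\wedge A^+_a)=A^+_a\wedge A^-_a$, hence $Mw=-w$. Applying $M$ to $w=\Re(e^{i\phi_a}B^+_a)$ and using $M B^+_a=-B^-_a$ gives $-w=Mw=\Re(e^{i\phi_a}M B^+_a)=-\Re(e^{i\phi_a}B^-_a)$, so $w$ is represented by the same angle $\phi_a$ in the ``$-$'' frame. Converting this into the exact normalization $w=-\Re(e^{-i\phi_a}B^-_a)$ of the statement is the one genuinely delicate point: it requires tracking the orientation of $(\Re B^-_a,\Im B^-_a)$ relative to $A^-_a$ together with the complex conjugation implicit in the definition of $B^-_a$ (equivalently, the sign convention in the modulation $e^{ia^2\log|x|}$), which is precisely where a careless sign would flip $e^{i\phi_a}$ into $e^{-i\phi_a}$ or change the leading sign. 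I expect this bookkeeping, rather than any analytic difficulty, to be the main obstacle; everything else is linear algebra in a fixed $2$-plane combined with the reflection symmetry of the profile.
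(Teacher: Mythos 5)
Your treatment of the first equality is fine and coincides with the paper's: $(\Re B^+_a,\Im B^+_a)$ is an orthonormal basis of $(A^+_a)^\perp$ and $w:=A^-_a\wedge A^+_a/|A^-_a\wedge A^+_a|$ is a unit vector of that plane, so a unique $\phi_a\in[0,2\pi)$ with $w=\Re(e^{i\phi_a}B^+_a)$ exists. Your route to the second equality, however, is genuinely different from the paper's and, as executed, does not reach the stated identity. The paper does not invoke an abstract reflection symmetry of the ODE: it imports from formula (11) of \cite{GRV} the explicit component-wise parities $A^+=(A_1,A_2,A_3)$, $A^-=(A_1,-A_2,-A_3)$, $B^+=(B_1,B_2,B_3)$, $B^-=(B_1,-B_2,-B_3)$, computes $w=\frac1{\sqrt{1-A_1^2}}(0,-A_3,A_2)$, reads off the scalar relation $w\cdot B^+=-w\cdot B^-$, and obtains the second equality by expanding $w$ in the orthonormal pair $(\Re B^-,\Im B^-)$.

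Your symmetry computation yields $A^-_a=MA^+_a$ and $B^-_a=-MB^+_a$, hence $Mw=-w$ and $w=\Re(e^{i\phi_a}B^-_a)$ --- with a plus sign and no conjugation of the phase. That is not the claimed $-\Re(e^{-i\phi_a}B^-_a)$; the two vectors differ for generic $\phi_a$, and your relation $B^-_a=-MB^+_a$ is in outright contradiction with the parity $B^-=(B_1,-B_2,-B_3)=MB^+$ on which the paper's proof rests, so the mismatch is not a harmless relabeling. You flag the reconciliation as ``bookkeeping'' and leave it undone, but that bookkeeping \emph{is} the content of the second equality: the leading sign and the phase convention are precisely what Lemma \ref{lemmatorsion} consumes to conclude $\cos\tilde\tau_n=\cos\tau_n$ and $\tilde\delta_n=\delta_n$, so an unresolved sign here propagates into the recovery of the torsion angles. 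To close the gap you would have to pin down the normalization of $B^-_a$ against the actual asymptotics in \cite{GRV} rather than against the condensed restatement \eqref{asselfs} that your ODE argument takes literally; as written, the second equality is unproved.
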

\begin{proof}
For simplicity we drop the subindex $a$. 
We recall from \eqref{asselfs} that the tangent vectors of the profile $\chi(1)$ have asymptotic directions the unitary vectors $A^\pm$ that can be described in view of formula (11) in \cite{GRV} as
$$A^+=(A_1,A_2,A_3),\quad A^-=(A_1,-A_2,-A_3).$$
This parity property for the tangent vector implies similar parity properties for normal and binormal vectors and from \eqref{asselfs} we also get
$$ B^+=(B_1,B_2,B_3),\quad B^-=(B_1,-B_2,-B_3).$$
In particular we have
$$ \frac{A^-\wedge A^+}{|A^-\wedge A^+ |}=\frac1{\sqrt{1-A_1^2}}(0,-A_3,A_2),$$
so
\begin{equation}\label{B+-} \frac{A^-\wedge A^+ }{|A^-\wedge A^+|}.B^+=\frac1{\sqrt{1-A_1^2}}(A_3B_2-A_2B_3)=-\frac{A^-\wedge A^+ }{|A^-\wedge A^+ |}.B^-.\end{equation}
Since $B^+\perp A^+$ and $\Re B^+,\Im B^+,A^+$ is an orthonormal basis of $\mathbb R^3$, we have a unique $\phi\in[0,2\pi)$ such that
$$\frac{A^-\wedge A^+}{|A^-\wedge A^+ |}=\cos\phi\,\Re B^++\sin\phi\,\Im B^+=\Re(e^{-i\phi}B^+),$$
thus the first inequality in the statement. The second inequality follows from \eqref{B+-}.
\end{proof}

We continue with some useful information on the connection between quantities involving normal components at two consecutive corners of $\chi(0)$. Recall that we have defined $\alpha_k=0$ if $k\notin\{x_n, n\in L\} $ and if $k=x_n$ for some $n\in L$ we have defined $\alpha_k\in\mathbb C$ by \eqref{moduluscoef} and \eqref{argcoef}. In particular two consecutive corners are located at $x_n$ and $x_{n+1}$, and the corresponding information is encoded by $\alpha_{x_n}$ and $\alpha_{x_{n+1}}$.  

\begin{lemma}\label{Rk} Let $t_n$ be a sequence of positive times converging to zero, such that the hypothesis \eqref{condtn} of Lemma \ref{NRk} holds. 
We have the following relation concerning two consecutive corners located at $x_n$ and $x_{n+1}$:
$$\Theta_{x_n}(B^+_{|\alpha_{x_n}|})=e^{i\beta_n}\,\,e^{i Arg(\alpha_{x_n})-Arg(\alpha_{x_{n+1}})}\Theta_{x_{n+1}}(B^-_{|\alpha_{x_{n+1}}|}),$$
where
$$\beta_n=(|\alpha_{x_n}|^2-|\alpha_{x_{n+1}}|^2)\log|x_n-x_{n+1}|.$$
\end{lemma}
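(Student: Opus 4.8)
The plan is to combine the piecewise constancy of the modulated normal vector with the self-similar trace identity of Lemma \ref{NRk}. Since $x_n$ and $x_{n+1}$ are consecutive corners, every integer strictly between them carries a vanishing coefficient, so iterating the second part of Lemma \ref{lemmasegmN} across these intermediate integers shows that $\tilde N(0,\cdot)$ is constant on the whole open interval $(x_n,x_{n+1})$. In particular its two relevant one-sided traces coincide,
\[
\tilde N(0,x_n^+)=\tilde N(0,x_{n+1}^-).
\]
This is the only geometric input needed; everything else is phase bookkeeping.

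I would then fix a sequence $t_n\to 0$ obeying the normalization \eqref{condtn} (legitimate since, by the footnote to Lemma \ref{lemmaangles}, the rotations $\Theta_k$ do not depend on the chosen sequence), and apply Lemma \ref{NRk} at $k=x_n$ with the $+$ trace and at $k=x_{n+1}$ with the $-$ trace. Solving each relation for the common vector above and equating the two expressions yields
\[
\Theta_{x_n}(B^+_{|\alpha_{x_n}|})=e^{i(Arg(\alpha_{x_n})-Arg(\alpha_{x_{n+1}}))}\,e^{i\Delta_n}\,\Theta_{x_{n+1}}(B^-_{|\alpha_{x_{n+1}}|}),
\]
where the accumulated phase
\[
\Delta_n=\sum_{j\neq x_{n+1}}|\alpha_j|^2\log|x_{n+1}-j|-\sum_{j\neq x_n}|\alpha_j|^2\log|x_n-j|
\]
is precisely the difference between the two frozen far-field phases carried by the factor $e^{-i\sum_{j\neq k}|\alpha_j|^2\log|k-j|}$ appearing in Lemma \ref{NRk}.

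It remains to identify $\Delta_n$ with $\beta_n$. Isolating in the first sum the index $j=x_n$ and in the second the index $j=x_{n+1}$, these two terms combine, via $\log|x_{n+1}-x_n|=\log|x_n-x_{n+1}|$, into exactly $(|\alpha_{x_n}|^2-|\alpha_{x_{n+1}}|^2)\log|x_n-x_{n+1}|=\beta_n$. I expect the main obstacle to be the treatment of the remaining indices $j\notin\{x_n,x_{n+1}\}$: since $\tilde N$ is built with the full phase \eqref{phase} of \emph{all} the Dirac masses, the contributions of the distant corners enter both traces, and the delicate point is to organize these remaining terms so that only $\beta_n$ survives. Carrying out this reduction—while keeping the orientation of the $\pm$ sides consistent with the definition \eqref{Ntilde} of $\tilde N$—completes the proof, and the symmetric argument with the roles of the two sides interchanged records the companion orientation.
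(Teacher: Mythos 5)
Your route is the paper's route: the paper's entire proof of this lemma is the single sentence that it is ``a simple consequence of Lemma \ref{NRk} and Lemma \ref{lemmasegmN}'', and your expansion of it --- iterate the second part of Lemma \ref{lemmasegmN} across the intermediate integers (where the coefficients vanish) to get $\tilde N(0,x_n^+)=\tilde N(0,x_{n+1}^-)$, then apply Lemma \ref{NRk} at $k=x_n$ with the $+$ trace and at $k=x_{n+1}$ with the $-$ trace and equate --- is exactly what that sentence means. Your identification of the $j=x_n$ and $j=x_{n+1}$ contributions of the two frozen phases with $\beta_n$ is also correct.

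But the step you defer as ``the main obstacle'' is a genuine gap, and it does not close in the way you hope. What your computation actually yields is the stated identity with the phase
\begin{equation*}
\Delta_n=\beta_n+\sum_{j\notin\{x_n,x_{n+1}\}}|\alpha_j|^2\log\frac{|x_{n+1}-j|}{|x_n-j|}
\end{equation*}
in place of $\beta_n$, and the residual sum is generically nonzero as soon as a third corner exists: with corners at $0$, $1$ and $3$, for instance, it contributes $|\alpha_3|^2\log\frac{2}{3}$ to $\Delta_0$ while $\beta_0=0$. Nothing in Lemma \ref{NRk} or Lemma \ref{lemmasegmN} makes the distant-corner terms cancel, so there is no ``organization'' of them that leaves only $\beta_n$; the honest output of this argument is the $\Delta_n$ identity. (The paper's one-line proof is silent on this point as well. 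The discrepancy is immaterial for Theorem \ref{brokenline}, since the arguments $Arg(\alpha_k)$ are defined in \eqref{argcoef} precisely to absorb whatever phase accumulates between consecutive corners, so one need only use the full $\Delta_n$ there. But your proof should either stop at the $\Delta_n$ identity or explicitly note that $\beta_n$ must be understood as $\Delta_n$; as written, the final reduction to $(|\alpha_{x_n}|^2-|\alpha_{x_{n+1}}|^2)\log|x_n-x_{n+1}|$ alone is asserted, not proved.)
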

\begin{proof}
The result is a simple consequence of Lemma \ref{NRk} and Lemma \ref{lemmasegmN}.
\end{proof}

Now we shall recover in the next lemma the modulus and the sign of the torsion angles of $\chi_0$. 
\begin{lemma}\label{lemmatorsion} 
The torsion angles of $\chi(0)$ and $\chi_0$ coincide:
$$\tilde\tau_n=\tau_n,\quad \tilde\delta_n=\delta_n,\quad \forall n\in L.$$
\end{lemma}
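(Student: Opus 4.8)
The plan is to compute the torsion data $(\tilde\tau_n,\tilde\delta_n)$ of $\chi(0)$ directly from the description of its tangent vectors given by Lemma \ref{lemmaangles}, and then to observe that the resulting trigonometric identities are exactly the defining relations \eqref{argcoef} for the arguments of the coefficients $\alpha_k$. First I would record the tangents of $\chi(0)$ on the segments adjacent to two consecutive corners $x_n,x_{n+1}$. By Lemma \ref{lemmaangles} the tangent of $\chi(0)$ on the segment $(x_n,x_{n+1})$ is $\tilde T_n=\Theta_{x_n}(A^+_{|\alpha_{x_n}|})=\Theta_{x_{n+1}}(A^-_{|\alpha_{x_{n+1}}|})$, while $\tilde T_{n-1}=\Theta_{x_n}(A^-_{|\alpha_{x_n}|})$ and $\tilde T_{n+1}=\Theta_{x_{n+1}}(A^+_{|\alpha_{x_{n+1}}|})$. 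Since each $\Theta_k$ is a rotation it preserves orientation and hence commutes with the cross product, so
$$\tilde T_{n-1}\wedge\tilde T_n=\Theta_{x_n}(A^-_{|\alpha_{x_n}|}\wedge A^+_{|\alpha_{x_n}|}),\qquad \tilde T_n\wedge\tilde T_{n+1}=\Theta_{x_{n+1}}(A^-_{|\alpha_{x_{n+1}}|}\wedge A^+_{|\alpha_{x_{n+1}}|}).$$

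Next I would normalize these using Lemma \ref{Avect} and the fact that $\Theta_k$ is real-linear (so it commutes with $\Re$ and with the action of a fixed $e^{i\phi}$ on real and imaginary parts). At the corner $x_n$ I use the $B^+$ representation and at $x_{n+1}$ the $B^-$ representation from Lemma \ref{Avect}, obtaining
$$\frac{\tilde T_{n-1}\wedge\tilde T_n}{|\tilde T_{n-1}\wedge\tilde T_n|}=\Re\bigl(e^{i\phi_{|\alpha_{x_n}|}}\,\Theta_{x_n}(B^+_{|\alpha_{x_n}|})\bigr),\qquad \frac{\tilde T_n\wedge\tilde T_{n+1}}{|\tilde T_n\wedge\tilde T_{n+1}|}=-\Re\bigl(e^{-i\phi_{|\alpha_{x_{n+1}}|}}\,\Theta_{x_{n+1}}(B^-_{|\alpha_{x_{n+1}}|})\bigr).$$
The crucial reduction is to route both of these through a single complex frame vector $\mathcal B:=\Theta_{x_n}(B^+_{|\alpha_{x_n}|})$, whose real and imaginary parts form an orthonormal pair of $\mathbb S^2$ by \eqref{ssstructure} and \eqref{asselfs}. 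Lemma \ref{Rk} supplies precisely the bridge between the two corners, $\Theta_{x_{n+1}}(B^-_{|\alpha_{x_{n+1}}|})=e^{-i\beta_n}e^{-i(Arg(\alpha_{x_n})-Arg(\alpha_{x_{n+1}}))}\mathcal B$, so both normalized cross products take the form $\pm\Re(e^{i\theta}\mathcal B)$ for explicit phases whose difference is $\theta_1-\theta_2=\phi_{|\alpha_{x_n}|}-\phi_{|\alpha_{x_{n+1}}|}+\beta_n+Arg(\alpha_{x_n})-Arg(\alpha_{x_{n+1}})$, exactly the argument prescribed in \eqref{argcoef}.

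Now I would invoke the elementary identities valid for a complex vector $\mathcal B$ with $\Re\mathcal B,\Im\mathcal B$ orthonormal, namely $\Re(e^{i\theta_1}\mathcal B)\cdot\Re(e^{i\theta_2}\mathcal B)=\cos(\theta_1-\theta_2)$ and $\Re(e^{i\theta_1}\mathcal B)\wedge\Re(e^{i\theta_2}\mathcal B)=\sin(\theta_1-\theta_2)\,\Re\mathcal B\wedge\Im\mathcal B$. Substituting into the definition \eqref{deftorsionmod} of the torsion angle gives $\cos\tilde\tau_n=-\cos(\theta_1-\theta_2)$, and pairing the cross product of the two normalized vectors with $\tilde T_n=\pm\,\Re\mathcal B\wedge\Im\mathcal B$ gives the orientation sign $\tilde\delta_n=-\,sgn(\sin(\theta_1-\theta_2))$. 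These are term-for-term the right-hand sides of \eqref{argcoef} defining $\cos\tau_n$ and $\delta_n$; since $\tau_n,\tilde\tau_n\in[0,\pi]$ are determined by their cosine and $\delta_n,\tilde\delta_n$ by the sign of the sine, I conclude $\tilde\tau_n=\tau_n$ and $\tilde\delta_n=\delta_n$ for every $n\in L$.

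The main obstacle is the careful simultaneous bookkeeping of phases and orientation. One must use Lemma \ref{Avect} with the correct representation ($B^+$ at $x_n$, $B^-$ at $x_{n+1}$) so that, after inserting the bridge from Lemma \ref{Rk}, the accumulated phase collapses exactly onto the combination appearing in \eqref{argcoef} rather than a spurious variant; and one must use that $(A^\pm,\Re B^\pm,\Im B^\pm)$ is a positively oriented frame preserved by the rotation $\Theta_k$ in order to fix the sign of $\Re\mathcal B\wedge\Im\mathcal B$ relative to $\tilde T_n$, which is what pins down $\tilde\delta_n$. Everything else is the routine trigonometry of an orthonormal pair rotated by two phases.
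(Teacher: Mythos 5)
Your proposal is correct and follows essentially the same route as the paper: it invokes Lemma \ref{lemmaangles} for the tangents, Lemma \ref{Avect} for the $B^{\pm}$ representations of the normalized cross products, Lemma \ref{Rk} as the phase bridge between consecutive corners, and then reduces to the trigonometry of an orthonormal complex frame vector to match the cosine and sign conditions in \eqref{argcoef}. The only cosmetic difference is that you route both cross products through $\Theta_{x_n}(B^+_{|\alpha_{x_n}|})$ while the paper expresses everything in terms of $\Theta_{x_{n+1}}(B^-_{|\alpha_{x_{n+1}}|})$, which is the same computation read from the other end of the bridge.
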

\begin{proof}Let $n\in L$ with $n+1\in L$ and $n\geq 0$ (the proof for $n\geq 0$ is similar).
From the definition \eqref{deftorsionmod} we have
$$\cos(\tilde\tau_{n+1})=\frac{T(0,x_{n}^-)\wedge T(0,x_{n}^+)}{|T(0,x_{n}^-)\wedge T(0,x_{n}^+)|}.\frac{T(0,x_{n+1}^-)\wedge T(0,x_{n+1}^+)}{|T(0,x_{n+1}^-)\wedge T(0,x_{n+1}^+)|}.$$
Now we use Lemma \ref{lemmaangles}:
$$\cos(\tilde\tau_{n+1})=\Theta_{x_n}\left(\frac{A^-_{|\alpha_{x_n}|}\wedge A^+_{|\alpha_{x_n}|}}{|A^-_{|\alpha_{x_n}|}\wedge A^+_{|\alpha_{x_n}|}|}\right).\Theta_{x_{n+1}}\left(\frac{A^-_{|\alpha_{x_{n+1}}|}\wedge A^+_{|\alpha_{x_{n+1}}|}}{|A^-_{|\alpha_{x_{n+1}}|}\wedge A^+_{|\alpha_{x_{n+1}}|}|}\right).$$
By using Lemma \ref{Avect} we write
$$\cos(\tilde\tau_{n+1})=\Theta_{x_n}\left(\Re(e^{i\phi_{|\alpha_{x_n}|}}B^+_{|\alpha_{x_n}|})\right).\Theta_{x_{n+1}}\left(-\Re(e^{i\phi_{|\alpha_{x_{n+1}}|}}B^-_{|\alpha_{x_{n+1}}|})\right)$$
$$=-\Re\left(\Theta_{x_n}(e^{i\phi_{|\alpha_{x_n}|}}B^+_{|\alpha_{x_n}|})\right).\Re\left(\Theta_{x_{n+1}}(e^{i\phi_{|\alpha_{x_{n+1}}|}}B^-_{|\alpha_{x_{n+1}}|})\right).$$
Finally, by Lemma \ref{Rk} we get
$$\cos(\tilde\tau_{n+1})=-\Re\left(e^{i\phi_{|\alpha_{x_n}|}+i\beta_n+i(Arg(\alpha_{x_n})-Arg(\alpha_{x_{n+1}}))}\Theta_{x_{n+1}}(B^-_{|\alpha_{x_{n+1}}|})\right)$$
$$.\Re\left(\Theta_{x_{n+1}}(e^{i\phi_{|\alpha_{x_{n+1}}|}}B^-_{|\alpha_{x_{n+1}}|})\right).$$
Since $\Re B^-_{|\alpha_{x_{n+1}}|}$ and $\Im B^-_{|\alpha_{x_{n+1}}|}$ are unitary orthogonal vectors, we obtain
$$\cos(\tilde\tau_{n+1})=-\cos(\phi_{|\alpha_{x_n}|}+\beta_n+Arg(\alpha_{x_n})-Arg(\alpha_{x_{n+1}})-\phi_{|\alpha_{x_{n+1}}|}).$$
Therefore, by definition \eqref{argcoef} of $\{Arg(\alpha_j)\}$ we get
$$\cos(\tilde\tau_{n+1})=\cos(\tau_{n+1}),$$
and in particular $\tilde\tau_{n+1}=\tau_{n+1}$.

Similarly, we compute
$$\frac{T(0,x_{n}^-)\wedge T(0,x_{n}^+)}{|T(0,x_{n}^-)\wedge T(0,x_{n}^+)|}\wedge\frac{T(0,x_{n+1}^-)\wedge T(0,x_{n+1}^+)}{|T(0,x_{n+1}^-)\wedge T(0,x_{n+1}^+)|}$$
$$=-\Theta_{x_{n+1}}(\Re B^-_{|\alpha_{x_{n+1}}|})\wedge \Theta_{x_{n+1}}(\Im B^-_{|\alpha_{x_{n+1}}|})\sin(\phi_{|\alpha_{x_n}|}+\beta_n+Arg(\alpha_{x_n})-Arg(\alpha_{x_{n+1}})-\phi_{|\alpha_{x_{n+1}}|}).$$
As $\Re(B^-_{|\alpha_{x_{n+1}}|})\wedge\Im(B^-_{|\alpha_{x_{n+1}}|})=A^-_{|\alpha_{x_{n+1}}|}$, in view of Lemma \ref{lemmaangles} we get
$$\frac{T(0,x_{n}^-)\wedge T(0,x_{n}^+)}{|T(0,x_{n}^-)\wedge T(0,x_{n}^+)|}\wedge\frac{T(0,x_{n+1}^-)\wedge T(0,x_{n+1}^+)}{|T(0,x_{n+1}^-)\wedge T(0,x_{n+1}^+)|}$$
$$=-T(0,x_{n+1}^-)\sin(\phi_{|\alpha_{x_n}|}+\beta_n+Arg(\alpha_{x_n})-Arg(\alpha_{x_{n+1}})-\phi_{|\alpha_{x_{n+1}}|}),$$
so by definition \eqref{argcoef} of $\{Arg(\alpha_j)\}$  we conclude $\tilde\delta_{n+1}=\delta_{n+1}$.
\end{proof}

\subsection{End of the existence result proof}\label{sectendproof} From Lemma \ref{lemmaangles} and Lemma \ref{lemmatorsion} we conclude that $\chi(0)$ and $\chi_0$ have the same characterizing sequences $\{\theta_n,\tau_n,\delta_n\}_{n\in L}$. In view of the definition of this sequence in \S\ref{sectdesign} we conclude that $\chi(0)$ and $\chi_0$ coincide modulo a rotation and a translation. This rotation and translation can be removed by changing the initial point $P$ and frame $(v_1,v_2,v_3)$ used in the construction of $\chi(t)$ in \S\ref{sectconstr}. Therefore we have constructed the curve evolution in Theorem \ref{brokenline} for positive times. 
The extension in time is done by using the time reversibility of the Schr\"odinger equation and the one of the binormal flow, that means solving  for positive times the binormal flow with initial data $\chi(-s)$, which is still a polygonal line satisfying the hypothesis.

\subsection{Further properties of the constructed solution}\label{sectproperties}  In this last subsection we describe the trajectories in time of the $\mathbb R^3-$locations of the corners, $\chi(t,k)$. 

\begin{lemma}
Let $k$ such that $\alpha_k\neq 0$, that is a location of corner for $\chi_0$. Then there exists two orthogonal vectors $v_1,v_2\in\mathbb S^2$ such that
$$\chi(t,k)=\chi(0,k)+\sqrt{t}\,(v_1+iv_2)+O(t).$$
\end{lemma}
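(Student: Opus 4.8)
The plan is to integrate the corner velocity \eqref{chit} in time and isolate the leading self-similar oscillation. Starting from
$$\chi(t,k)=\chi(0,k)+\int_0^t\Im\bigl(\overline{u(\tau,k)}\,N(\tau,k)\bigr)\,d\tau,$$
I would evaluate the filament function \eqref{ansatzfinal} at the integer $x=k$. The diagonal index $j=k$ gives the non-oscillatory singular contribution $\frac{1}{\sqrt\tau}e^{-i|\alpha_k|^2\log\sqrt\tau}(\alpha_k+R_k(\tau))$, while every $j\neq k$ carries the rapidly oscillating phase $e^{i\frac{(k-j)^2}{4\tau}}$. For the normal vector I would use the trace \eqref{03}: setting $S_k:=\sum_{j\neq k}|\alpha_j|^2\log|k-j|$ one has
$$N(\tau,k)=e^{-iS_k}\,e^{i(M-|\alpha_k|^2)\log\sqrt\tau}\,\bigl(\tilde N_k(0)+O(\sqrt\tau)\bigr),$$
where $\tilde N_k(0)$ denotes the limit in \eqref{03} and the $O(\sqrt\tau)$ is the convergence rate of Lemma \ref{lemmaconvN}, with constant $C(1+|k|)$.

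The key algebraic observation is the phase cancellation $|\alpha_k|^2+(M-|\alpha_k|^2)=M$: multiplying the conjugate of the diagonal term of $u(\tau,k)$ by $N(\tau,k)$, the two logarithmic phases combine into a single $e^{iM\log\sqrt\tau}$, so the leading part of the integrand becomes
$$\frac{1}{\sqrt\tau}\,\Im\bigl(e^{iM\log\sqrt\tau}\,W\bigr),\qquad W:=\overline{\alpha_k}\,e^{-iS_k}\,\tilde N_k(0).$$
Because $N=e_1+ie_2$ has orthonormal real and imaginary parts and the modulation in \eqref{Ntilde} has modulus one, the limit $\tilde N_k(0)$ again has orthonormal real and imaginary parts; hence $W=W_R+iW_I$ satisfies $W_R\perp W_I$ and $|W_R|=|W_I|=|\alpha_k|$.

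Next I would integrate explicitly. From $\frac{d}{d\tau}\bigl(\sqrt\tau\sin(M\log\sqrt\tau)\bigr)=\frac{1}{2\sqrt\tau}\bigl(\sin(M\log\sqrt\tau)+M\cos(M\log\sqrt\tau)\bigr)$ and the analogous identity for the cosine, the antiderivative of $\frac{1}{\sqrt\tau}\Im(e^{iM\log\sqrt\tau}W)$ is $\sqrt\tau\,(v_1\sin(M\log\sqrt\tau)+v_2\cos(M\log\sqrt\tau))$ with $v_1,v_2$ explicit real linear combinations of $W_R,W_I$, and the contribution of the lower endpoint $\tau=0$ vanishes since $\sqrt\tau\to0$. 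A direct computation using $W_R\perp W_I$ and $|W_R|=|W_I|$ gives $v_1\perp v_2$ and $|v_1|=|v_2|=\frac{2|\alpha_k|}{\sqrt{1+M^2}}$, so after rescaling by this common factor $v_1,v_2\in\mathbb S^2$, which is exactly the stated logarithmic spiral.

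It then remains to absorb all other contributions into $O(t)$. The $R_k(\tau)$ part of the diagonal term is $O(\tau^{\gamma-1/2})$ by \eqref{decayansatzcubic}, integrating to $O(t^{\gamma+1/2})=O(t)$ since $\gamma>\frac12$; the $O(\sqrt\tau)$ error in the trace of $N$ gives a bounded integrand and thus $O(t)$. The hard part will be the off-diagonal, oscillatory part of $u(\tau,k)$: there $\int_0^t\frac{e^{i(k-j)^2/4\tau}}{\sqrt\tau}\,d\tau$ must be shown to be $O(t^{3/2})$ by integration by parts in $\tau$ (the phase derivative is of size $\tau^{-2}$), summably in $j$ thanks to $\{\alpha_j\}\in l^1$ and the gain $|k-j|^{-2}$, while the $N_t$ terms produced by the integration by parts carry a $\tau^{-1}$ singularity and must be controlled exactly as in the proofs of Lemma \ref{lemmaconvT} and Lemma \ref{lemmaconvN}. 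These are routine repetitions of the earlier estimates and yield the $O(t)$ remainder, completing the proof.
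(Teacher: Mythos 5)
Your proposal is correct and follows essentially the same route as the paper: isolate the diagonal $j=k$ term of the ansatz, use the trace \eqref{03} of the modulated normal vector with its $C(1+|k|)\sqrt{t}$ convergence rate so the logarithmic phases combine into $e^{iM\log\sqrt{\tau}}$, integrate that explicitly to produce the spiral, and kill the off-diagonal terms by integration by parts in $\tau$ with the $N_t$ contribution bounded as in Lemmas \ref{lemmaconvT}--\ref{lemmaconvN} (note only that $N_t$ is of size $\tau^{-3/2}$ rather than $\tau^{-1}$, which still yields $O(t)$ after the $\tau^{3/2}$ gain). Your explicit computation of the orthogonality and common length $2|\alpha_k|/\sqrt{1+M^2}$ of $v_1,v_2$ is in fact slightly more detailed than the paper's, which stops at the integral formula.
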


\begin{proof}
From \eqref{chit} and the decay of $\{R_j(\tau)\}$ we have
$$\chi(t,k)-\chi(0,k)=\int_0^t\Im(\overline{u}N(\tau,k))\,d\tau$$
$$=\Im\int_{0}^{t}e^{-iM\log\sqrt{\tau}}\sum_je^{i|\alpha_j|^2\log \sqrt{\tau}}(\overline{\alpha_j+R_j(\tau)})\frac{e^{-i\frac{(k-j)^2}{4\tau}}}{\sqrt{\tau}}\,N(\tau,k)\,d\tau$$
$$=\Im\int_{0}^{t}e^{-iM\log\sqrt{\tau}}\sum_je^{i|\alpha_j|^2\log \sqrt{\tau}}\overline{\alpha_j}\frac{e^{-i\frac{(k-j)^2}{4\tau}}}{\sqrt{\tau}}\,N(\tau,k)\,d\tau+O(t^{\frac 12+\gamma}).$$
In the terms with $j\neq k$ we perform an integration by parts to get decay in time
$$\chi(t,k)-\chi(0,k)=\Im \int_{0}^{t}e^{-iM\log\sqrt{\tau}}e^{i|\alpha_k|^2\log \sqrt{\tau}}\overline{\alpha_k}\,N(\tau,k)\,\frac{d\tau}{\sqrt{\tau}}+O(t^{\frac 12+\gamma})$$
$$+\left[\Im e^{-iM\log\sqrt{\tau}}\sum_{j\neq k} e^{i|\alpha_j|^2\log \sqrt{\tau}}\overline{\alpha_j}\frac{e^{-i\frac{(k-j)^2}{4\tau}}}{\sqrt{\tau}} \frac{4\tau^2}{i(k-j)^2}\,N(\tau,k) \right]_0^t$$
$$-\Im \int_{0}^{t} e^{-iM\log\sqrt{\tau}}\sum_{j\neq k} 4\overline{\alpha_j}\frac{e^{-i\frac{(k-j)^2}{4\tau}}}{ i(k-j)^2}(e^{i|\alpha_j|^2\log \sqrt{\tau}}\tau\sqrt{\tau}\,N(\tau,k))_\tau \,d\tau.$$
The boundary term is of order $O(t\sqrt{t})$. In view of \eqref{Nt} we get a $\frac 1{\tau\sqrt{\tau}}$ estimate for $N_\tau$ so the last term is of order $O(t)$, and we have
$$\chi(t,k)-\chi(0,k)=\Im \int_{0}^{t}e^{-iM\log\sqrt{\tau}}e^{i|\alpha_k|^2\log \sqrt{\tau}}\overline{\alpha_k}\,N(\tau,k)\,\frac{d\tau}{\sqrt{\tau}}+O(t).$$
Now, from \eqref{03} in Lemma \ref{lemmaconvN} we get the existence of $w_1,w_2\in\mathbb S^2$ such that 
$$w_1+iw_2=\underset{t\rightarrow 0}{\lim}\,e^{-i\sum_{j\neq k}|\alpha_j|^2\log\sqrt t}N(t,k),$$
with a rate of convergence upper-bounded by $C(1+|k|)\sqrt{t}$. This implies
$$\chi(t,k)-\chi(0,k)=\Im\, \overline{\alpha_k}\,(w_1+iw_2)\int_{0}^{t}e^{-iM\log\sqrt{\tau}}e^{i\sum_j|\alpha_j|^2\log \sqrt{\tau}}\,\frac{d\tau}{\sqrt{\tau}}+O(t)$$
$$=\Im\, \overline{\alpha_k}\,(w_1+iw_2)\int_{0}^{t}\,\frac{d\tau}{\sqrt{\tau}}+O(t)$$
and thus get the conclusion of the Lemma.

\end{proof}

\end{document}